\documentclass[10pt]{article}
\usepackage{amsmath, amssymb, amsfonts, amsthm, epsfig, float, fdsymbol, graphicx, sectsty}
\usepackage[all]{xy}
\title{Groups with arbitrarily poor permutation stability}
\author{Henry Bradford}

\newtheorem{thm}{Theorem}[section]
\newtheorem{lem}[thm]{Lemma}
\newtheorem{propn}[thm]{Proposition}
\newtheorem{coroll}[thm]{Corollary}
\newtheorem{defn}[thm]{Definition}
\newtheorem{ex}[thm]{Example}
\newtheorem{notn}[thm]{Notation}

\newtheorem{rmrk}[thm]{Remark}
\newtheorem{qu}[thm]{Question}

\DeclareMathOperator{\Alt}{Alt}

\DeclareMathOperator{\GL}{GL}
\DeclareMathOperator{\id}{id}

\DeclareMathOperator{\IRS}{IRS}
\DeclareMathOperator{\Prob}{Prob}

\DeclareMathOperator{\SR}{SR}

\DeclareMathOperator{\Sub}{Sub}
\DeclareMathOperator{\supp}{supp}

\DeclareMathOperator{\Sym}{Sym}

\begin{document}

\maketitle

\begin{abstract}
We propose a quantitative notion of permutation stability for finitely generated groups. Our notion is related to, but distinct from, the ``stability rate'' introduced by Becker and Mosheiff (which is valid within the class of finitely presented groups). 
We construct a family of finitely generated stable groups which exhibit, quantitatively, arbitrarily ``bad'' permutation stability. This means that any application of a ``sample-and-substitute'' algorithm will be very slow in ascertaining whether a given tuple of 
permutations satisfy the defining relations of our groups. 
\end{abstract}

\section{Introduction}


A group $\Gamma$ is \emph{permutation stable} (henceforth simply \emph{stable}) 
if every almost-action of $\Gamma$ on a finite set is a small perturbation 
of an honest action. 
In this paper we study a quantitative notion of stability. 
Intuitively, the group $\Gamma$ is ``very stable'' if an almost-action which is more than a tiny perturbation of an action 
must fail the defining properties of a group action at very many points. 
Conversely, the stability is ``poor'' if there are almost-actions on finite sets 
which come very close to satisfying the defining properties of an action 
but must be perturbed a great deal to yield a genuine action. 
The main result of this paper is that the class of finitely generated 
stable groups contains groups whose stability is arbitrarily poor.

\subsection{Statement of results} \label{StateResultsSubsect}

Throughout, $\Gamma$ is a finitely generated group; 
$\mathbb{F}$ is a finite-rank free group on basis $X$, 
and $\pi : \mathbb{F} \twoheadrightarrow \Gamma$ is a fixed epimorphism. 
For a finite set $\Omega$, let $d_{\Omega}$ denote 
the \emph{normalised Hamming metric} on the symmetric group $\Sym(\Omega)$, 
given by: 
$$d_{\Omega}(\sigma,\tau) = \frac{1}{\lvert \Omega \rvert} \lvert \lbrace \omega \in \Omega : \sigma(\omega) \neq \tau(\omega) \rbrace \rvert.$$

\begin{defn} \label{StableDefn}
We say that $\Gamma$ is stable if, for every $\epsilon > 0$, 
there exists $\delta \in (0,1]$ and a finite set $E \subseteq \ker (\pi)$ such that, 
for any finite set $\Omega$ and any homomorphism 
$\rho : \mathbb{F} \rightarrow \Sym(\Omega)$, if: 
\begin{equation} \label{TestIneq}
d_{\Omega} (\rho(r),\id_{\Omega}) < \delta
\end{equation}
for all $r \in E$, 
then there is a homomorphism $\phi : \Gamma \rightarrow \Sym(\Omega)$ 
satisfying $d_{\Omega} (\rho(x),(\phi \circ \pi)(x))<\epsilon$ 
for all $x \in X$ 
(we say in this case that $\rho$ and $\phi \circ \pi$ are \emph{$\epsilon$-close}). 
\end{defn}

To motivate our formulation of quantitative stability, 
suppose that we know $\Gamma$ is stable; that we have an action of 
$\mathbb{F}$ on a finite set $\Omega$ (via $\rho$), 
and that we would like to test whether $\rho$ is close to a homomorphism 
that factors through $\pi$. 
We may use stability to check this, as follows. 
Given $\epsilon > 0$ and corresponding $(\delta,E)$ 
as in Definition \ref{StableDefn} above, 
if $\rho$ is \emph{not} $\epsilon$-close to 
an action factoring through $\pi$, then for some $r \in E$, 
(\ref{TestIneq}) fails. 
For each $r \in E$, pick $\omega_1 , \ldots , \omega_n \in \Omega$ uniformly at random, with replacement, 
and compute for each $i$ whether or not $\rho(r)[\omega_i] = \omega_i$. 
If we have equality for all $i$, then we say that ``$r$ passes the test''. 
If (\ref{TestIneq}) fails then the probability of 
$r$ passing the test is at most $(1-\delta)^n$. 
To ensure that with high (say $99$ percent) probability, 
we do not allow any $r$ for which (\ref{TestIneq}) fails 
to pass the test, we should take $n$ to be at least proportional to $1/\delta$. 
If all $r \in E$ pass the test, then 
we can conclude with high probability that $\rho$ is 
$\epsilon$-close to some appropriate $\phi \circ \pi$. 
For any given $r \in E$ and $\omega \in \Omega$, 
we may imagine that the computational cost of determining $\rho(r)[\omega]$ 
is proportional to the word-length $\lvert r \rvert$ of $r$ (since we may imagine that evaluating 
$\rho(x)[\omega]$ for a single basis element 
$x\in X$ has constant cost). 
Therefore the total cost of running the test is roughly $\lVert E \rVert / \delta$, 
where: 
\begin{equation*}
\lVert E \rVert = \sum_{r \in E} \lvert r \rvert
\end{equation*}

\begin{defn} \label{StabRate}
The \emph{stability growth function} $F_{\Gamma} ^{\pi} : [1,\infty) \rightarrow [0,\infty)$ 
of $\Gamma$ (with respect to $\pi$) is defined by: 
\begin{equation*}
F_{\Gamma} ^{\pi} (x) = \inf_{(\delta,E)} \big( \lVert E \rVert / \delta\big)
\end{equation*}
where the infimum is taken over all pairs $(\delta,E)$ satisfying 
the conditions of Definition \ref{StableDefn} 
with respect to $\epsilon = 1/x \in (0,1]$. 
\end{defn}

Intuitively, if the function 
$F_{\Gamma} ^{\pi} : [1,\infty) \rightarrow (0,\infty)$ grows slowly, 
then there is a choice of the parameters $(\delta,E)$ 
such that it is computationally efficient to 
perform the test outlined above. 
Our main result shows that there exist finitely generated 
stable groups whose stability is \emph{inefficient}, 
in the sense that the test is very costly to perform 
for any choice of parameters. 

\begin{thm} \label{MainThmIntro}
There is an absolute constant $C>0$ such that, 
for any nondecreasing function $F : [1,\infty) \rightarrow (0,\infty)$ 
there exists a finitely generated stable group $\Gamma$ 
equipped with an epimorphism $\pi : \mathbb{F} \twoheadrightarrow \Gamma$ 
from a finite rank free group $\mathbb{F}$ 
such that for all $x \geq C$, $F_{\Gamma} ^{\pi} (x) \geq F(x)$. 
\end{thm}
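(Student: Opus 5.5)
Theorem \ref{MainThmIntro} will be proved by building $\Gamma$ from a family of finite groups and combining two facts: stability of a suitable infinite product-like construction, and a lower bound for $F_{\Gamma}^{\pi}$ coming from finite quotients that are hard to detect. The idea is to take a sequence of finite groups $Q_1, Q_2, \ldots$ with a fixed number of generators. Each $Q_n$ will admit an ``almost-action'' that is very close to an action of $\Gamma$ on every short relator, yet far from any genuine action. Write $\Gamma$ as a finitely generated subgroup of something like $\prod_n Q_n$, or as a suitable amenable extension (e.g.\ a lamplighter-type group $\bigoplus Q_n \rtimes \mathbb{Z}$, or a group whose invariant random subgroups are all co-sofic). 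Stability can then be verified through the Becker--Lubotzky--Thom criterion: an amenable group is stable iff every IRS is a limit of finite-index IRSs. The construction has a free parameter, namely how sparse or large the $Q_n$ are. This parameter will be chosen depending on $F$, so that the lower bound beats $F$.

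The key steps, in order, are as follows.

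(1) Fix a construction $\Gamma = \Gamma(\mathcal{Q})$ and prove that it is stable for every choice of the sequence $\mathcal{Q}$ within some class, using the IRS criterion. This requires $\Gamma$ to be amenable and its IRSs to be approximable by finite-index ones. I would aim to reduce this to the statement that the subgroups of $\bigoplus Q_n$ and the $\mathbb{Z}$-action are sufficiently controlled.

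(2) Lower bound. Let $(\delta,E)$ be admissible for $\epsilon = 1/x$. Since $E$ is finite, every $r \in E$ has length at most $\lVert E\rVert$. Suppose $\lVert E\rVert/\delta < F(x)$, so both $\lVert E\rVert < F(x)$ and $1/\delta < F(x)$, since $\lVert E \rVert \geq 1$ for nonempty $E$. Construct a homomorphism $\rho : \mathbb{F} \to \Sym(\Omega)$ such that $\rho(r) = \id$ for all words $r \in \ker\pi$ of length $\le F(x)$, but $\rho$ is not $1/x$-close to any action of $\Gamma$. A natural choice is $\rho$ factoring through a finite group $\tilde Q$ that agrees with a quotient of $\Gamma$ on the ball of radius $F(x)$ in the Cayley graph but is not itself a quotient, for instance a marked group limit approximant. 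Then \eqref{TestIneq} holds trivially, which contradicts admissibility. So $\Gamma$ should be constructed to have, for each $n$, such a ``fake'' finite marked group $\tilde Q_n$. These fake groups converge to $\Gamma$ in the space of marked groups, while each $\tilde Q_n$ is far from $\Gamma$-actions in normalised Hamming distance by a constant amount, which gives the absolute constant $C$. Choosing the indices $n$ sparsely, according to $F$, makes the radius of agreement exceed $F(x)$ whenever $x \geq C$.

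(3) Monotonicity bookkeeping. Since $F$ is nondecreasing, it suffices to handle $x$ in dyadic ranges $[2^k,2^{k+1})$. For each such range, choose the agreement radius to exceed $F(2^{k+1})$.

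The main obstacle is step (2), specifically proving that the fake action $\rho$ is at normalised distance $\geq 1/C$ from every genuine $\Gamma$-action, while $\Gamma$ remains stable. Stability pushes in the opposite direction, since it says every almost-action is close to an action. The tension is resolved only because each fake action is exact on a finite but huge ball, with the radius of that ball growing beyond $F$. To obtain the distance bound I would first try a rigidity property: in any $\Gamma$-action, some fixed generator (say a lamp $a_n$ of order tied to $Q_n$) must act with a prescribed cycle structure or commutation relation. In $\tilde Q_n$ that relation fails on a positive proportion of points, via a word too long to lie in the ball of radius $F(x)$. A counting argument then shows that any $1/x$-perturbation cannot repair a constant fraction of the points.
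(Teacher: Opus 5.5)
\textbf{Step (2) has a genuine gap, and as you formulate it, it contradicts step (1).}

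You ask for fake actions $\rho_n$ (factoring through $\tilde Q_n$) with two properties: they kill every relator of length at most $R_n$, with $R_n \to \infty$, and they stay at normalised distance at least a fixed constant $1/C$ from every genuine action of $\Gamma$. Any such family makes $\Gamma$ \emph{unstable}. Take $\epsilon = 1/C$ and any finite $E \subseteq \ker(\pi)$, and choose $n$ with $R_n \geq \max_{r \in E} \lvert r \rvert$. Then $\rho_n$ is a $(\delta,E)$-almost-solution for every $\delta > 0$, but it is not $\epsilon$-close to a solution.

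The finiteness of the balls does not resolve this tension. For a stable group, the distance from the fake actions to genuine actions must tend to $0$ as the agreement radius grows, and the theorem is precisely a quantitative trade-off between these two parameters.

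Your rigidity/counting mechanism cannot produce the right trade-off either. Suppose a relation $w$ of $\Gamma$ with $\lvert w\rvert > F(x)$ fails at a fraction $c$ of the points. Then $1/x$-closeness on generators only gives $c < \lvert w\rvert/x$, which is vacuous once $\lvert w\rvert \geq x$. What you need instead is a \emph{short} element, of length at most about $x/2$, with three properties: it is nontrivial in $\Gamma$, it is killed by every genuine action on a set of that size, and it moves essentially every point under the fake action.

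\textbf{This is the missing idea, and it is what the paper uses.} The relevant invariants are the LEF growth $\mathcal{L}^S_\Gamma$ and the full residual finiteness growth $\mathcal{R}^S_\Gamma$.
\begin{itemize}
\item Set $L = \max(\lVert E\rVert, x/2)$, and let $Q$ be a finite group of order $\mathcal{L}^S_\Gamma(L)$ carrying a local embedding $\psi$ of $B_S(L)$.
\item Let $\rho$ be $\psi \circ \pi$ on the generators, followed by the regular representation of $Q$. Then $\rho$ kills $E$ exactly, and every nontrivial element of $B_S(x/2)$ moves every point.
\item If $\rho$ were $1/x$-close to a genuine action $\phi$ on $\lvert Q\rvert$ points, then $\phi$ would be injective on $B_S(x/2)$. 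Hence $\mathcal{R}^S_\Gamma(x/2) \leq \lvert Q\rvert !$.
\end{itemize}
With $x = 2l$ this gives $\mathcal{R}^S_\Gamma(l) \leq \mathcal{L}^S_\Gamma(\max\{2F^\pi_\Gamma(2l), l\})!$. What matters is that $\lvert Q\rvert$ is small compared with $\mathcal{R}^S_\Gamma$, not that $Q$ fails to be a quotient of $\Gamma$; the groups used are residually finite.

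So one needs stable groups whose LEF growth is bounded uniformly, by $\exp(Cl^2\log l)$, while $\mathcal{R}^S_\Gamma$ is arbitrarily large. These are the generalized B.H. Neumann groups $G(d,r)$, with $d(n)$ chosen huge in terms of $F$.

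\textbf{Your step (1) is also only a placeholder.} Stability of $G(d,r)$ requires Weiss approximation for the ergodic IRS supported on $L_\infty$. For the other ergodic IRS it requires pushing forward to $C_3 \wr \mathbb{Z}$: there are countably many subgroups outside $B(W)$, so the pushforward is atomic, and almost-normal subgroups outside $B(W)$ have finite index. Finite index must then be lifted back to $G$ via Babai's theorem. None of this is captured by saying that subgroups and the $\mathbb{Z}$-action are ``sufficiently controlled''.
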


\subsection{Background}

Determining, for a given $\pi : \mathbb{F} \twoheadrightarrow \Gamma$, 
whether $\rho : \mathbb{F} \rightarrow \Sym(\Omega)$ is close to some homomorphism 
factoring through $\pi$, 
is an example of a ``property-testing'' problem: 
we are interested in some parameter of a huge object, 
and seek a fast (possibly randomised) algorithm which will, 
in a small number of queries (``small'' meaning, for example, 
of a number which is bounded independent of the size of the object), 
give an approximate evaluation of the parameter. 
In our context, the object in question is a tuple 
$(\rho(x))_{x \in X}$ of permutations of a large finite set $\Omega$, 
and the parameter we wish to evaluate is whether or not 
the $\rho(x)$ satisfy the defining relations between a given ordered set of 
generators $(\pi(x))_{x \in X}$ for the group $\Gamma$. 
The algorithm which implements the test described in Subsection \ref{StateResultsSubsect} above 
is named ``Sample-and-Substitute'' in \cite{BecLubMosI}. 
The number of queries required by Sample-and-Substitute 
is then the number $n$ of points $\omega_i$ which are sampled from the 
permutation domain $\Omega$. 
If the group is stable, then there exists a way of running the algorithm 
with a number of queries which is independent of $\lvert \Omega \rvert$, 
such that the outcome gives reasonable confidence as to whether 
the $\rho(x)$ are close in the Hamming metric to 
permutations satisfying the defining relations of $\Gamma$. 
For more on this perspective on group stability coming from 
theoretical computer science, see \cite{BecLubMos}. 

From this point of view it is very natural to ask for an estimate of 
the computational costs associated to running Sample-and-Substitute, 
of which the stability growth function, as defined above, is a sensible measure. 
Theorem \ref{MainThmIntro} tells us that there are groups 
for which implementing Sample-and-Substitute is viable on very large 
permutation domains $\Omega$, in the sense that the number of queries is 
independent of $\lvert \Omega \rvert$, 
but for which the cost of running Sample-and-Substitute grows very quickly 
as a function of the proximity $\epsilon$ to an action of $\Gamma$ which we demand. 
Currently the class of stable groups for which 
we have good bounds on the stability growth function remains 
extremely limited. 
Becker and Mosheiff \cite{BecMos} quantify the stability 
of finitely generated abelian groups 
(the stability of such groups 
being due to Arzhantseva and Paunescu \cite{ArzPau}). 
In our terminology, their main result implies the following. 

\begin{thm} \label{BeckMoshThmIntro}
    Suppose $\Gamma \cong \mathbb{Z}^d$ for $d \geq 2$. 
    Then there exists $D= D(d) > 0$ such that:
    \begin{center}
    $x^{\frac{d}{2}} \ll_d F_{\Gamma} ^{\pi} (x) \ll_d x^D$
    \end{center}
    for all $x \geq 1$. 
\end{thm}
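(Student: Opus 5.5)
The upper and lower bounds are handled separately, both by translating the Becker--Mosheiff results \cite{BecMos} (which work with a fixed finite presentation) into the language of Definition \ref{StabRate}. Throughout, $\mathbb{F}$ is free on $X=\{x_1,\dots,x_d\}$ and $\pi$ sends $x_i$ to the standard generators. Changing $\pi$ changes $F^\pi_\Gamma$ only up to multiplicative constants and rescaling of $x$, which polynomial bounds absorb. So I would first reduce to this standard $\pi$.

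\textbf{Upper bound.} Take $E=\{[x_i,x_j]:i<j\}$, so $\lVert E\rVert = 4\binom{d}{2}$ is a constant. The quantitative theorem of Becker and Mosheiff gives a constant $D'=D'(d)$ with the following property: if $d_\Omega(\rho([x_i,x_j]),\id)<c\,\epsilon^{D'}$ for all $i<j$, then $\rho$ is $\epsilon$-close to a $\mathbb{Z}^d$-action. Hence $(\delta,E)=(c\,x^{-D'},E)$ is admissible for $\epsilon=1/x$. It follows that $F^\pi_\Gamma(x)\le \lVert E\rVert/\delta\ll_d x^{D'}$, and we may take $D=D'$.

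\textbf{Lower bound.} The difficulty is that $E\subseteq\ker\pi$ is now arbitrary, with relators of arbitrary length, whereas Becker--Mosheiff only control commutators. I would use their lower-bound construction in the following form. For each small $\epsilon$ there is $\rho:\mathbb{F}\to\Sym(\Omega)$ and a ``defect set'' $S\subseteq\Omega$ such that:
\begin{enumerate}
\item $|S|\le C\epsilon^{d/2}|\Omega|$;
\item $\rho$ is not $\epsilon$-close to any $\phi\circ\pi$;
\item the Schreier graph of $\rho$ restricted to $\Omega\setminus S$ is locally that of a genuine $\mathbb{Z}^d$-action.
\end{enumerate}
Concretely, property (3) means there is a $\mathbb{Z}^d$-set $\Omega'$, possibly infinite (for example $\mathbb{Z}^d$ itself or a large torus), and a map $p:\Omega\to\Omega'$ with $p(\rho(x^{\pm1})\omega)=x^{\pm1}\cdot p(\omega)$ whenever $\omega\notin S$. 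This is a ``torus with slits'' picture, and I expect to extract it by inspecting their extremal example rather than from the statement of their theorem. It is the main obstacle: it requires verifying that their defects are genuinely localized and that the defect density scales as $\epsilon^{d/2}$.

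Given such $\rho$, fix any admissible pair $(\delta,E)$ for $\epsilon$. Let $r\in E$ have length $\ell$, and call $\omega$ bad for $r$ if the trajectory of $\omega$ under the successive letters of $r$ meets $S$. At most $\ell|S|$ points are bad, since each of the $\ell$ positions along the trajectory lies in $S$ for at most $|S|$ starting points (each prefix acts as a permutation). If $\omega$ is good, then $p(\rho(r)\omega)=\pi(r)\cdot p(\omega)=p(\omega)$. Ensuring this actually yields $\rho(r)\omega=\omega$ requires $p$ to be injective on trajectories of length at most $\ell$ avoiding $S$, which I would arrange from the construction. Hence
\[
d_\Omega(\rho(r),\id)\le \lvert r\rvert\,C\epsilon^{d/2}.
\]
Since $\rho$ is not $\epsilon$-close to any action, admissibility forces some $r\in E$ with $d_\Omega(\rho(r),\id)\ge\delta$. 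This gives $\delta\le \lvert r\rvert\,C\epsilon^{d/2}\le C\lVert E\rVert\epsilon^{d/2}$, that is, $\lVert E\rVert/\delta\ge C^{-1}x^{d/2}$. Taking the infimum over admissible pairs yields $F^\pi_\Gamma(x)\gg_d x^{d/2}$ for $x$ large. Small $x$ are handled trivially, since $\lVert E\rVert\ge 1$ and $\delta\le 1$.
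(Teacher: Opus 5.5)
Your upper bound is correct and is the paper's argument in direct form: you take $E$ to be the commutators and apply Theorem 1.16 of \cite{BecMos}, whereas the paper routes this through $H_{\langle X\mid R\rangle}$ and the first inequality of Proposition \ref{HvsStabGrowthProp}.

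The lower bound has a genuine gap. The only input from \cite{BecMos} is Theorem 1.17, $\SR(\eta)\gg\eta^{1/d}$. This says there are $\rho$ whose commutator defects are of order $\epsilon^{d}$ but which are not $\epsilon$-close to any action. It says nothing about the defects being confined to a small set $S$ off which $\rho$ has an equivariant chart $p$ to a genuine $\mathbb{Z}^d$-set, injective along long trajectories. Your properties (1) and (3), and the injectivity of $p$, are therefore hypotheses about a construction you have not examined. Moreover, the exponent $d/2$ in (1) is not derived from anything; it is simply the exponent the conclusion needs. If an extremal example really had this form, with $\lvert S\rvert/\lvert\Omega\rvert$ comparable to its commutator defect, your trajectory count would give $F^\pi_\Gamma(x)\gg x^{d}$, more than the theorem asserts. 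That shows how much this unproved step is carrying. Without it you cannot bound $d_\Omega(\rho(r),\id)$ for arbitrarily long $r\in E$ in terms of the commutator defects, which is exactly the difficulty you identified.

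The missing idea is to do this reduction with the Dehn function, using \cite{BecMos} only as a black box. Each $w\in\ker\pi$ is a product of at most $D_{\langle X\mid R\rangle}(\lvert w\rvert)$ conjugates of commutators. By bi-invariance of $d_\Omega$ this gives $d_\Omega(\rho(w),\id)\le D_{\langle X\mid R\rangle}(\lvert w\rvert)\max_{r\in R}d_\Omega(\rho(r),\id)$. Hence if $(\delta,E)$ is admissible for $\epsilon=1/x$, then $\delta/D_{\langle X\mid R\rangle}(\lVert E\rVert)$ is admissible for the commutator presentation. Using the quadratic Dehn function of $\mathbb{Z}^d$ and $\delta\le 1$, this gives $H_{\langle X\mid R\rangle}(x)\le D_{\langle X\mid R\rangle}(\lVert E\rVert)/\delta\ll\lVert E\rVert^2/\delta\le(\lVert E\rVert/\delta)^2$. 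Taking the infimum yields $H_{\langle X\mid R\rangle}(x)\ll F^\pi_\Gamma(x)^2$ (Remark \ref{BetterDehnBdRmrk}). Combine this with $H_{\langle X\mid R\rangle}(x)\gg x^{d}$, which is Theorem 1.17 of \cite{BecMos} translated via Example \ref{PolySREquiv}, to get $F^\pi_\Gamma(x)\gg x^{d/2}$. The square root in the exponent is precisely the loss from the quadratic Dehn function, not a defect density of any particular construction.
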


In particular, $\mathbb{Z}^d$ is 
\emph{polynomially stable}. 
The fact that there are finitely generated groups which are stable 
but not polynomially stable 
is already a novel consequence of our Theorem \ref{MainThmIntro}. 
It remains an intriguing open question 
to construct finitely \emph{presented} 
groups which are stable but not polynomially stable.

The groups $\Gamma$ constructed in the proof of Theorem \ref{MainThmIntro} 
were introduced by the author in \cite{BradRF}, and are in turn a slight 
modification of an earlier construction of Bou-Rabee and Seward \cite{BoRaSewa}; 
we call them the \emph{generalised B.H. Neumann groups} 
(being as they are a variant of the continuum of pairwise nonisomorphic finitely generated 
groups constructed in \cite{Neum}). 
The key to the proof of Theorem \ref{MainThmIntro} is that the 
stability growth function of a group $\Gamma$ controls the disparity 
between two other asymptotic invariants of $\Gamma$: 
the \emph{full residual finiteness growth function} $\mathcal{R}_{\Gamma}$ 
and the \emph{LEF growth function} $\mathcal{L}_{\Gamma}$ 
(see Subsection \ref{LEFRFPrelimSubsect} below). 
We always have $\mathcal{R}_{\Gamma} \geq \mathcal{L}_{\Gamma}$, 
and if the stability growth function of $\Gamma$ grows slowly, then it is not hard to see that
$\mathcal{R}_{\Gamma}$ cannot grow much faster than $\mathcal{L}_{\Gamma}$. 
Bou-Rabee and Seward's groups were introduced precisely as the 
first examples of groups which are residually finite, 
but for which $\mathcal{R}_{\Gamma}$ can grow arbitrarily quickly, 
and a bound on the LEF growth of these groups was already exhibited 
by the author in \cite{Brad}. 
Bounds on $\mathcal{R}_{\Gamma}$ for the groups considered in the present article 
are given in \cite{BradRF}; we give below an upper bound on their LEF growth 
(see Proposition \ref{LEFUBProp}). 

The same basic strategy outlined above has already been fruitfully employed 
by Dogon, Levit and Vigdorovich in \cite{DogLevVig}, 
and applied to the same groups from \cite{BradRF}, 
to prove a result analogous to our Theorem \ref{MainThmIntro}, 
but with respect to \emph{Hilbert-Schmidt stability} 
rather than permutation stability, 
that is with finite symmetric groups equipped with the Hamming metric 
being replaced by unitary groups of finite-dimensional complex 
Hilbert spaces, equipped with the normalised Hilbert-Schmidt metric. 
It is interesting to note that HS-stability of the groups 
from \cite{BradRF} follows from a general result, also proved in \cite{DogLevVig}, 
on the HS-stability of so-called \emph{diagonal product} groups. 
Such a result is not known in the setting of permutation stability, 
so we make use of more ad hoc arguments. 
It should also be noted that Dogon, Levit and Vigdorovich state their result 
only for the ``stability radius growth'' function: 
this is a variant of stability growth function, according to which only the maximal word 
length of elements of the finite set $E$ appearing in Definition \ref{StableDefn} 
contributes to the ``cost'', with the parameter $\delta$ playing no r\^{o}le. 
Nevertheless, their proof should also be relevant to the 
analogue, for the Hilbert-Schmidt metric, of 
the stability growth function as we have defined it. 

Having established bounds on $\mathcal{R}_{\Gamma}$ and $\mathcal{L}_{\Gamma}$ for our groups 
$\Gamma$, it remains to prove that the $\Gamma$ are indeed stable. 
The original B.H. Neumann groups were proven stable by Levit and Lubotzky \cite{LevLub}; 
our strategy follows theirs: first, since the $\Gamma$ are \emph{amenable groups}, 
the criterion of Becker, Lubotzky and Thom \cite{BecLubTho} 
for stability of $\Gamma$ in terms of the cosoficity of the 
\emph{invariant random subgroups} (IRS) of $\Gamma$ is available to us 
(see Theorem \ref{IRSStabCritThm} below). 
To prove cosoficity of many of the IRS of $\Gamma$, 
we avail ourselves of the technique of \emph{Weiss approximation} 
used in \cite{LevLub} (see Theorem \ref{WeissThm} below). 
Finally, the structure of $\Gamma$ as a diagonal product of marked groups 
which converge to a ``limiting group'' $\tilde{\Gamma}$ 
(which is hence a quotient of $\Gamma$) allows us to lift 
restrictions on the structure of the IRS of $\tilde{\Gamma}$ 
to those of $\Gamma$. In our setting, $\tilde{\Gamma}$ 
is always the \emph{ternary lamplighter group} $C_3 \wr \mathbb{Z}$, 
whose IRS were studied by Bowen, Grigorchuk and Kravchenko \cite{BowGrigKrav}. 

There are many other reasonable formulations of stability growth function, 
which weigh the costs associated to the parameters 
$E$ and $\delta$ appearing in Definition \ref{StableDefn} differently. 
Our methods are extremely robust with respect to 
the precise formulation of stability growth function used, 
hence we would have an analogue of Theorem \ref{MainThmIntro} for any such function, 
provided the function adequately encodes the idea that testing longer words is more costly. 

The paper is structured as follow. 
In Section \ref{PrelimSect} we prove the basic properties of the stability growth function 
and compare it with the stability rate function of Becker and Moshieff, 
proving Theorem \ref{BeckMoshThmIntro} in the process. 
We also recall the definitions of the LEF growth and full residual finiteness growth functions 
and prove our key inequality relating the two to the stability growth function. 
In Section \ref{NeumannSection} we describe the generalised B.H. Neumann groups, 
and show that \emph{if} they are stable, then they satisfy 
the conclusion of our main result. 
In Section \ref{IRSSect} we study the invariant random subgroups of the 
generalised B.H. Neumann groups and thereby prove that they \emph{are} stable. 
We close with some questions for further investigation. 

\section{Preliminaries} \label{PrelimSect}

\subsection{The stability growth function}

Once again, we fix a finite-rank free group $\mathbb{F}$ 
on a preferred basis $X$. 

\begin{defn} \label{SolnDefn}
For $E \subseteq \mathbb{F}$ we refer to a homomorphism 
$\phi : \mathbb{F} \rightarrow \Sym(n)$ as a \emph{solution} 
for $E$ if $E \subseteq \ker (\phi)$ and for $\delta > 0$ 
we refer to $\phi$ as a \emph{$(\delta,E)$-almost-solution} 
if, for all $r \in E$, $d_n (\phi(r),\id_n) < \delta$. 
\end{defn}

Thus, given a homomorphism $\pi : \mathbb{F} \rightarrow \Gamma$ 
we may restate Definition \ref{StableDefn} as: 
$\Gamma$ is stable if for every $\epsilon > 0$ 
there exist $\delta > 0$ and $E \subseteq \ker(\pi)$ 
such that every $(\delta,E)$-almost-solution is $\epsilon$-close 
to a solution for $\ker(\pi)$. 

\begin{ex} \label{FreeStabNoCost}
Suppose $\pi : \mathbb{F} \twoheadrightarrow \Gamma$ 
is an isomorphism, so that $\ker (\pi) = \lbrace e \rbrace$. 
Thus every homomorphism $\phi : \mathbb{F} \rightarrow \Sym(n)$ 
is a solution for $\ker (\pi)$. 
In particular for any $\epsilon=1/x , \delta > 0$, 
every $(\delta, \emptyset)$-almost-solution 
is $\epsilon$-close to a solution for $\ker (\pi)$. 
Thus $F_{\Gamma} ^{\pi} (x) = 0$. 
\end{ex}

Apart from the degenerate case described 
in Example \ref{FreeStabNoCost}, 
every epimorphism yields a stability growth function 
which is at least linear 
(compare with Proposition A.5 of \cite{BecMos}, 
which establishes an analogous statement for 
the \emph{stability rate} 
(Definition \ref{StabRateDefn} below) of a finitely presented group). 

\begin{propn} \label{StabLinearProp}
Suppose $\Gamma$ is stable. If $\ker (\pi)$ is nontrivial, 
then there exists $C_1 ,C_2> 0$ such that for all $x \geq C_1$, 
\begin{itemize}
    \item[(i)] If $\delta > 0$ is such that there exists $E \subseteq \ker(\pi)$, 
    for which every $(\delta,E)$-almost-solution is $x^{-1}$-close to 
    a solution for $\ker(\pi)$, then $\delta \leq C_2 x^{-1}$; 

    \item[(ii)] $F_{\Gamma} ^{\pi} (x) \geq x/4$. 
\end{itemize}
\end{propn}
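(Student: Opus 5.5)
The plan is to build, for each large $x$, an explicit action of $\mathbb{F}$ that is an almost-solution for \emph{every} finite $E\subseteq\ker(\pi)$ with small parameter, yet is far from every solution for $\ker(\pi)$. The action is mostly trivial, with a small ``defective'' block on which a fixed nontrivial relator acts without fixed points.

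\textbf{Step 1: the defective block.} Fix a nontrivial $r\in\ker(\pi)$; for (ii) we will take $r\in E$ itself. Since $\mathbb{F}$ is residually finite, there is a finite quotient of $\mathbb{F}$ in which $r$ has nontrivial image. The left-regular action of that quotient gives a finite $\mathbb{F}$-set $\Omega_1$ on which $\rho(r)$ is fixed-point-free.

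\textbf{Step 2: embed it in a mostly trivial action.} Let $\Omega=\Omega_0\sqcup\Omega_1'$, where $\Omega_1'$ is a disjoint union of copies of $\Omega_1$ and every generator acts trivially on $\Omega_0$. Taking enough copies and a suitable $|\Omega_0|$, the ratio $\eta=|\Omega_1'|/|\Omega|$ can be any prescribed rational in $(0,1]$. For every $w\in\ker(\pi)$, $\rho(w)$ is the identity on $\Omega_0$, so $d_\Omega(\rho(w),\id)\le\eta$. Hence $\rho$ is a $(\delta',E)$-almost-solution for every finite $E\subseteq\ker(\pi)$ and every $\delta'>\eta$.

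\textbf{Step 3: the distance lower bound (the main obstacle).} Let $\phi$ be any solution for $\ker(\pi)$, so that $\phi(r)=\id$. For $\omega\in\Omega_1'$, follow the path of $\omega$ under the letters of $r$ (inverses included). If $\phi$ agreed with $\rho$ at every step, then $\phi(r)\omega=\rho(r)\omega\neq\omega$, a contradiction. So each such $\omega$ has a step $i\le|r|$ where $\phi$ and $\rho$ disagree on the current point and letter.

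A disagreement $\rho(y)p\ne\phi(y)p$ at a point $p$ can be ``blamed'' by at most about $2|r|$ starting points $\omega$. Indeed, for a fixed position $i$ the map $\omega\mapsto$ (point reached at step $i$) is injective, and inverse letters contribute the factor $2$. Summing over the generators gives
$$\sum_{y\in X} d_\Omega(\rho(y),\phi(y))\ \ge\ \frac{\eta}{2|r|},$$
so some generator has distance at least $\eta/(2|X||r|)$.

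Now choose $\eta\in[K/x,2K/x]$ with $K=2|X||r|$; this is possible once $x\ge C_1:=2K$, so that $\eta\le 1$. Then $\rho$ is not $x^{-1}$-close to any solution. I expect the bookkeeping in this step, and sharpening its constants, to be the delicate part.

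\textbf{Step 4: conclusions.} For (i), suppose $(\delta,E)$ has the stated property. Then $\rho$ is not a $(\delta,E)$-almost-solution, so $\delta\le\eta\le 2K/x=:C_2x^{-1}$.

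For (ii), first note that $E$ is nonempty: otherwise $\rho$ would be a $(\delta,\emptyset)$-almost-solution for every $\delta$, yet it is far from all solutions. Choose $r\in E$ nontrivial and run the construction with this $r$. This gives $\delta\le 2\cdot 2|X||r|/x$, and since $\lVert E\rVert\ge|r|$ we get $\lVert E\rVert/\delta\ge x/(4|X|)$.

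This gives linear growth with a constant depending on $|X|$. To reach the clean constant $x/4$, I would refine Step 3. One option is to concentrate the defect on a single generator. Another is to use that $\epsilon$-closeness must hold for every $y\in X$ simultaneously, and count disagreements more carefully so that the factors $2$ and $|X|$ are absorbed.
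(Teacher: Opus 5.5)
Your construction is essentially the paper's: a mostly trivial action, padded with copies of a regular action on which a fixed nontrivial relator has no fixed points, with the defect density tuned to about $1/x$. As written, however, the proposal does not prove (ii). Your Step 3 only gives $\max_{y} d_\Omega(\rho(y),\phi(y)) \geq \eta/(2\lvert X\rvert\lvert r\rvert)$, hence $F_{\Gamma}^{\pi}(x)\geq x/(4\lvert X\rvert)$, and the removal of the factor $2\lvert X\rvert$ is left as a hope.

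The missing step is the paper's opening observation. Write $w=y_1^{\pm1}\cdots y_k^{\pm1}$ and suppose $\rho$ is $\epsilon$-close to a solution $\phi$. Bi-invariance of $d_\Omega$ and the triangle inequality give $d_\Omega(\rho(w),\id_\Omega)=d_\Omega(\rho(w),\phi(w))\leq\sum_{i=1}^{k} d_\Omega(\rho(y_i),\phi(y_i))<\lvert w\rvert\epsilon$ for every $w\in\ker(\pi)$. This uses the \emph{maximum} over generators times $\lvert w\rvert$, not a sum over $X$. Inverse letters cost nothing, since $d_\Omega(\sigma^{-1},\tau^{-1})=d_\Omega(\sigma,\tau)$. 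Your blame count yields the same bound if you charge each starting point $\omega$ to the position of its first disagreement and sum over the $\lvert w\rvert$ positions, rather than over generators and signs. So $\rho$ fails to be $x^{-1}$-close to any solution as soon as its defect density satisfies $\eta\geq\lvert w\rvert/x$.

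There is a second, unacknowledged gap. In (ii) you run the construction with some $r\in E$, so your threshold $C_1=4\lvert X\rvert\lvert r\rvert$ depends on $E$. Since $F_{\Gamma}^{\pi}(x)$ is an infimum over all admissible pairs $(\delta,E)$, this does not give a single $C_1$. The paper avoids this by always using a fixed nontrivial $w_1\in\ker(\pi)$ of minimal word length. The only fact needed about $E$ is then that it contains a nontrivial element, so that $\lVert E\rVert\geq\lvert w_1\rvert$. Note that you need $E\nsubseteq\lbrace e\rbrace$ here, not merely $E\neq\emptyset$, though your argument does force this.

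Alternatively you can keep $r\in E$. If $\lvert r\rvert>x$, then $\lVert E\rVert/\delta\geq\lVert E\rVert>x$ directly, because $\delta\leq 1$ by Definition \ref{StableDefn}; otherwise the construction goes through.

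With both fixes, your finer tuning of $\eta$ (any rational $\eta\geq\lvert w_1\rvert/x$) gives $\delta\leq\lvert w_1\rvert/x$ and hence $\lVert E\rVert/\delta\geq x$ for all $x\geq\lvert w_1\rvert$. This is slightly better than the stated $x/4$. The paper loses one factor of $2$ by placing the density in $(\delta/2,\delta)$, and another by passing to $E'=E\cup\lbrace w_1\rbrace$.
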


\begin{proof}
First, observe that for any $e \neq w \in \ker (\pi)$ 
and any homomorphism $\phi : \mathbb{F} \rightarrow \Sym(n)$ 
with $\phi(w) \neq \id_n$, 
if $0 < \epsilon <d_n (\phi(w),\id_n)/\lvert w \rvert$ 
then $\phi$ is not $\epsilon$-close 
to a solution for $\ker(\pi)$. 

Thus, choosing an arbitrary $e \neq w_0 \in \ker (\pi)$ 
and a homomorphism $\phi_0 : \mathbb{F} \rightarrow \Sym(n)$ 
with $\phi_0 (w_0) \neq \id_n$, and let 
$C_1 = 2 \lvert w_0 \rvert / d_n (\phi_0 (w_0),\id_n)$. 
Then for $E = \emptyset$ or $E = \lbrace e \rbrace$, 
$\phi_0$ is a $(\delta,E)$-almost solution for every $\delta > 0$ 
but is not $C_1 ^{-1}$-close to a solution for $\ker (\pi)$. 

Now, for every $x \geq C_1$, every $\delta > 0$ 
and every $E \subseteq \ker(\pi)$ finite 
with the property that every $(\delta,E)$-almost-solution 
is $x^{-1}$-close to a solution for $\ker(\pi)$, 
we must have $E \nsubseteq \lbrace e \rbrace$. 
Given such a pair $(\delta,E)$, 
let $e \neq w_1 \in \ker(\pi)$ be an element of minimal word-length, 
and set $E' = E \cup \lbrace w_1 \rbrace$, 
so that $\lVert E' \rVert \leq 2 \lVert E \rVert$ 
and every $(\delta,E')$-almost-solution 
is still a $(\delta,E)$-almost-solution. 
Set $C_2 = 2 \lvert w_1 \rvert$ 
and take a homomorphism $\psi : \mathbb{F} \rightarrow \Sym(\Omega)$, 
for some finite set $\Omega$, for which $\psi (w_1) \neq \id_{\Omega}$. 
Composing with the regular permutation representation of 
$\Sym(\Omega)$ if required, we may assume 
$d_{\Omega} (\psi (w_1),\id_{\Omega})=1$. 
Next, choose positive integers $q$ and $r$ such that: 
$$\frac{q \lvert \Omega \rvert}{q \lvert \Omega \rvert + r} 
\in (\delta/2,\delta),$$
let $\Omega_1 , \ldots , \Omega_q$ be disjoint 
copies of $\Omega$ and let $\Pi$ be a set of size $r$, 
disjoint from the $\Omega_i$. 
Let $\Sigma = \Omega_1 \cup \ldots \cup \Omega_q \cup \Pi$ 
and let $\Psi : \mathbb{F} \rightarrow \Sym(\Sigma)$ 
be defined such that for each $v \in \mathbb{F}$, 
$\Psi(v)$ fixes all points of $\Pi$ 
and acts on each $\Omega_i$ as $\psi (v)$ acts on $\Omega$. 
Then $\Psi$ is a $(\delta,E')$-almost-solution 
but $d_{\Sigma} (\Psi(w_1),\id_{\Sigma}) > \delta/2$. 
Thus, by our initial observation, 
$\Psi$ is not $(\delta/2\lvert w_1 \rvert)$-close 
to a solution for $\ker (\pi)$. Hence: 
$$x \leq \frac{2\lvert w_1 \rvert}{\delta} \leq 2 (\frac{\lVert E' \rVert}{\delta}) 
\leq 4 (\frac{\lVert E \rVert}{\delta}) $$
Since this holds for every such pair $(\delta,E)$, 
we have $x \leq 4 F_{\Gamma} ^{\pi} (x)$, and (ii) follows. 
Moreover, the leftmost inequality yields (i). 
\end{proof}

A priori, 
for a given group $\Gamma$, 
satisfaction of Definition \ref{StableDefn} 
and the behaviour of the stability growth function $F_{\Gamma} ^{\pi}$ 
depends on a choice of finite-rank free group 
$\mathbb{F}$ and of an epimorphism 
$\pi : \mathbb{F} \twoheadrightarrow \Gamma$. 
In fact, stability of the group $\Gamma$ 
is known to be an intrinsic property of $\Gamma$ itself 
(see Remark 3.13 of \cite{BecLubTho}). 
Moreover, Becker and Mosheiff show that the stability 
rate of a finitely presented group is independent 
of choice of presentation up to equivalence 
of growth functions (see Section 2 of \cite{BecMos}), 
in the following sense which is standard in the geometric group theory literature. 

\begin{defn}
    Let $F_1 , F_2 : [0,\infty) \rightarrow (0,\infty)$ be nondecreasing functions. 
    We write $F_1 \preceq F_2$ ($F_2$ \emph{dominates} $F_1$) 
    if there exists $C>0$ such that 
    for all $x \geq 1$, 
    $$F_1 (x) \leq C F_2 (Cx) + Cx + C$$
    and write $F_1 \approx F_2$ 
    ($F_1$ and $F_2$ are equivalent) if both $F_1 \preceq F_2$ and $F_2 \preceq F_1$. 
\end{defn}

By adapting the arguments of of \cite{BecMos} we show that 
up to equivalence, 
the stability growth function $F_{\Gamma} ^{\pi}$ 
is independent of $\pi$. 

\begin{propn} \label{PresnEquivPropn}
Let $\Gamma$ be a finitely generated group, 
and for $i=1,2$ let $\mathbb{F}_i$ be a 
free group on a finite basis $X_i$ and 
let $\pi_i : \mathbb{F}_i \twoheadrightarrow \Gamma$ 
be an epimorphism. 
If $\pi_1$ satisfies the conditions of Definition \ref{StableDefn}, 
then so too does $\pi_2$. Moreover there exists $C_0 > 0$ such that for all $x \geq 1$, 
\begin{itemize}
    \item[(i)] If $\pi_1$ is an isomorphism then $F_{\Gamma} ^{\pi_2} (x) \leq C_0 x$; 
    \item[(ii)] If $\pi_1$ is not an isomorphism then $F_{\Gamma} ^{\pi_2} (x) \leq C_0 F_{\Gamma} ^{\pi_1} (C_0 x)$. 
\end{itemize}
\end{propn}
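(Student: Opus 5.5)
We transfer almost-solutions between the two presentations using a pair of lifts of the identity of $\Gamma$. For each $y \in X_2$ choose a word $u_y \in \mathbb{F}_1$ with $\pi_1(u_y) = \pi_2(y)$, and for each $x \in X_1$ choose $v_x \in \mathbb{F}_2$ with $\pi_2(v_x) = \pi_1(x)$. These define homomorphisms $\alpha : \mathbb{F}_2 \to \mathbb{F}_1$, $y \mapsto u_y$, and $\beta : \mathbb{F}_1 \to \mathbb{F}_2$, $x \mapsto v_x$, satisfying $\pi_1 \circ \alpha = \pi_2$ and $\pi_2 \circ \beta = \pi_1$. Set $L = \max(\max_y \lvert u_y \rvert, \max_x \lvert v_x \rvert)$, so that $\alpha$ and $\beta$ multiply word length by at most $L$. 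Consider the finite set of ``transition relators'' $T = \lbrace y^{-1} \beta(\alpha(y)) : y \in X_2 \rbrace \subseteq \ker(\pi_2)$. Its total length $\lVert T \rVert \le \lvert X_2 \rvert (1 + L^2)$ is a constant.

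Given $\epsilon = 1/x$, take a pair $(\delta_1,E_1)$ for $\pi_1$ at accuracy $\epsilon_1 = \epsilon/(2L)$ that nearly attains $F^{\pi_1}_\Gamma(2Lx)$. Define $E_2 = \beta(E_1) \cup T \subseteq \ker(\pi_2)$ and $\delta_2 = \min(\delta_1, \epsilon/2)$. Then
\[
\lVert E_2 \rVert \le L \lVert E_1 \rVert + \lVert T \rVert .
\]
Now let $\rho : \mathbb{F}_2 \to \Sym(\Omega)$ be a $(\delta_2,E_2)$-almost-solution. Then $\rho \circ \beta$ is a $(\delta_1,E_1)$-almost-solution for $\pi_1$. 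Stability of $\pi_1$ then gives $\phi : \Gamma \to \Sym(\Omega)$ with $\rho(\beta(x))$ being $\epsilon_1$-close to $\phi(\pi_1(x))$ for all $x \in X_1$.

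Next we bound the distance for the generators of $\mathbb{F}_2$. For $y \in X_2$ we have $\phi\pi_2(y) = \phi\pi_1(u_y)$. We also have $\rho(\beta(u_y)) = \rho(\beta\alpha(y))$, which differs from $\rho(y)$ on a set of measure less than $\delta_2 \le \epsilon/2$, because $y^{-1}\beta\alpha(y) \in T$. By the standard telescoping estimate (Hamming distance is bi-invariant, so a product of $k$ pairwise $\epsilon_1$-close factors is $k\epsilon_1$-close), $\rho(\beta(u_y))$ is $L\epsilon_1$-close to $\phi\pi_1(u_y)$. Hence $d(\rho(y),\phi\pi_2(y)) < \epsilon/2 + L\epsilon_1 = \epsilon$. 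This shows that $\pi_2$ satisfies Definition \ref{StableDefn}.

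It remains to estimate the cost. We have
\[
\lVert E_2 \rVert/\delta_2 \le (L\lVert E_1\rVert + \lVert T\rVert)\max(\delta_1^{-1}, 2x).
\]
In case (ii), the main obstacle is absorbing the additive terms $\lVert T \rVert/\delta_1$ and $2x(\cdots)$ into a multiple of $F^{\pi_1}_\Gamma(C_0 x)$. For this we invoke Proposition \ref{StabLinearProp}: since $\ker\pi_1 \ne 1$, part (i) gives $\delta_1 \le C_2/(2Lx)$ for large $x$, so $\delta_1^{-1} \gtrsim x$. This makes $\max(\delta_1^{-1},2x) \le C\delta_1^{-1}$. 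Moreover $E_1$ is nonempty and nontrivial, so $\lVert E_1 \rVert \ge 1$ and $\lVert T \rVert \le \lVert T \rVert\cdot\lVert E_1\rVert$. Hence the cost is at most $C\lVert E_1\rVert/\delta_1$, which gives (ii) for $x$ large. Small $x$ are handled by monotonicity in $x$ together with enlarging $C_0$, using Proposition \ref{StabLinearProp}(ii) to keep $F^{\pi_1}_\Gamma$ bounded below.

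In case (i), we may take $E_1 = \emptyset$ with any $\delta_1$, since every homomorphism is then a solution. So $E_2 = T$ and $\delta_2 = \epsilon/2$, and the cost is at most $2\lVert T\rVert x = C_0 x$.
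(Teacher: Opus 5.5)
Your proposal is correct and follows the paper's proof essentially step for step. Your $\beta$ and $\alpha$ are the paper's $\varphi$ and $\psi$, and your $T$ is its $Y_2$; you make the same choices $E_2=\beta(E_1)\cup T$, $\delta_2=\min(\delta_1,\epsilon/2)$ and accuracy $\epsilon/2L$ for $\pi_1$, and you use Proposition \ref{StabLinearProp}(i) in the same way to absorb the additive terms in case (ii). The only cosmetic point is to take $L\geq 1$, as the paper does with $C>1$, so that $\epsilon/(2L)$ is defined when $\Gamma$ is trivial.
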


\begin{proof}
Enlarging the constant $C_0$ if required, it suffices to prove the given inequalities for 
$x$ sufficiently large. 
By the universal property of free groups, there exist 
homomorphisms $\varphi : \mathbb{F}_1 \rightarrow \mathbb{F}_2$ 
and $\psi : \mathbb{F}_2 \rightarrow \mathbb{F}_1$ 
satisfying $\pi_2 \circ \varphi = \pi_1$ 
and $\pi_1 \circ \psi = \pi_2$. 
Let $C>1$ be such that $\varphi(X_1) \subseteq B_{X_2} (C)$ 
and $\psi(X_2) \subseteq B_{X_1} (C)$. 
Let $$Y_2 = \lbrace x^{-1} (\varphi \circ \psi)(x) : x \in X_2 \rbrace 
\subseteq \ker (\pi_2) \cap B_{X_2} (C^2 + 1).$$
Let $\epsilon > 0$ and take $\delta > 0$ and $E_1\subseteq\ker(\pi_1)$ 
finite such that every $(\delta,E_1)$-almost-solution is 
$(\epsilon/2C)$-close to a solution for $\ker(\pi_1)$. 
Moreover, if $\pi_1$ is an isomorphism we take $E_1 = \emptyset$ 
and $\delta = 1$. 

Set $E_2 ' = \varphi(E_1) \cup Y_2$ and $\delta' = \min (\delta,\epsilon/2)$, 
and let $\tilde{\rho} : \mathbb{F}_2 \rightarrow \Sym(n)$ 
be an $(\delta',E_2 ' )$-almost-solution. 
Define $\overline{\rho} = \tilde{\rho} \circ \varphi : \mathbb{F}_1 
\rightarrow \Sym(n)$. 
Then since $\varphi(E_1) \subseteq E_2 '$, 
$\overline{\rho}$ is a $(\delta',E_1)$-almost-solution, 
and hence is $(\epsilon/2C)$-close to a solution $\rho : \mathbb{F}_1 \rightarrow \Sym(n)$ for $\ker(\pi_1)$  
(as $\delta' \leq \delta$). 
Thus $\rho \circ \psi : \mathbb{F}_2 \rightarrow \Sym(n)$ 
is a solution for $\ker(\pi_2)$ 
(note that in the case for which $\pi_1$ is an isomorphism, 
$\overline{\rho}$ is already a solution for $\ker(\pi_1) = \emptyset$, 
so we may take $\rho = \overline{\rho}$). 

For $x \in X_2$, $x$ and $(\varphi \circ \psi) (x)$ differ by 
an element of $Y_2 \subseteq E_2 '$, 
so since $\delta' < \epsilon/2$, 
$\tilde{\rho}$ is $\epsilon/2$-close to 
$\tilde{\rho} \circ (\varphi \circ \psi) = \overline{\rho} \circ \psi$, 
which is in turn $\epsilon/2$-close to $\rho \circ \psi$, 
since $\psi(X_2) \subseteq B_{X_1} (C)$. 
Since this holds for every $\epsilon > 0$, we conclude that 
$\pi_2$ satisfies the conditions of Definition \ref{StableDefn}, and moreover: 
\begin{equation} \label{ComparePresnIneq}
    F_{\Gamma} ^{\pi_2} (1/\epsilon) \leq \frac{\lVert E_2 ' \rVert}{\delta'}  
\leq (\frac{1}{\delta'}) \big( (C^2 + 1) \lvert X_2 \rvert + C \lVert E_1 \rVert \big).
\end{equation}
We distinguish two cases. 
If $\pi_1$ is an isomorphism, then by our choice above, $E_1 = \emptyset$ 
and $\delta' = \epsilon/2$, so (\ref{ComparePresnIneq}) becomes: 
$$F_{\Gamma} ^{\pi_2} (1/\epsilon) \leq C'/\epsilon$$
for $C' = (C^2 + 1) \lvert X_2 \rvert$. If $\pi_1$ is not an isomorphism, 
then we may apply Proposition \ref{StabLinearProp} (i) to obtain 
$2 C \delta / C_2 \leq  \epsilon $, so that $\delta' = c' \delta$ 
(for $c' = \min(1,C/C_2)$), and since $\lVert E_1 \rVert \geq 1$ (\ref{ComparePresnIneq}) yields: 
$$F_{\Gamma} ^{\pi_2} (1/\epsilon) \leq C'' (\frac{\lVert E_1 \rVert}{\delta})$$
(for $C'' = ((C^2 + 1) \lvert X_2 \rvert + C)/c'$), 
and taking the infimum over all such pairs $(E_1,\delta)$ the result follows. 
\end{proof}

\begin{coroll} \label{PresnEquivCoroll}
    Let $\pi_i : \mathbb{F}_i \twoheadrightarrow \Gamma$ ($i=1,2$) 
    be as in Proposition \ref{PresnEquivPropn}. 
    Then $F_{\Gamma} ^{\pi_1} \approx F_{\Gamma} ^{\pi_2}$. 
\end{coroll}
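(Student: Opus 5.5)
We need to show $F^{\pi_1}_\Gamma \preceq F^{\pi_2}_\Gamma$ and $F^{\pi_2}_\Gamma \preceq F^{\pi_1}_\Gamma$. By symmetry of roles it suffices to prove $F_{\Gamma}^{\pi_2} \preceq F_{\Gamma}^{\pi_1}$, that is, to find $C>0$ with $F^{\pi_2}_\Gamma(x) \leq C F^{\pi_1}_\Gamma(Cx) + Cx + C$ for all $x \geq 1$. Implicitly $\Gamma$ is stable (otherwise the functions are not defined), and by Proposition \ref{PresnEquivPropn} stability for one epimorphism transfers to the other, so both functions are finite-valued. One minor point to record first: both functions are nondecreasing. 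A pair $(\delta,E)$ which works for $\epsilon = 1/x$ also works for any larger $\epsilon$, so the infimum defining $F^{\pi}_\Gamma(x)$ is over a set that shrinks as $x$ grows.

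I split into the two cases of Proposition \ref{PresnEquivPropn}.

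\emph{Case $\pi_1$ is an isomorphism.} Part (i) of Proposition \ref{PresnEquivPropn} gives $F^{\pi_2}_\Gamma(x) \leq C_0 x$ for all $x \geq 1$. Since $F^{\pi_1}_\Gamma \geq 0$, this gives $F^{\pi_2}_\Gamma(x) \leq C F^{\pi_1}_\Gamma(Cx) + Cx + C$ for any $C \geq C_0$. This linear term in the definition of $\preceq$ is exactly what absorbs the degenerate case of Example \ref{FreeStabNoCost}.

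\emph{Case $\pi_1$ is not an isomorphism.} Part (ii) gives $F^{\pi_2}_\Gamma(x) \leq C_0 F^{\pi_1}_\Gamma(C_0 x)$ for all $x \geq 1$, which is already of the required form with $C = C_0$.

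The reverse inequality follows by exchanging the indices $1$ and $2$ and applying Proposition \ref{PresnEquivPropn} again, now with $\pi_2$ in the role of $\pi_1$. The two cases are handled identically, and the final constant is the maximum of those obtained.

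The only point needing care is the domain: the definition of $\preceq$ takes functions on $[0,\infty)$, whereas $F^\pi_\Gamma$ is defined on $[1,\infty)$. The domination inequality is only required for $x \geq 1$, and there $Cx \geq 1$ whenever $C \geq 1$, so this causes no trouble. We may also need to interpret "$(0,\infty)$-valued" loosely, since $F$ can vanish in the free case. There is no substantive obstacle.
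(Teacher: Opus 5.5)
Your proposal is correct and matches the paper's proof. Both arguments split on whether the source epimorphism is an isomorphism, and use Proposition \ref{PresnEquivPropn} (i) (with the linear term in $\preceq$ absorbing the bound $C_0x$) or (ii) in each direction. The only cosmetic difference is that the paper bounds $F_{\Gamma}^{\pi_i}$ for an isomorphism $\pi_i$ by noting it vanishes (Example \ref{FreeStabNoCost}), whereas you apply the proposition with the roles of $\pi_1$ and $\pi_2$ swapped, which works equally well.
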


\begin{proof}
    If one of $\pi_1$ or $\pi_2$ satisfy Definition \ref{StableDefn} 
    then by Proposition \ref{PresnEquivPropn}, both do 
    (and if this is not the case there is nothing to prove). 
    If one of $\pi_1$ or $\pi_2$ is an isomorphism, 
    then the conclusion follows from Example \ref{FreeStabNoCost} 
    and Proposition \ref{PresnEquivPropn} (i). 
    If not, then the conclusion follows from 
    Proposition \ref{PresnEquivPropn} (ii) (applied twice). 
\end{proof}

With Corollary \ref{PresnEquivCoroll} in hand, 
we may speak unambiguously about groups of \emph{polynomial stability growth}; 
\emph{exponential stability growth}, and so on. 

If $\Gamma$ is a stable group and has a finite group presentation $\langle X \mid R \rangle$ 
(that is, if $\ker(\pi)$ is normally generated by the finite set $R \subseteq \ker(\pi)$), 
then $\Gamma$ satisfies the following property, which is a priori stronger than 
Definition \ref{StableDefn}: for all $\epsilon > 0$ there exists $\delta > 0$ 
such that any $(\delta,R)$-almost solution is $\epsilon$-close to a solution for $\ker(\pi)$ 
(see Section 3 of \cite{BecLubTho}). 
Making this latter condition effective 
(just as the stability growth function makes Definition \ref{StableDefn} effective) 
is achieved in \cite{BecMos} via the introduction of the following function. 

\begin{defn} \label{StabRateDefn}
    Let $\langle X \mid R \rangle$ be a finite group presentation. 
    The \emph{stability-rate} of $\langle X \mid R \rangle$ is 
    the function $\SR_{\langle X \mid R \rangle} : (0,\lvert R \rvert ] \rightarrow [0,\lvert X \rvert]$ given by: 
    $$\SR_{\langle X \mid R \rangle} (\epsilon) = \sup \big\lbrace G_R (\rho) : \rho : \mathbb{F} \rightarrow \Sym (\Omega), \Omega \text{ a finite set, } L_R(\rho) \leq \epsilon \big\rbrace$$
    where $G_R (\rho)$ denotes the \emph{global defect}: 
    $$G_R (\rho) = \inf \big\lbrace \sum_{x \in X} d_{\Omega} (\rho(x),\phi(x)) 
    : \phi \text{ is a solution for $R$} \big\rbrace$$
    of $\rho$, and $L_R (\rho)$ is the \emph{local defect}: 
    $$L_R (\rho) = \sum_{r \in R} d_{\Omega} (\rho(r),\id_{\Omega}).$$
\end{defn}

To achieve better consistency with our terminology, we introduce the following 
variant of stability rate. 

\begin{notn}
Let $\langle X \mid R \rangle$ be a finite group presentation for the stable group $\Gamma$. 
Define $H_{\langle X \mid R \rangle} : [1,\infty) \rightarrow [0,\infty)$ such 
that $H(x)$ is the infimal value of $1/\delta$, as $\delta > 0$ ranges over all such values 
for which every $(\delta,R)$-almost solution is $x^{-1}$-close to a solution for $\ker(\pi)$. 
\end{notn}

It is immediate from definitions that if $H_{\langle X \mid R \rangle}$ 
grows slowly, then so too does $F_{\Gamma}$ (we may simply take $E=R$ 
in Definition \ref{StableDefn}). 
A converse inequality is less straightforward, 
since given a finite subset $E \subseteq \ker(\pi)$ one may require a very long 
word in the conjugates of elements of $R^{\pm 1}$ to express the elements of $E$. 
In this connection, recall that the \emph{Dehn function} 
$D_{\langle X \mid R \rangle} : \mathbb{N} \rightarrow \mathbb{N}$ 
of the finite presentation $\langle X \mid R \rangle$ is defined 
such that $D_{\langle X \mid R \rangle} (n)$ is the maximal area 
of an element $w \in \ker(\pi)$ satisfying $\lvert w \rvert \leq n$, 
where the \emph{area} of $w$ is the minimal $m \in \mathbb{N}$ 
such that $w = (u_1 r_1 u_1 ^{-1}) \cdots (u_m r_m u_m ^{-1})$ 
for some $u_i \in \mathbb{F}$ and $r_i \in R^{\pm 1}$. 
Note that we may extend the domain of $D_{\langle X \mid R \rangle}$ 
to $[1,\infty)$ by setting $D_{\langle X \mid R \rangle}(x) = D_{\langle X \mid R \rangle}(\lfloor x\rfloor)$. As such, $D_{\langle X \mid R \rangle} : [1,\infty) \rightarrow \mathbb{N}$ 
is nondecreasing and right-continuous. 

\begin{propn} \label{HvsStabGrowthProp}
Let $\Gamma$ be a stable group with finite presentation $\langle X \mid R \rangle$ 
(corresponding to the epimorphism $\pi : \mathbb{F} \rightarrow \Gamma$). 
If $\pi$ is not an isomorphism, then for all $x \geq 1$, 
\begin{equation} \label{HvsStabGrowthPropEqn}
\frac{1}{\lVert R \rVert}F_{\Gamma} ^{\pi} (x) \leq  H_{\langle X \mid R \rangle} (x) 
\leq F_{\Gamma} ^{\pi} (x) D_{\langle X \mid R \rangle}(F_{\Gamma} ^{\pi} (x)). 
\end{equation}
If $\pi$ is an isomorphism (so that $R = \emptyset$ or $\lbrace e_{\mathbb{F}} \rbrace$) 
then $H_{\langle X \mid R \rangle} (x) = 0$ for all $x \geq 1$. 
\end{propn}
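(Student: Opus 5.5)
The two inequalities in (\ref{HvsStabGrowthPropEqn}) are proved separately, and the degenerate case is dispatched first.

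If $\pi$ is an isomorphism, then every homomorphism $\mathbb{F} \rightarrow \Sym(\Omega)$ is already a solution for $\ker(\pi)$. So every $\delta>0$ qualifies in the definition of $H$, and the infimum of $1/\delta$ is $0$. Now assume $\pi$ is not an isomorphism, so that $R \nsubseteq \lbrace e \rbrace$ and $\lVert R \rVert \geq 1$.

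\emph{Lower bound.} Suppose $\delta$ is such that every $(\delta,R)$-almost-solution is $x^{-1}$-close to a solution for $\ker(\pi)$. Then the pair $(\delta,E)$ with $E=R \subseteq \ker(\pi)$ is admissible in Definition \ref{StabRate}. Hence $F_{\Gamma}^{\pi}(x) \leq \lVert R \rVert/\delta$. Taking the infimum over such $\delta$ gives $F_{\Gamma}^{\pi}(x) \leq \lVert R\rVert H(x)$.

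\emph{Upper bound.} Fix $x$ and take a pair $(\delta,E)$ admissible for $\epsilon = x^{-1}$. Set $F := \lVert E\rVert/\delta$, which may be chosen arbitrarily close to $F_{\Gamma}^{\pi}(x)$.
\begin{itemize}
\item Each $w \in E$ satisfies $\lvert w\rvert \leq \lVert E\rVert \leq F$, because $\delta \leq 1$. Hence $w$ has area at most $D := D_{\langle X\mid R\rangle}(F)$, and we write $w = \prod_{i=1}^{m}u_i r_i^{\pm1}u_i^{-1}$ with $m \leq D$.
\item For any homomorphism $\rho$, the set of points moved by $\rho(u r u^{-1})$ is the $\rho(u)$-image of the set moved by $\rho(r)$. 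Consequently $d(\rho(uru^{-1}),\id) = d(\rho(r),\id)$.
\item The Hamming distance is bi-invariant and satisfies the triangle inequality. Therefore
$$d(\rho(w),\id) \leq \sum_{i=1}^{m} d(\rho(r_i),\id) \leq m \max_{r\in R} d(\rho(r),\id).$$
\end{itemize}
Now set $\delta' = \delta/D$. Every $(\delta',R)$-almost-solution satisfies $d(\rho(w),\id) < \delta$ for all $w \in E$, so it is a $(\delta,E)$-almost-solution. By admissibility of $(\delta,E)$ it is therefore $x^{-1}$-close to a solution for $\ker(\pi)$. This gives
$$H(x) \leq \frac{D}{\delta} \leq \frac{\lVert E\rVert}{\delta}\, D_{\langle X\mid R\rangle}\!\left(\frac{\lVert E\rVert}{\delta}\right),$$
again using $\lVert E\rVert \geq 1$.

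If $E$ were contained in $\lbrace e\rbrace$, the bound could still be arranged. Alternatively, we could argue as in Proposition \ref{StabLinearProp} that admissibility forces $E$ to contain a nontrivial element, at least when $x$ is large. For small $x$ the bound still holds, since $\lVert E\rVert$ counts word lengths; the convention for $\lVert\{e\}\rVert$ needs a brief check here.

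\emph{Main obstacle.} The delicate step is passing from near-infimal pairs to the infimum $F_{\Gamma}^{\pi}(x)$ itself. The map $t \mapsto tD(t)$ is nondecreasing and right-continuous, because $D$ is. Take admissible pairs with $\lVert E\rVert/\delta \downarrow F_{\Gamma}^{\pi}(x)$. The corresponding bounds $\frac{\lVert E\rVert}{\delta} D\bigl(\frac{\lVert E\rVert}{\delta}\bigr)$ then converge to $F_{\Gamma}^{\pi}(x)\, D(F_{\Gamma}^{\pi}(x))$, and this yields the stated inequality.
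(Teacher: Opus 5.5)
Your proposal is correct and follows the paper's proof essentially step for step: $E=R$ for the lower bound, and for the upper bound you write each $w\in E$ as at most $D_{\langle X\mid R\rangle}$ conjugates of relators, use bi-invariance of $d_{\Omega}$ with $\delta'=\delta/D$, and pass to the infimum via right-continuity of $t\mapsto tD_{\langle X\mid R\rangle}(t)$. The paper bounds the area by $D_{\langle X\mid R\rangle}(\lVert E\rVert)$ rather than $D_{\langle X\mid R\rangle}(\lVert E\rVert/\delta)$, a slightly sharper intermediate estimate it reuses in Remark~\ref{BetterDehnBdRmrk}, though this is immaterial here; and the case you leave loose, an admissible pair with $E\subseteq\lbrace e\rbrace$, closes in one line, since then every homomorphism is $x^{-1}$-close to a solution, every $\delta'>0$ is admissible for $H$, and $H_{\langle X\mid R\rangle}(x)=0$ (the paper passes over this case silently).
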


\begin{proof}
If $\pi$ is an isomorphism, then for every $x \geq 1$, 
every homomorphism $\phi : \mathbb{F} \rightarrow \Sym(n)$ 
is a solution for $R$, hence is $(\delta,R)$-almost solution for all $\delta > 0$, 
and is $x^{-1}$-close to a solution for $\ker(\pi)$. 
Thus we may assume that $\pi$ is not injective. 

First, given $x \geq 1$, and $\delta > 0$ such that every $(\delta,R)$-almost solution 
is $x^{-1}$-close to a solution for $\ker(\pi)$, 
we may take $E=R$ in Definition \ref{StableDefn} and conclude: 
$$F_{\Gamma} ^{\pi} (x) \leq \lVert R \rVert / \delta$$
so that, taking the infimum of $\lVert R \rVert / \delta$ over all such $\delta$, 
we have the first half of (\ref{HvsStabGrowthPropEqn}). 

Conversely, given $x \geq 1$, suppose we have $\delta > 0$ and a finite subset $E \subseteq \ker(\pi)$ which satisfy Definition \ref{StableDefn} with respect to 
$\epsilon = x^{-1}$. Let $\delta' > 0$ and let $\rho : \mathbb{F} \rightarrow \Sym(\Omega)$ 
be a $(\delta',R)$-almost solution. 
Given $w \in E$, write $w = (u_1 r_1 u_1 ^{-1}) \cdots (u_m r_m u_m ^{-1})$ 
with $u_i \in \mathbb{F}$ and $r_i \in R^{\pm 1}$, 
and with $m \leq D_{\langle X \mid R \rangle} (\lvert w \rvert) \leq D_{\langle X \mid R \rangle} (\lVert E \rVert) $. Then: 
\begin{align*}
d_{\Omega} \big( \rho(w),\id_{\Omega} \big) 
& \leq \sum_{i=1} ^m d_{\Omega} \big( \rho(u_1 r_1 u_1 ^{-1}) \cdots \rho(u_i r_i u_i ^{-1}),\rho(u_1 r_1 u_1 ^{-1}) \cdots \rho(u_{i-1} r_{i-1} u_{i-1} ^{-1}) \big) \\
& =  \sum_{i=1} ^m d_{\Omega} \big( \rho(u_i) \rho(r_i) \rho(u_i) ^{-1},\id_{\Omega} \big) \\
& = \sum_{i=1} ^m d_{\Omega} \big( \rho(r_i) ,\id_{\Omega} \big) \\
& < \delta' D_{\langle X \mid R \rangle} (\lVert E \rVert)
\end{align*}
(by bi-invariance of the metric $d_{\Omega}$). 
Thus, provided $\delta' \leq \delta / D_{\langle X \mid R \rangle} (\lVert E \rVert)$, 
$\rho$ is also a $(\delta,E)$-almost solution, 
and hence is $\epsilon$-close to a solution for $\ker(\pi)$. We conclude: 
\begin{equation} \label{DehnIneq}
H_{\langle X \mid R \rangle} (x) \leq D_{\langle X \mid R \rangle} (\lVert E \rVert) / \delta
\end{equation}
and by monotonicity of $D_{\langle X \mid R \rangle}$, we may bound 
that right-hand side of (\ref{DehnIneq}) above by 
$D_{\langle X \mid R \rangle} (\lVert E \rVert/ \delta) (\lVert E \rVert / \delta)$, 
and since the function $(x \mapsto x D_{\langle X \mid R \rangle} (x))$ is nondecreasing 
and right-continuous, 
we may take the infimum over all such pairs $(\delta,E)$ 
to obtain the second half of (\ref{HvsStabGrowthPropEqn}). 
\end{proof}

\begin{rmrk} \label{BetterDehnBdRmrk}
    The bound (\ref{DehnIneq}) will commonly allow us to obtain 
    a stronger conclusion than (\ref{HvsStabGrowthPropEqn}). 
    Provided $D_{\langle X \mid R \rangle}$ satisfies: 
    $$D_{\langle X \mid R \rangle}(x)y \leq D_{\langle X \mid R \rangle}(xy)$$
    for all $x \geq C$ and $y \geq 1$ for some constant $C \geq 1$, 
    we may bound the right-hand side 
    of (\ref{DehnIneq}) above by $D_{\langle X \mid R \rangle} (\lVert E \rVert/ \delta)$, 
    for all $x$ larger than a constant. 
    Concluding as in the proof of Proposition \ref{HvsStabGrowthProp}, 
    we have: 
    \begin{equation}
      H_{\langle X \mid R \rangle} (x) \leq C D_{\langle X \mid R \rangle}(F_{\Gamma} ^{\pi} (x))
    \end{equation}
    for some constant $C>0$. 
\end{rmrk}

We may relate $H_{\langle X \mid R \rangle}$ to $\SR_{\langle X \mid R \rangle}$ 
as follows. 

\begin{lem} \label{SRvsHLem}
Let $\Gamma$ and  $\langle X \mid R \rangle$ be as in Proposition \ref{HvsStabGrowthProp}. 
Suppose $R \nsubseteq \lbrace e_{\mathbb{F}} \rbrace$. 
\begin{itemize}
    \item[(i)] There exists $C_1 > 0$ such that for all $x \geq C_1$ we have: 
    $$\SR_{\langle X \mid R \rangle} \big(\frac{1}{2 H_{\langle X \mid R \rangle}(x)}\big) \leq \frac{\lvert X \rvert}{x};$$ 

    \item[(ii)] There exists $c_2 > 0$ such that for all $\delta \in (0,c_2)$ we have: 
    $$H_{\langle X \mid R \rangle} \big( \frac{1}{\SR_{\langle X \mid R \rangle} (\lvert R \rvert \delta)} \big)
     \leq \frac{1}{\delta}.$$
    
\end{itemize}
\end{lem}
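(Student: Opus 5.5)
Both parts come from unwinding Definition \ref{StabRateDefn} and the definition of $H_{\langle X \mid R \rangle}$, using two comparisons. First, a solution for $\ker(\pi)$ is the same thing as a solution for $R$, because $R$ normally generates $\ker(\pi)$. Second, the local defect $L_R(\rho)$ controls every individual $d_\Omega(\rho(r),\id_\Omega)$ from above, and conversely
$$L_R(\rho) < \lvert R \rvert \delta \quad \text{whenever each } d_\Omega(\rho(r),\id_\Omega) < \delta.$$
The global defect plays the analogous role for the generators: $G_R(\rho)$ bounds each $d_\Omega(\rho(x),\phi(x))$ for a minimising $\phi$, and is bounded by $\lvert X \rvert$ times the maximum of these distances.

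For (i), I would first ensure that $H(x) := H_{\langle X \mid R \rangle}(x)$ is positive and finite for $x \geq C_1$.
\begin{itemize}
\item Finiteness holds because $\Gamma$ is stable and finitely presented: by the remark preceding Definition \ref{StabRateDefn}, for $\epsilon = x^{-1}$ there is some $\delta > 0$ such that every $(\delta,R)$-almost solution is $x^{-1}$-close to a solution.
\item Positivity comes from Proposition \ref{HvsStabGrowthProp} together with Proposition \ref{StabLinearProp}(ii), which give $H(x) \geq F_\Gamma^\pi(x)/\lVert R \rVert \geq x/(4\lVert R \rVert)$. This uses that $\pi$ is not an isomorphism, which follows from $R \nsubseteq \lbrace e \rbrace$.
\item Enlarging $C_1$ also ensures $1/(2H(x)) \leq \lvert R \rvert$, so the argument lies in the domain of $\SR_{\langle X \mid R \rangle}$.
\end{itemize}
Since $2H(x) > H(x)$, the definition of infimum gives an admissible $\delta$ with $1/\delta < 2H(x)$, that is, $\delta > 1/(2H(x))$. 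Now take any $\rho$ with $L_R(\rho) \leq 1/(2H(x))$. Each $d_\Omega(\rho(r),\id_\Omega)$ is at most $L_R(\rho) < \delta$, so $\rho$ is a $(\delta,R)$-almost solution. Hence $\rho$ is $x^{-1}$-close to a solution $\phi$, and this $\phi$ is in particular a solution for $R$. Therefore $G_R(\rho) \leq \sum_{x' \in X} d_\Omega(\rho(x'),\phi(x')) < \lvert X \rvert/x$. Taking the supremum over such $\rho$ gives (i).

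For (ii), fix $\delta$ and set $s = \SR_{\langle X \mid R \rangle}(\lvert R \rvert \delta)$. I would choose $c_2$ so that $0 < s \leq 1$ for all $\delta < c_2$, which makes $1/s$ a legitimate argument in $[1,\infty)$.
\begin{itemize}
\item For $s > 0$, I would reuse the padding construction from the proof of Proposition \ref{StabLinearProp}. Take some $\psi$ with $\psi(r) \neq \id$ for a relator $r$, and dilute it by copies together with fixed points. The resulting action has local defect at most $\lvert R \rvert \delta$, but it is not a solution for $R$ and so has positive global defect.
\item For $s \leq 1$, it suffices that $\SR(\lvert R \rvert \delta) \to 0$ as $\delta \to 0$, which is essentially what (i) says. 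Concretely, $\lvert R \rvert \delta \leq 1/(2H(x))$ once $\delta$ is small, so $s \leq \lvert X \rvert/x$.
\end{itemize}
Then let $\rho$ be any $(\delta,R)$-almost solution. It satisfies $L_R(\rho) < \lvert R \rvert \delta$, so $G_R(\rho) \leq s$. For a fixed finite $\Omega$ there are only finitely many candidate solutions $\phi$, so the infimum defining $G_R(\rho)$ is attained. This gives a solution $\phi$ with $d_\Omega(\rho(x'),\phi(x')) \leq s$ for every $x' \in X$. This would show that $\delta$ is admissible in the definition of $H(1/s)$, and hence $H(1/s) \leq 1/\delta$.

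The main obstacle is the strict-versus-nonstrict inequality in that last step. Closeness in Definition \ref{StableDefn} requires $d_\Omega(\rho(x'),\phi(x')) < s$, but we only obtain $\leq s$, and the supremum in $\SR$ may be attained. I would first try to absorb this by perturbing the parameters. One option is to work with a slightly larger $\delta'' > \delta$ in the $\SR$ argument. The other is to exploit the monotonicity of $H$: we get $H(y) \leq 1/\delta$ for all $y < 1/s$, and we would then need a right-continuity or limiting argument to pass to $y = 1/s$. If neither option works cleanly, the conclusion of (ii) would have to be weakened to hold with $1/s$ replaced by $1/s - \eta$ for every $\eta > 0$, or with harmless constants, which suffices for all asymptotic purposes.
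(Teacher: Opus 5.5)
Your route is the paper's. In (i) you take an admissible $\delta$ with $1/\delta<2H(x)$ and translate between defects and almost-solutions/closeness. In (ii) you run the chain $L_R(\rho)<\lvert R\rvert\delta\Rightarrow G_R(\rho)\le s\Rightarrow$ closeness to a solution.

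The differences are minor and in your favour:
\begin{itemize}
\item You prove $s>0$ by the padding construction of Proposition \ref{StabLinearProp}, and $s\le 1$ from (i) plus monotonicity of $\SR$. The paper instead cites Proposition A.5 and Definition 1.5 of \cite{BecMos}.
\item You check that $H(x)$ is finite and that $1/(2H(x))$ lies in the domain of $\SR$.
\item In (i) your estimate $d_\Omega(\rho(r),\id_\Omega)\le L_R(\rho)\le 1/(2H(x))<\delta$ avoids a slip in the paper. The paper claims that $L_R(\rho)\le y$ makes $\rho$ a $(y,R)$-almost solution, which fails when the whole local defect $y$ sits on one relator.
\end{itemize}

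The strictness problem you raise in (ii) is real, but it is not an idea the paper has and you lack. The paper's proof simply asserts that $G_R(\rho)\le y$ implies $\rho$ is $y$-close to a solution. That does not follow when $G_R(\rho)=y$, since the minimising solution may differ from $\rho$ on a single generator at distance exactly $y$.

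Your first two repairs do not work as stated:
\begin{itemize}
\item Enlarging $\delta$ only enlarges $\SR(\lvert R\rvert\delta)$.
\item Passing to $\delta'<\delta$ gives the strict bound only if $\SR$ is strictly increasing just to the left of $\lvert R\rvert\delta$, which is not known.
\item From $H(y)\le 1/\delta$ for all $y<1/s$ you can deduce $H(1/s)\le 1/\delta$ only if $H$ is \emph{left}-continuous at $1/s$, not right-continuous as you wrote. Monotonicity of $H$ gives the opposite inequality, and nothing supplies left-continuity.
\end{itemize}

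Your fallback is the correct fix. Shrink $c_2$ so that $s\le 1/2$. Then your argument proves rigorously that $H(1/(2s))\le 1/\delta$, because each $d_\Omega(\rho(x'),\phi(x'))\le s<2s$. Equivalently, $H(y)\le 1/\delta$ for every $y<1/s$. The lost constant is harmless for Example \ref{PolySREquiv}, which is how the lemma is used.
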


\begin{proof}
First, note that, for any $y > 0$ 
and any homomorphism $\rho : \mathbb{F} \rightarrow \Sym(\Omega)$, if $G_R (\rho) \leq y$, 
then $\rho$ is $y$-close to a solution for $R$, and hence for $\ker(\pi)$ 
(since $R$ normally generates $\ker(\pi)$), and conversely, 
if there is a solution for $\ker(\pi)$ which is $y$-close to $\rho$, 
then $G_R (\rho) \leq \lvert X \rvert y$. 
Similarly, if $L_R (\rho) \leq y$ then $\rho$ is a $(y,R)$-almost solution, 
and conversely, if $\rho$ is a $(y,R)$-almost solution, 
then $L_R (\rho) < \lvert R \rvert y$. 

Now, as $R \nsubseteq \lbrace e_{\mathbb{F}} \rbrace$ we have that $\pi$ is not an isomorphism. 
Write $H_{\langle X \mid R \rangle}(x) = h_x$. 
Letting $C_1>0$ be as in Proposition \ref{StabLinearProp}, 
we have $h_x \neq 0$ for all $x \geq C_1$ 
(by Proposition \ref{StabLinearProp} (ii) and Proposition \ref{HvsStabGrowthProp}). 
Thus, since $2 h_x > h_x$ we have that every $((\frac{1}{2 h_x}),R)$-almost-solution 
is $x^{-1}$-close to a solution for $\ker(\pi)$. 
Thus, for any $\rho : \mathbb{F} \rightarrow \Sym(\Omega)$ satisfying $L_R(\rho) \leq \frac{1}{2 h_x}$ 
we have $G_R(\rho) \leq \frac{\lvert X \rvert}{x}$, so that, taking the supremum over all such $\rho$, 
we obtain (i). 

For (ii), note that by Proposition A.5 of \cite{BecMos} we have $\SR_{\langle X \mid R \rangle}(\gamma) > 0$ 
for all $\gamma > 0$. Let $\delta > 0$ and set $x = \frac{1}{\SR_{\langle X \mid R \rangle}(\lvert R \rvert \delta)} \geq 1$. 
By stability of $\Gamma$, there exists $c_2 > 0$ such that for all $\delta \in (0,c_2)$ we have $x \geq 1$ 
(see Definition 1.5 of \cite{BecMos}). 
For such $\delta$, if $\rho : \mathbb{F} \rightarrow \Sym(\Omega)$ is a $(\delta,R)$-almost-solution, 
then $L_R(\rho) < \lvert R \rvert \delta$ so $G_R(\rho) \leq \SR_{\langle X \mid R \rangle}(\lvert R \rvert \delta)$, thus $\rho$ is $\SR_{\langle X \mid R \rangle}(\lvert R \rvert \delta)$-close to a solution, 
as required. 
\end{proof}

\begin{ex} \label{PolySREquiv}
    Let $\alpha \geq 1$. By Lemma \ref{SRvsHLem} we have: 
    \begin{itemize}
        \item[(i)] $H_{\langle X \mid R \rangle}(x) \ll x^{\alpha}$ 
        iff $\SR_{\langle X \mid R \rangle}(\epsilon) \ll \epsilon^{\frac{1}{\alpha}}$; 
        
        \item[(ii)] $H_{\langle X \mid R \rangle}(x) \gg x^{\alpha}$ 
        iff $\SR_{\langle X \mid R \rangle}(\epsilon) \gg \epsilon^{\frac{1}{\alpha}}$. 
        
    \end{itemize}
    (with implied constants depending on $\langle X \mid R \rangle$). 
\end{ex}

Using Proposition \ref{HvsStabGrowthProp} we can now complete the proof 
of Theorem \ref{BeckMoshThmIntro} from the Introduction. 

\begin{thm}
    For all $d \geq 2$ there exists $D= D(d) > 0$ such that:
    \begin{center}
    $x^{\frac{d}{2}} \ll_d F_{\mathbb{Z}^d} ^{\pi} (x) \ll_d x^D$
    \end{center}
    for all $x \geq 1$. 
\end{thm}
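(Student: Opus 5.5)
We work with the standard finite presentation $\langle X \mid R \rangle$ of $\mathbb{Z}^d$, where $X = \lbrace x_1 , \ldots , x_d \rbrace$ and $R = \lbrace [x_i,x_j] : 1 \leq i < j \leq d \rbrace$. By Corollary \ref{PresnEquivCoroll} the choice of $\pi$ only affects $F_{\mathbb{Z}^d}^{\pi}$ up to $\approx$, so it is enough to prove both bounds for this particular $\pi$. For $d \geq 2$ this $\pi$ is not an isomorphism, so Proposition \ref{HvsStabGrowthProp} and Lemma \ref{SRvsHLem} both apply. The plan is to transfer Becker and Mosheiff's polynomial bounds on $\SR_{\langle X \mid R \rangle}$ first to $H_{\langle X \mid R \rangle}$, using Example \ref{PolySREquiv}, and then to $F_{\mathbb{Z}^d}^{\pi}$, using Proposition \ref{HvsStabGrowthProp} together with the quadratic Dehn function of $\mathbb{Z}^d$.

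\emph{Upper bound.} Becker and Mosheiff prove that $\mathbb{Z}^d$ is polynomially stable. Concretely, there is $D = D(d)$ with $\SR_{\langle X \mid R \rangle}(\epsilon) \ll_d \epsilon^{1/D}$. By Example \ref{PolySREquiv}(i) this gives $H_{\langle X \mid R \rangle}(x) \ll_d x^{D}$. The left-hand inequality of (\ref{HvsStabGrowthPropEqn}) then yields $F_{\mathbb{Z}^d}^{\pi}(x) \leq \lVert R \rVert H_{\langle X \mid R \rangle}(x) \ll_d x^D$. This direction is a direct chain of earlier results. The only care needed is for small $x$: Lemma \ref{SRvsHLem} holds only for $x \geq C_1$, and for $x$ below that constant the bound follows because $F$ is nondecreasing and we may enlarge the implied constant.

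\emph{Lower bound.} Becker and Mosheiff also show that $\SR_{\langle X \mid R \rangle}(\epsilon) \gg_d \epsilon^{1/d}$, by exhibiting almost-actions of $\mathbb{Z}^d$ on large boxes with small local defect but large global defect. By Example \ref{PolySREquiv}(ii) this gives $H_{\langle X \mid R \rangle}(x) \gg_d x^{d}$. Next we use that the Dehn function of $\mathbb{Z}^d$ with respect to this presentation is quadratic, $D_{\langle X \mid R \rangle}(n) \asymp n^2$. Up to a constant this satisfies the hypothesis $D(x)y \leq D(xy)$ for $x \geq C$, $y \geq 1$ of Remark \ref{BetterDehnBdRmrk}, since $x^2 y \leq x^2 y^2$. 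If the floor in the definition of $D$ causes trouble, we replace $D$ by a comparable genuinely quadratic majorant $Cx^2$ in (\ref{DehnIneq}). The Remark then gives $H_{\langle X \mid R \rangle}(x) \leq C\, (F_{\mathbb{Z}^d}^{\pi}(x))^2$. Combining with $H \gg x^d$ yields $F_{\mathbb{Z}^d}^{\pi}(x) \gg_d x^{d/2}$ for large $x$, and for small $x$ we extend using Proposition \ref{StabLinearProp}(ii) or by adjusting the constant.

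The main obstacle is the lower bound: we must correctly extract from \cite{BecMos} the precise exponent in the lower bound on the stability rate, and check that the Dehn-function loss costs only a square root. If we used the cruder bound $H \leq F \cdot D(F) \asymp F^3$ from Proposition \ref{HvsStabGrowthProp}, we would only obtain $F \gg x^{d/3}$; obtaining the sharper exponent $d/2$ requires Remark \ref{BetterDehnBdRmrk}.
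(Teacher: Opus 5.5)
Your proposal is correct and follows the paper's proof step for step. The Becker--Mosheiff bounds on $\SR$ become bounds on $H_{\langle X \mid R \rangle}$ via Example~\ref{PolySREquiv}. These give the upper bound on $F_{\mathbb{Z}^d}^{\pi}$ through the left half of (\ref{HvsStabGrowthPropEqn}), and the lower bound through Remark~\ref{BetterDehnBdRmrk} with the quadratic Dehn function. Your reduction to the standard presentation and your quadratic majorant for $D_{\langle X \mid R \rangle}$ are harmless extra care.

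One small caveat, which the paper's statement shares: the lower bound can only hold for $x$ beyond some constant, so ``adjusting the constant'' cannot extend it to all $x \geq 1$. For the standard presentation, $F_{\mathbb{Z}^d}^{\pi}(1)=0$: take $E=\emptyset$, and note that every tuple is $1$-close to a tuple of powers of a single $\lvert\Omega\rvert$-cycle.
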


\begin{proof}
    Fix $\langle X_d \mid R_d \rangle$ a finite presentation for $\Gamma_d \cong \mathbb{Z}^d$ 
    with associated epimorphism $\pi_d : \mathbb{F} \rightarrow \Gamma_d$. 
    By Theorem 1.16 of \cite{BecMos} there exists $D= D(d) > 0$ such that 
    $\SR_{\langle X_d \mid R_d \rangle}(\epsilon) \ll_d \epsilon ^{\frac{1}{D}}$. 
    By Example \ref{PolySREquiv} (i) we have $H_{\langle X \mid R \rangle}(x) \ll_d x^D$ 
    so that by Proposition \ref{HvsStabGrowthProp}, $F_{\Gamma_d} ^{\pi} (x) \ll_d x^D$. 

    For the converse inequality, by Theorem 1.17 of \cite{BecMos} we have 
    $\SR_{\langle X_d \mid R_d \rangle}(\epsilon) \gg_d \epsilon ^{\frac{1}{d}}$ so that 
    by Example \ref{PolySREquiv} (ii) we have $H_{\langle X_d \mid R_d \rangle}(x) \gg_d x^d$. 
    The group $\Gamma_d$ has quadratic Dehn function $D_{\langle X_d \mid R_d \rangle} (x) \approx x^2$ 
    (see for instance Corrolary 6.2 of \cite{Pit}), 
    so that the conditions of Remark \ref{BetterDehnBdRmrk} are satisfied, 
    hence $F_{\Gamma_d} ^{\pi} (x)^2 \gg_d x^d$. 
\end{proof}

\subsection{LEF growth and residual finiteness growth} \label{LEFRFPrelimSubsect}

For $\Gamma$ and $\Delta$ groups and $A \subseteq \Gamma$ a subset, 
a function $\phi : A \rightarrow \Delta$ is called a 
\emph{partial homomorphism} if, for all $g,h \in A$, 
if $gh\in A$ then $\phi(gh)=\phi(g)\phi(h)$. 
An injective partial homomorphism shall also be called 
a \emph{local embedding}. 
The group $\Gamma$ is \emph{locally embeddable into finite groups} 
(or \emph{LEF}) if, for every finite subset $A \subseteq \Gamma$, 
there exists a finite group $\Delta$ and a local embedding 
$A \rightarrow \Delta$. 
If $\Gamma$ is generated by a finite set $S$, 
then a prototypical finite subset of $\Gamma$ 
is the closed ball $B_S (l)$ of radius $l$ around the identity 
in the word-metric induced on $\Gamma$ by $S$. 
The \emph{LEF growth} function 
$\mathcal{L}_{\Gamma} ^S : \mathbb{N} \rightarrow \mathbb{N}$ 
of the finitely generated LEF group $\Gamma$ 
associated to the finite generating set $S$ is then 
defined such that for $l \in \mathbb{N}$, 
$\mathcal{L}_{\Gamma} ^S (l)$ is the minimal order of a finite 
group $\Delta$ affording a local embedding $B_S(l) \rightarrow \Delta$. 

The group $\Gamma$ is \emph{residually finite} if, 
for every finite subset $A \subseteq \Gamma$, 
there is a finite group $\Delta$ and a homomorphism $\Gamma\rightarrow\Delta$ 
whose restriction to $A$ is injective. 
In the case for which the residually finite group 
$\Gamma$ is generated by the finite set $S$, 
we define $\mathcal{R}_{\Gamma} ^S (l) \in \mathbb{N}$ 
to be the minimal order of such a finite group $\Delta$ 
in the case for which $A = B_S (l)$, 
leading to the definition of 
the \emph{full residual finiteness growth} function 
$\mathcal{R}_{\Gamma} ^S : \mathbb{N} \rightarrow \mathbb{N}$. 

It is immediate from these definitions 
that every residually finite group is LEF. 
If $\Gamma$ is generated by the finite set $S$ 
and $\Gamma$ is residually finite (respectively LEF) 
then $\mathcal{R}_{\Gamma} ^S$ (respectively $\mathcal{L}_{\Gamma} ^S$) 
is a nondecreasing function which is unbounded iff $\Gamma$ is infinite. 
Moreover for $\Gamma$ residually finite we have 
$\mathcal{L}_{\Gamma} ^S (l) \leq \mathcal{R}_{\Gamma} ^S (l)$ 
for all $l \in \mathbb{N}$. 
In some cases we have 
$\mathcal{L}_{\Gamma} ^S (l) = \mathcal{R}_{\Gamma} ^S (l)$ 
for all $l$ sufficiently large 
(for instance this occurs whenever $\Gamma$ is a finitely \emph{presented} 
group, see Corollary 2.21 of \cite{Brad}). 
Our interest shall be in the extreme opposite case: 
that for which $\mathcal{R}_{\Gamma} ^S$ 
grows much faster than $\mathcal{L}_{\Gamma} ^S$. 

\subsection{Making instability effective}

Suppose now that $\Gamma$ is LEF, and write $\pi(X)=S$. 
Suppose also that $F : (0,\infty) \rightarrow (0,\infty)$ satisfies 
$F_{\Gamma} ^{\pi} (1/\epsilon) \leq F(1/\epsilon)$. 
Then there exists a pair $(\delta,E)$ satisfying 
the conditions of Definition \ref{StableDefn} with 
$\lVert E \rVert \leq \lVert E \rVert / \delta \leq 2 F(1/\epsilon)$ 
(since $\delta \leq 1$). 
In particular, every element $r \in E$ satisfies 
$\lvert r \rvert \leq L(\epsilon ) := \max \big( 2 F(1/\epsilon), 1/2\epsilon \big)$. 
Now let $\psi : B_S \big( L(\epsilon ) \big) \rightarrow Q$ 
be a local embedding into a finite group $Q$. 
Composing with the regular representation of $Q$, 
we can replace $Q$ by $\Sym(\Omega)$ for some finite set $\Omega$ 
with $\lvert \Omega \rvert = \lvert Q \rvert$. 
and assume that for all $e \neq g \in B_S \big( L(\epsilon ) \big)$, 
$d_{\Omega} (\psi(g),\id_{\Omega}) = 1$. 
Then $x \mapsto (\psi \circ \pi)(x)$ defines a homomorphism 
$\rho : \mathbb{F} \rightarrow \Sym(\Omega)$ 
such that for every $w \in B_X \big( L(\epsilon ) \big)$, 
$\rho (w) = (\psi \circ \pi)(w)$. 
In particular, for all $r \in E$, $\rho(r)=\id_{\Omega}$, 
so that by stability, there exists a homomorphism 
$\phi : \Gamma \rightarrow \Sym(\Omega)$ such that 
$d_{\Omega} (\rho(x),(\phi \circ \pi)(x))=d_{\Omega} ((\psi \circ \pi)(x),(\phi \circ \pi)(x))<\epsilon$ for all $x \in X$. 

Consequently, for all $w \in B_X (1/2\epsilon)$, 
$d_{\Omega} ((\psi \circ \pi)(w),(\phi \circ \pi)(w))\leq 1/2$, 
and since $\pi \big( B_X(1/2\epsilon) \big) = B_S (1/2\epsilon)$, 
we have $\phi(g)\neq \id_{\Omega}$ for all $e \neq g \in B_S (1/2\epsilon)$, 
and we conclude that $\phi : \Gamma \rightarrow \Sym(\Omega)$ 
is a homomorphism whose restriction to $B_S (1/2\epsilon)$ is injective. 
It follows that: 
\begin{center}
$\lvert Q \rvert ! = \lvert \Sym(\Omega) \rvert \geq \mathcal{R}_{\Gamma} ^S (1/2\epsilon)$, 
\end{center}
where $\mathcal{R}_{\Gamma} ^S$ is the full 
residual finiteness growth function of $\Gamma$. 
Meanwhile, our only assumption on $Q$ was 
that it should admit an injective partial homomorphism 
from $ B_S \big( L(\epsilon ) \big)$. 
Taking $Q$ to be of minimal order among all such finite groups, we have: 
\begin{center}
$\lvert Q \rvert = \mathcal{L}_{\Gamma} ^S \big( L(\epsilon ) \big)$, 
\end{center}
where $\mathcal{L}_{\Gamma} ^S$ is the LEF growth function of $\Gamma$. 
Writing $\epsilon = 1/2l$, and since the above is valid for 
\emph{any} function $F$ satisfying $F_{\Gamma} ^{\pi} (1/\epsilon) \leq F(1/\epsilon)$, 
we can summarise our conclusion as follows. 

\begin{propn} \label{LEFStabProp}
Suppose $\Gamma$ is a finitely generated stable LEF group. 
Then ($\Gamma$ is residually finite and) for all $l \in \mathbb{N}$, 
\begin{equation} \label{LEFStabIneq}
\max \big\lbrace \mathcal{L}_{\Gamma} ^S \big( F_{\Gamma} ^{\pi} (2l) \big)!,\mathcal{L}_{\Gamma} ^S \big( l\big)! \big\rbrace
\geq \mathcal{R}_{\Gamma} ^S (l)
\end{equation}
\end{propn}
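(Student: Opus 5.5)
The plan is to formalise the discussion immediately preceding the statement, making the choice of parameters precise. Fix $l \in \mathbb{N}$ and set $\epsilon = 1/2l$, so that $1/2\epsilon = l$ and $1/\epsilon = 2l$. I will take any $F' > F_{\Gamma}^{\pi}(2l)$, or if the infimum is attained simply $F' = F_{\Gamma}^{\pi}(2l)$. By Definition \ref{StabRate} there is a pair $(\delta,E)$ satisfying Definition \ref{StableDefn} for this $\epsilon$ with $\lVert E\rVert/\delta \le F'$. Since $\delta\le 1$, every $r\in E$ has $\lvert r\rvert \le \lVert E \rVert \le F'$.

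Put $L = \max(F', l)$ and take a finite group $Q$ of minimal order admitting a local embedding $\psi : B_S(L) \to Q$, so that $\lvert Q\rvert = \mathcal{L}_{\Gamma}^S(L)$. Composing with the left regular representation, view $Q \le \Sym(\Omega)$ with $\lvert\Omega\rvert = \lvert Q\rvert$. Nontrivial elements then act fixed-point-freely, so $d_\Omega(\psi(g),\id_\Omega)=1$ for every $e\ne g\in B_S(L)$. Define $\rho:\mathbb{F}\to\Sym(\Omega)$ by $\rho(x) = \psi(\pi(x))$ for $x\in X$.

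The key check is that $\rho(w)=\psi(\pi(w))$ for all $w\in B_X(L)$. I will prove this by induction on word length: for each prefix $w'$ of $w$, the element $\pi(w')$ lies in $B_S(L)$, and the partial-homomorphism property applies to $\pi(w') = \pi(w'')\pi(x^{\pm1})$. Inverses need a small remark: $\psi(e)=\id$ follows from $\psi(e\cdot e)=\psi(e)^2$, and then $\psi(s^{-1})=\psi(s)^{-1}$. Consequently $\rho(r)=\psi(e)=\id_\Omega$ for all $r\in E$, so $\rho$ is a $(\delta,E)$-almost-solution. Stability then gives $\phi:\Gamma\to\Sym(\Omega)$ with $d_\Omega(\rho(x),\phi\pi(x))<\epsilon$ for each $x\in X$.

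Next, for $w\in B_X(l)$, bi-invariance and the triangle inequality (telescoping as in Proposition \ref{HvsStabGrowthProp}) give $d_\Omega(\rho(w),\phi\pi(w)) < \lvert w\rvert\epsilon \le 1/2$. If $e\ne g\in B_S(l)$, write $g=\pi(w)$ with $\lvert w\rvert\le l$. Then $\rho(w)=\psi(g)$ moves every point, so $\phi(g)$ moves more than half the points, and in particular $\phi(g)\ne\id$. Hence $\phi$ restricted to $B_S(l)$ is injective, and $\phi(\Gamma)\le\Sym(\Omega)$ is a finite quotient witnessing $\mathcal{R}_\Gamma^S(l)\le \lvert\Omega\rvert! = \mathcal{L}_\Gamma^S(\max(F',l))!$. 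Since $\mathcal{L}$ is nondecreasing, this equals $\max\{\mathcal{L}(F')!,\mathcal{L}(l)!\}$.

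Residual finiteness follows immediately, since $l$ was arbitrary and every finite subset lies in some ball $B_S(l)$.

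The main obstacle I expect is passing from $F'$ to $F_\Gamma^\pi(2l)$ itself, since the infimum need not be attained. Here I will use that $\mathcal{L}_\Gamma^S$ is integer-valued on integer radii, with $B_S(t)=B_S(\lfloor t\rfloor)$. Choosing $F'$ with $\lfloor F'\rfloor = \lfloor F_\Gamma^\pi(2l)\rfloor$, which is possible by taking $F'$ slightly above the infimum unless the infimum is an integer not attained, gives the stated bound. In that remaining edge case I will read $\mathcal{L}(F_\Gamma^\pi(2l))$ via right-continuity, as with the Dehn function convention earlier. If that fails, I will accept the harmless loss $\mathcal{L}(F_\Gamma^\pi(2l)+1)$, noting that the application only needs the inequality up to such a shift.
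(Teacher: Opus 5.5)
Your construction is the same as the paper's: a minimal-order LEF target $Q$, its regular representation, $\rho$ as an exact solution for $E$, stability, and the telescoping Hamming bound with $\epsilon = 1/2l$. However, one inference fails as written. From ``$\phi(g) \neq \id_{\Omega}$ for every $e \neq g \in B_S(l)$'' you conclude that $\phi$ is injective on $B_S(l)$. For a homomorphism, injectivity on $B_S(l)$ means $\ker(\phi) \cap B_S(l)^{-1}B_S(l) = \ker(\phi) \cap B_S(2l) = \lbrace e \rbrace$. Knowing only $\ker(\phi) \cap B_S(l) = \lbrace e \rbrace$ gives injectivity on $B_S(\lfloor l/2 \rfloor)$. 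For example, $\mathbb{Z} \rightarrow \mathbb{Z}/(l+1)\mathbb{Z}$ kills no nonzero element of $[-l,l]$ but identifies $l$ with $-1$. Rerunning your argument on $B_S(2l)$ would need $\psi$ defined on $B_S(2l)$, which replaces $\mathcal{L}_{\Gamma}^S(l)$ by $\mathcal{L}_{\Gamma}^S(2l)$ in the statement. The paper's own write-up makes the same leap.

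The repair uses exactly your parameters. Take $g_1 \neq g_2$ in $B_S(l)$. Then $\psi(g_1) \neq \psi(g_2)$ in $Q$, so under the regular representation they disagree at \emph{every} point, i.e.\ $d_{\Omega}(\psi(g_1),\psi(g_2)) = 1$. Your telescoping bound, together with $\rho(w_i) = \psi(g_i)$, gives $d_{\Omega}(\psi(g_i),\phi(g_i)) < 1/2$ strictly for $i=1,2$. Hence $d_{\Omega}(\phi(g_1),\phi(g_2)) > 0$. This pairwise use of the regular representation is what the threshold $1/2$, and hence the choice $\epsilon = 1/2l$, is really for.

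On the infimum, your floor argument improves on the paper. The paper picks $(\delta,E)$ with $\lVert E \rVert/\delta \leq 2F(1/\epsilon)$, so it literally yields $\mathcal{L}_{\Gamma}^S(2F_{\Gamma}^{\pi}(2l))$ and then drops the factor $2$ in the statement. But the edge case you worry about does not exist. Set $t = F_{\Gamma}^{\pi}(2l)$ and choose a valid pair with $\lVert E \rVert/\delta < \lfloor t \rfloor + 1$. Since $\lVert E \rVert$ is an integer and $\delta \leq 1$, every $r \in E$ has $\lvert r \rvert \leq \lVert E \rVert \leq \lfloor t \rfloor$. This holds regardless of whether $t$ is an integer or is attained, so no right-continuity convention or shift by $1$ is needed. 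With the injectivity fix above, your argument proves the statement exactly as stated.
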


In other words, the better the stability growth function of $\Gamma$, 
the more tightly $\mathcal{R}_{\Gamma} ^S$ 
is controlled by $\mathcal{L}_{\Gamma} ^S$. 
For example, in polynomially stable groups, we get the following bound. 

\begin{coroll}
Let $\Gamma$ be a finitely generated stable LEF group, 
and suppose there exists $C\geq 1$ such that for all $\epsilon > 0$: 
\begin{center}
$F_{\Gamma} ^{\pi} (x) \leq Cx^C$. 
\end{center}
Then there exists $C' > 0$ such that for all $l \in \mathbb{N}$, 
\begin{center}
$\mathcal{L}_{\Gamma} ^S \big( C' l^C \big)! \geq \mathcal{R}_{\Gamma} ^S (l)$. 
\end{center}
\end{coroll}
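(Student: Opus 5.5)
This is a direct consequence of Proposition \ref{LEFStabProp}: we substitute the polynomial bound on $F_{\Gamma}^{\pi}$ and use that $\mathcal{L}_{\Gamma}^S$ is nondecreasing. Fix $l \in \mathbb{N}$. By hypothesis,
$$F_{\Gamma}^{\pi}(2l) \leq C (2l)^C = 2^C C\, l^C,$$
so we set $C' = 2^C C$, which is at least $1$ since $C \geq 1$.

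We then absorb both terms of the maximum in (\ref{LEFStabIneq}) into the single quantity $\mathcal{L}_{\Gamma}^S(C' l^C)!$. Because $\mathcal{L}_{\Gamma}^S$ is nondecreasing (extended to real arguments by $\mathcal{L}_{\Gamma}^S(y) = \mathcal{L}_{\Gamma}^S(\lfloor y \rfloor)$, as was done for the Dehn function) and factorial is monotone on $\mathbb{N}$, we get
$$\mathcal{L}_{\Gamma}^S\big(F_{\Gamma}^{\pi}(2l)\big)! \leq \mathcal{L}_{\Gamma}^S(C' l^C)!.$$
For the second term, note that $C' l^C \geq l$ since $C' \geq 1$, $C \geq 1$ and $l \geq 1$. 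Hence $\mathcal{L}_{\Gamma}^S(l)! \leq \mathcal{L}_{\Gamma}^S(C' l^C)!$ as well. Combining these with (\ref{LEFStabIneq}) gives $\mathcal{L}_{\Gamma}^S(C' l^C)! \geq \mathcal{R}_{\Gamma}^S(l)$.

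The only points needing care are bookkeeping. First, the statement is needed for all $l$, so we use that the polynomial bound is assumed for all $x$ (the hypothesis "for all $\epsilon>0$" should be read as "for all $x \geq 1$"). Second, the case $l = 0$ is trivial: $B_S(0) = \{e\}$, so $\mathcal{R}_{\Gamma}^S(0) = 1$. Third, $F_{\Gamma}^{\pi}(2l)$ need not be an integer, so the rounding convention for $\mathcal{L}_{\Gamma}^S$ at real arguments must be fixed consistently with Proposition \ref{LEFStabProp}. None of these is a real obstacle; the content is entirely in Proposition \ref{LEFStabProp}.
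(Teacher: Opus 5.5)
Your proposal is correct and is the argument the paper intends. The paper states the corollary without proof, as an immediate consequence of Proposition~\ref{LEFStabProp}: you bound $F_{\Gamma}^{\pi}(2l) \leq 2^C C\, l^C$, then absorb both terms of the maximum in (\ref{LEFStabIneq}) into $\mathcal{L}_{\Gamma}^S(C' l^C)!$ using monotonicity of $\mathcal{L}_{\Gamma}^S$ and $C' l^C \geq l$. Your bookkeeping remarks are appropriate: reading the hypothesis as holding for all $x \geq 1$, fixing the floor convention at real arguments, and treating the trivial case $l=0$ separately.
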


Therefore to construct inefficiently stable groups, 
it is sufficient to find finitely generated LEF stable groups $\Gamma$ 
for which $\mathcal{R}_{\Gamma} ^S$ grows much faster than $\mathcal{L}_{\Gamma} ^S$. 

\begin{rmrk}
If $\pi : \mathbb{F} \twoheadrightarrow \Gamma$ is an isomorphism, 
then $F_{\Gamma} ^{\pi}$ is constant and $\mathcal{L}_{\Gamma} ^S = \mathcal{R}_{\Gamma} ^S$ 
grows exponentially (see \cite{Brad} Example 2.26, and using the fact that 
$\mathbb{F}$ is isomorphic to a subgroup of $\GL_2 (\mathbb{Z})$). 
If $\pi$ is \emph{not} an isomorphism, then by Proposition \ref{StabLinearProp} (ii) 
we have $F_{\Gamma} ^{\pi} (x) \geq x/4$ for all $x$ larger than some constant $C_1$. 
Thus we may simplify (\ref{LEFStabIneq}) to: 
$$\mathcal{L}_{\Gamma} ^S \big( F_{\Gamma} ^{\pi} (4l) \big)!\geq \mathcal{R}_{\Gamma} ^S (l)$$
for all $l$ sufficiently large. 
\end{rmrk}

\subsection{Quantitative stability and soficity}

The content of this Subsection is not required for the results of 
the subsequent Sections. 
As noted in Proposition \ref{LEFStabProp}, every stable LEF group is residually finite. 
More generally, it is well-known that residual finiteness 
holds for every stable \emph{sofic} group. 

\begin{defn} \label{SoficDefn}
The group $\Gamma$ is sofic if, for every finite 
subset $A \subseteq \Gamma$ and every $\epsilon > 0$, 
there exists a finite set $\Omega$ and a function $\phi : A \rightarrow \Sym(\Omega)$ 
satisfying for all $g,h \in A$: 
\begin{itemize}
    \item[(i)] If $gh \in A$ then $d_{\Omega} (\phi(gh),\phi(g)\phi(h)) < \epsilon$; 

    \item[(ii)] If $g \neq e$ then $d_{\Omega} (\phi(g),\id_{\Omega}) > 1-\epsilon$; 

    \item[(iii)] If $e \in A$ then $\phi(e) = \id_{\Omega}$. 
    
\end{itemize}
Such a map $\phi$ is called an \emph{$(A,\epsilon)$-almost representation} 
of $\Gamma$ on $\Omega$. 
\end{defn}

Arzhantseva and Cherix \cite{ArzChe} introduce a quantitative notion of soficity. 

\begin{defn}
    Let $\Gamma$ be a sofic group generated by a finite set $S$. 
    The \emph{sofic profile} $\mathcal{D}_{\Gamma} ^S : \mathbb{N} \rightarrow \mathbb{N}$ 
    is defined such that $\mathcal{D}_{\Gamma} ^S (l)$ is the minimum 
    value of $\lvert \Omega \rvert$ for which there exists 
    a $(B_S(l),1/l)$-almost representation
    $\phi : B_S(l) \rightarrow \Sym(\Omega)$. 
\end{defn}

\begin{ex}
    If $Q$ is a finite group; $\psi : B_S(l) \rightarrow Q$ is a local embedding, 
    and $\rho : Q \rightarrow \Sym(Q)$ is the regular representation of $Q$, 
    then $\rho \circ \psi$ is a $(B_S(l),\epsilon)$-representation for all $\epsilon > 0$. 
    Thus $\mathcal{D}_{\Gamma} ^S (l) \leq \mathcal{L}_{\Gamma} ^S (l)$ 
    (see also \cite{ArzChe} Subsection 4.3). 
\end{ex}

We may prove a generalisation of Proposition \ref{LEFStabProp} 
using sofic profile in place of LEF growth. 

\begin{propn}
Suppose that $\pi : \mathbb{F} \twoheadrightarrow \Gamma$ is not an isomorphism, 
and that $\Gamma$ is sofic. 
Then there exists $C>0$ such that for all $l \geq C$ we have: 
\begin{equation} \label{SoficProfStabRFIneq}
    \mathcal{D}_{\Gamma} ^{\pi(X)}\big(2 F_{\Gamma} ^X(10l)\big)! \geq \mathcal{R}_{\Gamma} ^{\pi(X)} (l). 
\end{equation}
\end{propn}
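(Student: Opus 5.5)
Fix $\epsilon = 1/(10l)$ and mimic the argument preceding Proposition \ref{LEFStabProp}, with the local embedding replaced by an almost representation. By Proposition \ref{StabLinearProp}(ii), $F_{\Gamma}^{\pi}(10l) \geq 10l/4$ for $l \geq C$. Choose a pair $(\delta,E)$ satisfying Definition \ref{StableDefn} for $\epsilon$, with $\lVert E\rVert/\delta \leq 2F_{\Gamma}^{\pi}(10l) =: L$. Then every $r\in E$ has $\lvert r\rvert\leq L$. By Proposition \ref{StabLinearProp}(i), and since $\lVert E\rVert \geq 1$, we also have $\delta \geq 1/L$. Moreover $L \geq 5l \geq l$.

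Take a $(B_S(L),1/L)$-almost representation $\psi : B_S(L)\to\Sym(\Omega)$ with $\lvert\Omega\rvert = \mathcal{D}_\Gamma^S(L)$, where $S=\pi(X)$, and define $\rho:\mathbb{F}\to\Sym(\Omega)$ by $\rho(x)=\psi(\pi(x))$. The main step is to control $\rho(w)$ for words $w$ of length at most $L$. Inducting along prefixes of $w$ and using bi-invariance of $d_\Omega$ together with condition (i), we get $d_\Omega(\rho(w),\psi(\pi(w)))\leq(\lvert w\rvert-1)/L$ (condition (iii) handles the identity).

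Applied to $r\in E$, this gives $d_\Omega(\rho(r),\id)<1$, which is too weak: we need it to be $<\delta$. This is the main obstacle. The fix I would try is to pick the almost representation at a finer precision $\eta$ with $\eta L<\delta$. That costs a larger $\Omega$, and the inequality as stated uses exactly $\mathcal{D}(2F(10l))$. So instead I expect to use the freedom in the constants: presumably the author exploits that $\delta\ge 1/L$ is not what is needed, and rather bounds the errors additively. One route is to use a precision-amplification for sofic approximations: taking tensor powers of $\psi$ on $\Omega^k$ reduces the defect in (ii) but not in (i), so that is not enough. Failing that, I would accept the cruder intermediate statement, with $\mathcal{D}$ evaluated at $\max(L,L/\delta)$, and then observe that $L/\delta\leq 2\lVert E\rVert/\delta^2$. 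I would then try to recover the exact form by rechoosing $(\delta,E)$ so that $E$ consists of a single word of length $\lVert E\rVert$, bounding $d(\rho(r),\id)\le \lvert r\rvert\eta$ with $\eta=1/(2F)$ and $\lvert r\rvert/\delta \le F$. Then $\lvert r\rvert\eta\le\delta/2<\delta$. Indeed, with $\eta=1/L$ and $L=2F\geq 2\lvert r\rvert/\delta$, we get $\lvert r\rvert/L\leq\delta/2$, so this works directly. So $\rho$ is a $(\delta,E)$-almost solution, and stability gives a homomorphism $\phi:\Gamma\to\Sym(\Omega)$ with $d_\Omega(\rho(x),\phi\pi(x))<\epsilon$ for $x \in X$.

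Finally, for $e\neq g\in B_S(l)$ written as $\pi(w)$ with $\lvert w\rvert\leq l$, the Hamming triangle inequality gives $d(\phi(g),\rho(w))<l\epsilon=1/10$. The estimate above gives $d(\rho(w),\psi(g))\leq l/L\leq 1/5$, and condition (ii) gives $d(\psi(g),\id)>1-1/L\geq 1-1/5$. Hence $d(\phi(g),\id)>1-1/5-1/5-1/10>0$, so $\phi$ is injective on $B_S(l)$. Therefore $\mathcal{R}_\Gamma^S(l)\leq\lvert\Sym(\Omega)\rvert=\mathcal{D}_\Gamma^S(2F_\Gamma^\pi(10l))!$. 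The constant $10$ is chosen to make this final arithmetic work, and $C$ comes from Proposition \ref{StabLinearProp}.
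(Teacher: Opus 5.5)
Your skeleton is the paper's proof. You take a $(B_S(L),1/L)$-almost representation with $L=2F_{\Gamma}^{\pi}(10l)$, form the induced homomorphism $\rho$ of $\mathbb{F}$, apply stability at $\epsilon=1/10l$, and get injectivity on $B_S(l)$ from the triangle inequality. Your closing arithmetic, using $L\ge 5l$ from Proposition \ref{StabLinearProp}(ii), is fine.

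The step everything hinges on, that $\rho$ is a $(\delta,E)$-almost-solution, is handled incorrectly. The ``main obstacle'' you identify arises only because you bounded $\lvert r\rvert$ by $L$. That throws away what the choice of $(\delta,E)$ gives you, namely $\lvert r\rvert\le\lVert E\rVert\le\delta L$. With it, the prefix estimate yields at once $d_{\Omega}(\rho(r),\id_{\Omega})<\lvert r\rvert/L\le\lVert E\rVert/L\le\delta$. The inequality is strict because each defect in condition (i) is strict and $\psi(\pi(r))=\psi(e)=\id_{\Omega}$ by (iii). This single line is exactly what the paper does.

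The justification you finally offer instead, $L=2F\ge 2\lvert r\rvert/\delta$ so that $\lvert r\rvert/L\le\delta/2$, is not valid. You only arranged $\lVert E\rVert/\delta\le 2F$, and since $F$ is an infimum that need not be attained, $\lvert r\rvert/\delta\le F$ is unavailable. The factor $\tfrac12$ is simply unnecessary. Nor is ``rechoosing $(\delta,E)$ so that $E$ is a single word'' a legitimate move: the stability guarantee for $(\delta,E)$ does not transfer to such a replacement. Delete these detours, along with the finer-precision and tensor-power ideas; none is needed.

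Two smaller points:
\begin{itemize}
\item Proposition \ref{StabLinearProp}(i) is an upper bound on $\delta$, so it cannot give $\delta\ge 1/L$. That inequality follows directly from $\lVert E\rVert\ge 1$ and $\lVert E\rVert/\delta\le L$, and is never used anyway.
\item The prefix estimate should read $d_{\Omega}(\rho(w),\psi(\pi(w)))<\lvert w\rvert/L$, started from the empty word via (iii). Your $(\lvert w\rvert-1)/L$ fails for words beginning with an inverse letter, because $\psi(\pi(x))^{-1}$ and $\psi(\pi(x)^{-1})$ are only guaranteed to be $1/L$-close. Each later letter, positive or inverse, costs less than $1/L$ by condition (i).
\end{itemize}
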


\begin{proof}
Write $F_{\Gamma} ^{\pi} = F$, and as before write $S = \pi (X)$. 
Set $\epsilon = 1/10l$ and let $(\delta,E)$ be a pair satisfying 
the conditions of Definition \ref{StableDefn} with 
$\lVert E \rVert / \delta \leq 2 F(10l)$. 
Let $\phi : B_S (2 F(10l)) \rightarrow \Sym(\Omega)$ be 
a $(B_S (2 F(10l)),1/2 F(10l))$-representation. 
Define a homomorphism $\psi : \mathbb{F} \rightarrow \Sym(\Omega)$ by 
$\psi(x) = (\phi \circ \pi)(x)$ for all $x \in X$. 
Then by condition (i) of Definition \ref{SoficDefn} and 
repeated use of the triangle inequality we have: 
\begin{equation} \label{ClosenessIneq}
d_{\Omega} \big( \psi(w),(\phi \circ \pi)(w) \big) < \frac{\lvert w \rvert}{2 F(10l)}
\end{equation}
for all $w \in B_X (2 F(10l))$. 
Specifically for $w \in E$ we have: 
$$\lvert w \rvert \leq \lVert E \rVert \leq \lVert E \rVert / \delta \leq 2 F(10l)$$
so that $w \in B_X (2 F(10l)) $ and: 
$$d_{\Omega} \big( \psi(w),\id_{\Omega} \big) < \delta$$
hence $\psi$ is a $(\delta,E)$-almost-solution. 
Let $\theta : \Gamma \rightarrow \Sym(\Omega)$ be a homomorphism 
such that $\theta \circ \pi$ and $\psi$ are $\epsilon$-close. 
We claim that the restriction of $\theta$ to $B_S (l)$ is injective, 
so that $\mathcal{R}_{\Gamma} ^S (l) \leq \lvert \Omega \rvert !$. 
Taking $\Omega$ to be of minimal size then yields (\ref{SoficProfStabRFIneq}) 
and concludes the proof. 

Let $e \neq g \in B_S (l)$ and let $w \in B_X (l)$ with $\pi(w) = g$. 
Then: 
$$d_{\Omega} \big( (\theta \circ \pi)(w) , \psi(w) \big) < \epsilon \lvert w \rvert \leq \frac{1}{10} $$
so that by (\ref{ClosenessIneq}) we have: 
\begin{align*}
d_{\Omega} \big( \theta(g) , \id_{\Omega} \big) 
& \geq d_{\Omega} \big( \phi(g) , \id_{\Omega} \big) 
- d_{\Omega} \big( \psi(w),(\phi \circ \pi)(w) \big) 
- d_{\Omega} \big( \theta \circ \pi)(w) , \psi(w) \big) \\
& > \frac{9}{10} - \frac{1}{10l} - \frac{l}{2 F(10l)} \\
& \geq \frac{6}{10}
\end{align*}
where the last inequality holds for all $l$ sufficiently large by 
Proposition \ref{StabLinearProp} (ii). 
Thus $\theta(g) \neq \id_{\Omega}$, as desired. 
\end{proof}

\section{Generalized B.H. Neumann groups} \label{NeumannSection}

\subsection{Definitions and basic properties} \label{NeumannBasicSubsect}

Let $d , r  : \mathbb{N} \rightarrow \mathbb{N}$ 
be functions such that $d$ is nondecreasing and 
takes only odd values at least $5$, 
and $d(n) \geq 2r(n)+1$ for all $n \in \mathbb{N}$. 
Let: $$\alpha_n = \big(1 \; 2 \; \cdots \; d(n)\big) , 
\beta_n = \big(1 \; (1+r(n)) \; (1+2r(n))\big) \in \Alt (d(n)),$$
let $\alpha = (\alpha_n) , \beta = (\beta_n) \in \prod_n \Alt(d(n))$ and let 
$S = \lbrace \alpha,\beta \rbrace$. 
Let: $$G = G(d,r) = \langle S \rangle \leq \prod_n \Alt(d(n))$$ 
be the \emph{generalized B.H. Neumann group} associated 
to the functions $d$ and $r$. 
The classical \emph{B.H. Neumann groups} 
correspond to the special case for which $r \equiv 1$ (see \cite{Neum}). 
We denote by $\pi_m$ the natural projection 
$\prod_n \Alt(d(n)) \twoheadrightarrow \Alt(d(m))$, 
so that $\pi_m (\alpha) = \alpha_m$ and $\pi_m (\beta) = \beta_m$. 

In general, it may happen that $\pi_m (G)$ is a proper subgroup 
of $\Alt(d(m))$. For instance if $d(m)=3r(m)$ then 
$\alpha_m$ and $\beta_m$ both preserve the nontrivial block-system 
of residue classes modulo $3$, hence $\pi_m (G)$ 
is imprimitive. 
We may avoid such cases by assuming that $d(m)$ is a prime number. 

\begin{lem} \label{PiSurjLem}
Suppose $d(m) \geq 5$ is prime. 
Then $\langle \alpha_m,\beta_m \rangle = \Alt(d(m))$. 
\end{lem}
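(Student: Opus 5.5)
Write $p = d(m)$, $r = r(m)$ and $H = \langle \alpha_m, \beta_m \rangle$. We first check that $H \le \Alt(p)$. The cycle $\alpha_m$ has odd length $p$, so it is an even permutation. The element $\beta_m$ is a $3$-cycle, which is also even. It remains to show $H \supseteq \Alt(p)$, and the plan is to deduce this from Jordan's theorem.

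\textbf{Transitivity.} $H$ contains the $p$-cycle $\alpha_m$, so $H$ is transitive on $\{1,\ldots,p\}$.

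\textbf{Primitivity.} A transitive group of prime degree is primitive. Indeed, the blocks of any block system all have the same size, and this size divides $p$. So the only block systems are the trivial ones. This is exactly where primality of $d(m)$ is used. It rules out the imprimitive example $d(m)=3r(m)$ mentioned just before the lemma.

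\textbf{Jordan's theorem.} $H$ is primitive and contains a $3$-cycle, namely $\beta_m$. Note that the three points $1, 1+r, 1+2r$ are distinct, because $p \ge 2r+1$ and $r \ge 1$. Jordan's theorem says that a primitive permutation group containing a $3$-cycle contains the full alternating group. Hence $H \supseteq \Alt(p)$, and so $H = \Alt(p)$.

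\textbf{Fallback.} If we want to avoid citing Jordan's theorem, we can argue elementarily. Conjugating $\beta_m$ by powers of $\alpha_m$ gives the $3$-cycles $(i \;\, i+r \;\, i+2r)$ for every $i$, with indices read mod $p$. Since $\gcd(r,p)=1$, we can relabel points via $i \mapsto ir^{-1} \bmod p$. Under this relabelling the cycles become $(j \;\, j+1 \;\, j+2)$ for all $j$. These consecutive $3$-cycles generate $\Alt(p)$, by the standard result that $(1\,2\,3),(2\,3\,4),\ldots$ generate the alternating group.

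\textbf{Main obstacle.} The only genuinely non-routine input is this final generation step, or equivalently Jordan's theorem. We expect citing Jordan's theorem to suffice.
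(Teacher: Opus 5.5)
Your proof is correct and follows the paper's argument: transitivity from the $d(m)$-cycle $\alpha_m$, primitivity because the degree is prime, and then Jordan's theorem applied to the $3$-cycle $\beta_m$. The paper quotes Jordan's theorem in its prime-cycle form with $p=3$, whereas you use the classical $3$-cycle version, which holds in every degree including $d(m)=5$; your elementary fallback via the relabelling $i \mapsto i r^{-1}$ is also valid and needs only $\gcd(r(m),d(m))=1$.
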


\begin{proof}
Note first that, since $d(m)$ is prime and $\alpha_m$ is a $d(m)$-cycle, 
$\langle \alpha_m,\beta_m \rangle$ is a primitive subgroup of $ \Alt(d(m))$. 
The result then follows from Jordan's Theorem: 
if $d \geq 5$ and $H \leq \Sym(d)$ is a primitive subgroup 
containing a $p$-cycle for some prime $p \leq d-2$ then $\Alt(d) \leq H$. 
Since $\alpha_m$ and $\beta_m$ are even permutations and 
$\beta_m$ is a $3$-cycle, we are done. 
\end{proof}

Let $L_m = 
\langle \alpha^j \beta \alpha^{-j} : \lvert j \rvert\leq m \rangle 
\leq G$ and set: 
\begin{equation*}
L_{\infty} = \bigcup_m L_m
\end{equation*}
so that $L_{\infty}$ is the normal closure in $G$ of 
the element $\beta$, 
and $G / L_{\infty} \cong \mathbb{Z}$ is generated by 
$\alpha L_{\infty}$. 
Recall that for $\theta \in \Sym(n)$, 
$\supp(\theta)$ is the number of points $1 \leq i \leq n$ 
satisfying $\theta(i) \neq i$. 

\begin{lem} \label{LSmallSuppLem}
Let $l \in L_m$. Then for all $n \in \mathbb{N}$, 
$\supp(\pi_n (l)) \leq 6m+3$. 
\end{lem}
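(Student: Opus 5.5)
The plan is to bound the support of $\pi_n(l)$ by the union of the supports of the generators of $L_m$ in the $n$-th coordinate. A product of permutations moves only points moved by at least one factor, so it suffices to control these generators.

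Fix $n$. The group $\pi_n(L_m)$ is generated by the elements $\pi_n(\alpha^j \beta \alpha^{-j}) = \alpha_n^j \beta_n \alpha_n^{-j}$ for $\lvert j \rvert \leq m$. For any product $\theta_1 \cdots \theta_k$ of permutations, the set of points it moves is contained in $\bigcup_i \supp(\theta_i)$. This holds because a point fixed by every factor is fixed by the product; the same applies to inverses, which have the same support. Hence the set of points moved by $\pi_n(l)$ is contained in
$$\bigcup_{\lvert j \rvert \leq m} \supp\big(\alpha_n^j \beta_n \alpha_n^{-j}\big),$$
where here $\supp$ denotes the set of moved points rather than its cardinality.

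Each conjugate $\alpha_n^j \beta_n \alpha_n^{-j}$ is the $3$-cycle $\big(\alpha_n^j(1)\;\alpha_n^j(1+r(n))\;\alpha_n^j(1+2r(n))\big)$, so its support has exactly $3$ points. The union over the $2m+1$ values of $j$ therefore has at most $3(2m+1) = 6m+3$ points. This is the desired bound, and it is uniform in $n$.

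I expect no real obstacle. The only point needing care is the convention that $\supp$ is the number of moved points, which the paper fixes just before the statement, and the observation that the bound holds without using any structure of $L_m$ beyond its generating set. No finer estimate, for instance using overlaps of the shifted supports or the rotational structure of $\alpha_n$, is needed.
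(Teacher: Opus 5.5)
Your proof is correct and follows the paper's own argument. Since $\pi_n(L_m)$ is generated by the $2m+1$ three-cycles $\alpha_n^j\beta_n\alpha_n^{-j}$, any element moves only points in the union of their supports, which gives $3(2m+1)=6m+3$; your remark that a product moves only points moved by some factor is exactly what justifies the paper's one-line sum estimate.
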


\begin{proof}
The permutation group $\pi_n (L_m)$ is generated by 
the 3-cycles $\alpha_n ^j \beta_n \alpha_n ^{-j}$, for $\lvert j \rvert \leq m$
Thus: 
\begin{equation*}
\supp(\pi_n (l)) \leq \sum_{\lvert j \rvert \leq m} \supp (\alpha_n ^j \beta_n \alpha_n ^{-j}) \leq 3(2m+1)
\end{equation*}
as desired. 
\end{proof}

\begin{lem}
If $\langle \alpha_n , \beta_n \rangle = \Alt(d(n))$, 
then $\pi_n(L_m) = \Alt(d(n))$ for all $m \geq d(n) \log d(n) / \log(2)$. 
\end{lem}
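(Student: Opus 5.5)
We must show that for $m \geq d(n)\log d(n)/\log 2$ the 3-cycles $\alpha_n^j\beta_n\alpha_n^{-j}$ with $\lvert j\rvert \leq m$ generate $\Alt(d(n))$. The key observation is that $\alpha_n$ has order $d(n)$, so conjugation by $\alpha_n^j$ depends only on $j$ modulo $d(n)$. Since $m \geq d(n)\log d(n)/\log 2 \geq d(n)$ (as $d(n)\geq 5$ gives $\log d(n) > \log 2$), the range $\lvert j\rvert \leq m$ covers every residue class modulo $d(n)$. Hence
$$\pi_n(L_m) = \langle \alpha_n^j \beta_n \alpha_n^{-j} : j \in \mathbb{Z} \rangle,$$
and it remains to show that this group, call it $N$, is all of $\Alt(d(n))$.

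To see this, note that $N$ is the normal closure of $\beta_n$ in $\langle \alpha_n,\beta_n\rangle = \Alt(d(n))$. Indeed, $N$ is normalised by $\alpha_n$, since $\alpha_n$ permutes its generators. It is normalised by $\beta_n$ because $\beta_n \in N$ (take $j=0$). So $N$ is normal in $\langle\alpha_n,\beta_n\rangle$, which is $\Alt(d(n))$ by hypothesis. Since $d(n)\geq 5$, $\Alt(d(n))$ is simple, and $N$ contains the nontrivial element $\beta_n$, so $N=\Alt(d(n))$.

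Alternatively, $\Alt(d(n)) = \langle \alpha_n,\beta_n\rangle = N\langle\alpha_n\rangle$. If $N$ were a proper normal subgroup it would have to be trivial, which it is not. This gives the result.

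There is no real obstacle. The stated bound $d(n)\log d(n)/\log 2$ is far more than needed: any $m \geq \lfloor d(n)/2\rfloor$ already covers all residues, since $j$ ranges over $2m+1 \geq d(n)$ consecutive integers. Presumably the author's bound comes from a word-length argument bounding how long generation takes, such as a chain of subgroups of length at most $\log_2 \lvert \Alt(d(n))\rvert \leq d(n)\log d(n)/\log 2$. The residue argument above subsumes this, so I would present it directly, checking only the elementary inequality $m \geq d(n)$.
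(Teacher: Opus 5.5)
Your proof is correct. Its core is the same as the paper's: a subgroup of $\Alt(d(n))$ that contains $\beta_n$ and is normalised by $\alpha_n$ is normal in $\langle\alpha_n,\beta_n\rangle=\Alt(d(n))$, hence is everything by simplicity. Where you differ is in how you decide when $\pi_n(L_m)$ has become such a subgroup.

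The paper never uses the order of $\alpha_n$. It observes that if $\pi_n(L_m)=\pi_n(L_{m+1})$, then conjugation by $\alpha_n^{\pm 1}$ preserves $\pi_n(L_m)$. So the chain $\pi_n(L_0)\leq\pi_n(L_1)\leq\cdots$ is strictly increasing until it reaches $\Alt(d(n))$. Since each proper step at least doubles the order, this happens within $\log_2\lvert\Alt(d(n))\rvert\leq d(n)\log d(n)/\log 2$ steps, which is exactly the mechanism you guessed.

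You instead use that $\alpha_n$ is a $d(n)$-cycle. The conjugates $\alpha_n^j\beta_n\alpha_n^{-j}$ are then periodic in $j$, so $\pi_n(L_m)$ is already the full normal closure of $\beta_n$ once $2m+1\geq d(n)$. This is shorter and gives the much sharper threshold $m\geq (d(n)-1)/2$.

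What the paper's chain argument buys is independence from the order of the conjugating element. For any finite simple group $Q=\langle a,b\rangle$ with $b\neq e$, it gives $\langle a^jba^{-j}:\lvert j\rvert\leq m\rangle=Q$ for all $m\geq\log_2\lvert Q\rvert$. This can be far smaller than half the order of $a$, which is what your periodicity argument needs when $a$ has large order. For the lemma as stated, where $\alpha_n$ has order $d(n)$, your route is the more efficient one.
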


\begin{proof}
We claim first that if $m \in \mathbb{N}$ with $\pi_n(L_m) \neq \Alt(d(n))$ 
then $\pi_n(L_m) \lneq \pi_n(L_{m+1})$. 
Suppose to the contrary that $\pi_n(L_m) = \pi_n(L_{m+1}) \neq \Alt(d(n))$. 
Then for all $\lvert j \rvert\leq m$, 
\begin{center}
$\alpha_n ^{\pm 1} (\alpha_n ^j \beta_n \alpha_n ^{-j}) \alpha_n ^{\mp 1} 
\in \pi_n(L_{m+1}) = \pi_n(L_m)$ and 
$\beta_n ^{\pm 1} (\alpha_n ^j \beta_n \alpha_n ^{-j}) \beta_n ^{\mp 1} 
\in \pi_n(L_m)$. 
\end{center}
Thus, since $\langle \alpha_n , \beta_n \rangle = \Alt(d(n))$ 
we have that $\pi_n(L_m)$ is a nontrivial normal subgroup 
of $\Alt(d(n))$, contradicting the simplicity of the latter. 

Hence the $\pi_n(L_m)$ form a strictly increasing sequence of 
nontrivial subgroups of $\Alt(d(n))$, 
so $\lvert \pi_n(L_m) \rvert \geq 2^m$ by induction. 
The conclusion follows since $\lvert \Alt(d(n))\rvert \leq d(n)^{d(n)}$. 
\end{proof}

\subsection{The lamplighter quotient} \label{LampSubsect}

Let $G = G(d,r)$ and $S$ be as in Subsection \ref{NeumannBasicSubsect}. 
Throughout this Subsection we make the following standing assumptions: 
\begin{itemize}
\item[(i)] $d$ takes only prime values; 
\item[(ii)] $r$ is strictly increasing; 
\item[(iii)] For all $n \in \mathbb{N}$, $3r(n) \leq d(n)$. 
\end{itemize}
Let $W = C_3 \wr \mathbb{Z} = \bigoplus_{\mathbb{Z}} C_3 \rtimes \mathbb{Z}$ be the regular restricted wreath product 
of $C_3$ and $\mathbb{Z}$, 
so that $\mathbb{Z}$ acts on 
$B(W)=\bigoplus_{\mathbb{Z}} C_3$ 
by shifting co-ordinates. 
Let $a$ be a generator for $\mathbb{Z}$ 
and for $n \in \mathbb{Z}$ let $b_n \in B(W)$ 
be a generator for the copy of $C_3$ supported at $n$, 
such that $a^n b_m a^{-n} = b_{m+n}$. 
Note that $\lbrace a,b_0 \rbrace$ 
is a generating set for $W$ and 
$\lbrace b_n :n\in\mathbb{Z}\rbrace$ 
is a generating set for $B(W)$. 

The next observation is Proposition 2.13 of \cite{BradRF}, 
specialized to the case $r_1 = r_2 = r$. 

\begin{propn} \label{AltWreathLocalProp}
Let $w(x, y) \in F (x, y)$ be a freely reduced word of length at
most $l$ in the variables $x$ and $y$. Let $n \in \mathbb{N}$ and suppose: 
\begin{equation}
r(n) , d(n) - 2r(n) \geq 2l+1. 
\end{equation}
Then $w(\alpha_n,\beta_n) = e$ in $\Alt(d(n))$ 
iff $w(a,b_0)=e$ in $W$. 
\end{propn}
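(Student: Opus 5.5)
The plan is to rewrite both sides in a common normal form and compare them. Write $w = x^{\epsilon_1}\cdots$ (letters $x^{\pm1},y^{\pm1}$) and push all occurrences of $x$ to the left. This gives, in the free group,
$$w(x,y) = x^{e}\prod_{k=1}^{t} \big(x^{j_k} y^{\eta_k} x^{-j_k}\big),$$
where $e$ is the exponent sum of $x$ in $w$, $\eta_k = \pm 1$, and each $j_k$ is a partial exponent sum of $x$ along a prefix of $w$. Since $\lvert w \rvert \leq l$, we have $\lvert e \rvert \leq l$ and $\lvert j_k \rvert \leq l$ for all $k$. Substituting into $W$ gives $w(a,b_0) = a^e \prod_k b_{j_k}^{\eta_k}$. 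Because $B(W)$ is abelian and normal with $W/B(W)\cong\mathbb{Z}$ generated by the image of $a$, we get $w(a,b_0)=e$ iff $e=0$ and, for every $\lvert j\rvert\leq l$, $\sum_{k : j_k=j}\eta_k \equiv 0 \pmod 3$.

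On the Alternating side, substituting gives $w(\alpha_n,\beta_n) = \alpha_n^e\prod_k c_{j_k}^{\eta_k}$, where $c_j = \alpha_n^j\beta_n\alpha_n^{-j}$ is the $3$-cycle on the triple $T_j=\{1+j,\,1+j+r(n),\,1+j+2r(n)\}$, read modulo $d(n)$. The key step is to show that the triples $T_j$ for $\lvert j\rvert\leq l$ are pairwise disjoint. Each point of $T_j$ lies in one of the three windows $[1-l,1+l]$, $[1+r-l,1+r+l]$, $[1+2r-l,1+2r+l]$ (mod $d$), each of width $2l+1$. The cyclic gaps between consecutive window centres are $r, r, d-2r$, all at least $2l+1$, so the windows are pairwise disjoint. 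Inside a window the map $j\mapsto$ point is injective, so the $T_j$ are indeed disjoint.

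Consequently the $c_j$ with $\lvert j\rvert\leq l$ commute and generate an internal direct product $\prod_{\lvert j\rvert\leq l}\langle c_j\rangle\cong C_3^{2l+1}$. Thus $b_j\mapsto c_j$ is an isomorphism from $\langle b_j:\lvert j\rvert\leq l\rangle$ onto this subgroup. It follows that $P:=\prod_k c_{j_k}^{\eta_k}$ is trivial iff each exponent sum $\sum_{j_k=j}\eta_k$ vanishes mod $3$.

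It remains to show that $\alpha_n^eP=\id$ forces $e=0$; this is the only point needing care, since a priori $\alpha_n^{-e}$ could coincide with $P$. Note that $P$ has order dividing $3$. If $e\neq 0$, then $0<\lvert e\rvert\leq l<d(n)$, where $d(n)\geq 2r(n)+2l+1\geq 6l+3$. Hence $\alpha_n^{-e}$ is a nontrivial power of a $d(n)$-cycle with $d(n)$ prime, so it has order $d(n)\geq 5$. Therefore $\alpha_n^{-e}\neq P$. So $w(\alpha_n,\beta_n)=e$ iff $e=0$ and $P=\id$, which by the previous paragraph matches the criterion for $w(a,b_0)=e$.

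The main obstacle is the disjointness bookkeeping modulo $d(n)$, which is exactly where both hypotheses $r(n)\geq 2l+1$ and $d(n)-2r(n)\geq 2l+1$ are used. The remaining steps are routine.
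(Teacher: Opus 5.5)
Your proof is correct. The paper does not prove this statement itself; it quotes it as Proposition 2.13 of \cite{BradRF}. Your argument is nonetheless the natural one, and it uses the same mechanism as the paper's proof of Proposition~\ref{3CycProp}. There, the hypotheses $r(n), d(n)-2r(n)\geq 2l+1$ force the $3$-cycles $\alpha_n^j\beta_n\alpha_n^{-j}$, $\lvert j\rvert\leq l$, to have pairwise disjoint supports, so they form a basis of a copy of $C_3^{2l+1}$ corresponding to $b_{-l},\ldots,b_l$. You add the normal form $w=x^e\prod_k x^{j_k}y^{\eta_k}x^{-j_k}$ and treat the exponent sum $e$ separately. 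That is exactly what is needed to pass from words in the conjugates of $y$ (as in Proposition~\ref{3CycProp}) to arbitrary words.

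Your last step is worth a remark. You rule out $\alpha_n^{-e}=P$ using that $d(n)$ is prime. This is legitimate under the standing assumptions of Subsection~\ref{LampSubsect}. However, the paper later applies the proposition in the proof of Proposition~\ref{LEFUBProp} to $\overline{\alpha},\overline{\beta}\in\Alt(12l+3)$, and $12l+3$ is not prime.

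Primality is not needed. If $0<\lvert e\rvert\leq l<d(n)$, then $\alpha_n^{-e}$ is fixed-point-free. On the other hand, $P$ is a product of $t\leq l$ three-cycles, so it moves at most $3l<6l+3\leq d(n)$ points. Hence $\alpha_n^{-e}\neq P$ regardless of whether $d(n)$ is prime.

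A smaller point: with $x^e$ pulled to the left, $j_k$ is minus a suffix exponent sum rather than a prefix sum. This is harmless, since $\lvert j_k\rvert\leq l$ either way.
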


A first, crucial application of Proposition \ref{AltWreathLocalProp} 
is a decomposition of many generalized B.H. Neumann groups 
as a short exact sequence with quotient $W$ and 
kernel a direct sum of finite alternating groups. 

\begin{propn} \label{TailHomProp}
There exists a surjective homomorphism 
$\tau : G \rightarrow W$ satisfying 
$\tau(\alpha) = a$ and $\tau (\beta) = b_0$. 
The kernel of $\tau$ is: 
\begin{equation*}
\ker (\tau) = \bigoplus_n \Alt (d(n)) \leq \prod_n \Alt (d(n))
\end{equation*}
(in particular $\bigoplus_n \Alt (d(n))$ is contained in $G$)
and $\ker (\tau) \leq L_{\infty}$. 
\end{propn}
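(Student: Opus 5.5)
To construct $\tau$, I will use Proposition \ref{AltWreathLocalProp} together with the standing assumption that $r$ is strictly increasing, so that $r(n)\to\infty$ and hence $d(n)-2r(n)\geq r(n)\to\infty$ by assumption (iii). Let $\hat{\tau}:F(x,y)\to W$ be the homomorphism with $x\mapsto a$, $y\mapsto b_0$, and $\hat{\sigma}:F(x,y)\to G$ the one with $x\mapsto\alpha$, $y\mapsto\beta$. It suffices to show $\ker\hat{\sigma}\subseteq\ker\hat{\tau}$. If $w$ is a freely reduced word of length $l$ with $w(\alpha,\beta)=e$, then $w(\alpha_n,\beta_n)=e$ for every $n$. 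Choose $n$ large enough that $r(n),d(n)-2r(n)\geq 2l+1$. Then Proposition \ref{AltWreathLocalProp} gives $w(a,b_0)=e$. So $\tau$ is well defined, and it is surjective because $\{a,b_0\}$ generates $W$.

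For the kernel, I first show $\bigoplus_n\Alt(d(n))\leq G$. Fix $m$, and consider the commutator $\gamma=[\alpha^{j}\beta\alpha^{-j},\beta]$, or a similar short word $w$. In $W$ these conjugates commute, since $B(W)$ is abelian, so $w(a,b_0)=e$. Hence, by Proposition \ref{AltWreathLocalProp}, $\pi_n(w(\alpha,\beta))=e$ for all $n$ with $r(n)\geq 2|w|+1$ and $d(n)-2r(n)\ge 2|w|+1$; this holds for all but finitely many $n$ because $r$ is strictly increasing. I will choose $j$ (for instance $j=r(m)$) so that in $\Alt(d(m))$ the $3$-cycles $\beta_m$ and $\alpha_m^{j}\beta_m\alpha_m^{-j}$ overlap in exactly one point, making their commutator nontrivial. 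The resulting element $g\in G$ then has finite support in the product and nontrivial $m$-th coordinate.

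To isolate a single coordinate, I will use simplicity. Let $N$ be the set of $n$ where $\pi_n(g)\neq e$; this set is finite. The normal closure of $g$ in $G$ projects onto each $\Alt(d(n))$ for $n\in N$, by Lemma \ref{PiSurjLem} and simplicity. Since the $d(n)$ are distinct (or, where they coincide, one can use that $r$ differs), the coordinate projections are pairwise nonisomorphic as marked quotients, and a Goursat-type argument on the finite product $\prod_{n\in N}\Alt(d(n))$ shows that the normal closure is the full product. Hence each $\Alt(d(m))\leq G$.

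I expect this to be the main obstacle. If two coordinates have equal $d$, the normal closure might be a diagonal subgroup. To handle that, I would instead pick $w$ adapted to $m$, vanishing in the coordinates where $r(n)$ is large but not at $m$, and use induction on $m$: subtract off the coordinates with small index using the elements already shown to lie in $G$.

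Once the direct sum lies in $G$, it is contained in $\ker\tau$. Indeed, each element of $\Alt(d(m))$ is a word $w$ in $\alpha,\beta$ agreeing with $w(\alpha_m,\beta_m)$ in coordinate $m$, but I would argue more cleanly as follows. $\tau$ restricted to the normal subgroup $\Alt(d(m))$ has image a finite normal subgroup of $W$, and $W$ has no nontrivial finite normal subgroups, so the image is trivial. Conversely, if $\tau(g)=e$ with $g=w(\alpha,\beta)$, then by Proposition \ref{AltWreathLocalProp} $\pi_n(g)=e$ for all large $n$, so $g\in\bigoplus_n\Alt(d(n))$. Finally, $\ker\tau\leq L_\infty$ because each $\Alt(d(m))$ is perfect and generated by conjugates of elements of $L_\infty$. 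More precisely, $G/L_\infty\cong\mathbb{Z}$ is abelian, so the perfect subgroup $\Alt(d(m))$ maps trivially to it.
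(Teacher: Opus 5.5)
Your argument works, but it takes a different route from the paper's at the key step. The paper uses the specific element $x_m=[\alpha^{r(m)}\beta\alpha^{-r(m)},\beta]$. It checks by hand, not via Proposition~\ref{AltWreathLocalProp}, that the two $3$-cycles have disjoint supports in $\Alt(d(n))$ for \emph{every} $n>m$: the conditions $0<r(m)<r(n)$ and $3r(n)\le d(n)$ exclude $r(m)\equiv kr(n)$ for $\lvert k\rvert\le 2$. So $x_m$ is supported on $\{1,\dots,m\}$ with nontrivial $m$-th coordinate. A lower-triangular induction, taking normal closures inside $L_\infty$, then gives $\bigoplus_{n\le m}\Alt(d(n))\le L_\infty$. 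This yields $\bigoplus_n\Alt(d(n))\le G$ and $\ker\tau\le L_\infty$ simultaneously, and never meets a diagonal subgroup. You instead accept any finitely supported element, which Proposition~\ref{AltWreathLocalProp} gives for free, and you pay for that with a Goursat argument. In return, your proofs of $\bigoplus_n\Alt(d(n))\le\ker\tau$ (no nontrivial finite normal subgroups in $W$, or just perfectness) and of $\ker\tau\le L_\infty$ (a perfect subgroup maps trivially to the cyclic group $G/L_\infty$) are cleaner than the paper's and need no induction.

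You should not leave the marked non-isomorphism as a parenthesis. The hypotheses allow $d$ to repeat values, and then this claim carries the whole argument. It is true, and the proof is short:
\begin{itemize}
\item[(1)] If $d(n)=d(n')=d$, a prime $\ge 5$, every automorphism of $\Alt(d)$ is conjugation by some $\sigma\in\Sym(d)$.
\item[(2)] Sending $\alpha_n$ to $\alpha_{n'}=\alpha_n$ forces $\sigma\in\langle\alpha_n\rangle$, i.e.\ a translation of $\mathbb{Z}/d$.
\item[(3)] Conjugating by a translation preserves the displacements $\beta(x)-x$. For $\beta_n$ these are $r(n),r(n),-2r(n)$; for $\beta_{n'}$ they are $r(n'),r(n'),-2r(n')$.
\item[(4)] Since $3r(n),3r(n')\not\equiv 0\pmod d$, the repeated value gives $r(n)\equiv r(n')\pmod d$. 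Hence $r(n)=r(n')$, contradicting strict monotonicity.
\end{itemize}
Then Goursat applied to $\pi_N(G)$ shows it is all of $\prod_{n\in N}\Alt(d(n))$. Your normal closure is normal in that product and surjects onto each factor, so it equals the product. Alternatively, apply Goursat to the normal closure itself: since the factors have trivial centre, a $G$-normalised diagonal would be a marked isomorphism.

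Your fallback, as phrased, would not work from Proposition~\ref{AltWreathLocalProp} alone. That proposition gives vanishing only once $r(n)\ge 2\lvert w\rvert+1\approx 8r(m)$, which leaves the coordinates just above $m$ unhandled. This is exactly why the paper checks disjointness directly. A minor point: with $j=r(m)$ the two $3$-cycles in $\Alt(d(m))$ share two points, not one, though they still do not commute.
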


\begin{proof}
By our standing assumptions, 
$r(m)$ and $d(m) - 2r(m)$ both tend to infinity with $m$. 
The first statement, 
and the fact that $$\ker (\tau) = G \cap \bigoplus_n \Alt (d(n))$$ 
are then immediate from our Proposition \ref{AltWreathLocalProp} 
and Lemma 4.6 of \cite{KasPak}. 
For the remainder of the statement, 
we prove by induction on $m$ that $$\bigoplus_{n=1} ^m \Alt (d(n)) 
\leq L_{\infty}.$$
Since $r$ is strictly increasing, we have that 
$x_m = [\alpha^{r(m)}\beta\alpha^{-r(m)} , \beta] \in L_{\infty}$ satisfies: 
\begin{center}
$\pi_n (x_m) = e$ for $n > m$ but $\pi_m (x_m) \neq e$. 
\end{center}
Let $X_m$ be the normal closure in $L_{\infty}$ of $x_m$
Then $X_m \leq \bigoplus_{n=1} ^m \Alt (d(n))$ and, 
since $\pi_m(L_{\infty}) = \Alt(d(m))$ 
(by Lemma \ref{PiSurjLem}) 
we have $\pi_m(X_m) = \Alt(d(m))$ (by simplicity of $\Alt(d(m))$). 
By inductive hypothesis we have $\bigoplus_{n=1} ^{m-1} \Alt (d(n)) \leq L_{\infty}$, 
which, together with $X_m$, generates $\bigoplus_{n=1} ^m \Alt (d(n))$, 
yielding the desired result. 
\end{proof}

Another observation, 
closely related in spirit to Proposition \ref{AltWreathLocalProp} 
is the following, which we shall also require. 

\begin{propn} \label{3CycProp}
Let $m,n \in \mathbb{N}$ and suppose $r(n) , d(n) - 2r(n) \geq 2m+1$. 
For any $l \in L_m$, $\tau(l)=e$ iff $\pi_n(l)=e$. 
\end{propn}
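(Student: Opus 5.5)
Since $\tau(\alpha)=a$ and $\tau(\beta)=b_0$ (Proposition \ref{TailHomProp}), I would reduce the statement to a comparison of two explicit permutation models. For each $\lvert j \rvert \leq m$ set $c_j = \alpha^j \beta \alpha^{-j}$, so that $\tau(c_j) = b_j$ and $\pi_n(c_j) = \alpha_n^j \beta_n \alpha_n^{-j}$. The latter is the $3$-cycle $\big((1+j)\; (1+j+r(n))\; (1+j+2r(n))\big)$, with indices read modulo $d(n)$. Every $l \in L_m$ is a product of the $c_j^{\pm 1}$, so I would fix such a word $u$ in formal letters $y_j$, $\lvert j \rvert \leq m$, with $l = u(c_{-m},\ldots,c_m)$. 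It then suffices to show that $u(b_{-m},\ldots,b_m)=e$ in $B(W)$ iff $u(\pi_n(c_{-m}),\ldots,\pi_n(c_m)) = e$ in $\Alt(d(n))$. Note that the word $u$ may be arbitrarily long, so Proposition \ref{AltWreathLocalProp} cannot be applied directly; the hypothesis on $m$ alone has to control things.

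The key step is to exhibit an isomorphism between $\langle b_j : \lvert j \rvert\leq m\rangle \cong C_3^{2m+1}$ and $\langle \pi_n(c_j) : \lvert j\rvert \leq m\rangle$ sending $b_j \mapsto \pi_n(c_j)$. I would show that the $2m+1$ three-cycles $\pi_n(c_j)$ have pairwise disjoint supports. The support of $\pi_n(c_j)$ is $\{1+j,\,1+j+r(n),\,1+j+2r(n)\}$ modulo $d(n)$. For $j \neq j'$ with $\lvert j\rvert,\lvert j'\rvert \leq m$, a coincidence would force $j-j' \equiv k\,r(n) \pmod{d(n)}$ for some $k \in \{0,\pm1,\pm2\}$.
\begin{itemize}
\item The case $k=0$ is impossible because $0<\lvert j-j'\rvert \leq 2m < d(n)$.
\item The cases $k=\pm1$ are impossible because $r(n) \geq 2m+1 > \lvert j-j'\rvert$ and $d(n)-r(n) > 2m$.
\item The cases $k=\pm2$ are impossible because $2r(n) \geq 2m+1$ and $d(n)-2r(n) \geq 2m+1$.
\end{itemize}
Disjointness is exactly where the hypothesis $r(n), d(n)-2r(n) \geq 2m+1$ enters, and it is the step to check carefully.

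With disjointness in hand, the $\pi_n(c_j)$ are commuting elements of order $3$ whose supports are disjoint. Hence they generate an internal direct product $C_3^{2m+1}$: an element $\prod_j \pi_n(c_j)^{e_j}$ is trivial iff each $e_j \equiv 0 \pmod 3$, since its restriction to the support of $\pi_n(c_j)$ is just $\pi_n(c_j)^{e_j}$. The elements $b_j$, $\lvert j\rvert \leq m$, likewise generate $C_3^{2m+1}$ freely as an elementary abelian $3$-group. Consequently both $u(b)$ and $u(\pi_n(c))$ are determined by the exponent sums $e_j$ of $y_j$ in $u$ taken modulo $3$, and each is trivial iff all $e_j \equiv 0 \pmod 3$. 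This gives $\tau(l) = e$ iff $\pi_n(l) = e$. The only obstacle is the disjointness bookkeeping modulo $d(n)$; the rest is formal.
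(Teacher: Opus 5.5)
Your proposal is correct and follows essentially the same route as the paper. The paper also rewrites $l$ as a word in the conjugates $\alpha^j\beta\alpha^{-j}$ and reduces to showing that the disjointly supported $3$-cycles $\pi_n(\alpha^j\beta\alpha^{-j})$, like the $b_j$, freely generate $C_3^{2m+1}$ as an elementary abelian $3$-group. Your residue check modulo $d(n)$ supplies the verification of disjointness, which the paper asserts in a single line.
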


\begin{proof}
Let $F(Y)$ be the free group on the set 
$Y = \lbrace y_{i} : \lvert i \rvert \leq m \rbrace$. 
Then there is some word $w(y_{-m},\ldots,y_m) \in F(Y)$ 
such that $l=w(\alpha^{-m}\beta\alpha^m ,\ldots ,\alpha ^m \beta\alpha^{-m})$ 
so that $\tau(l)=w(b_{-m},\ldots,b_m)$ and 
$\pi_n(l)=w(\alpha_n ^{-m} \beta_n \alpha_n ^m , \ldots , \alpha_n ^m \beta_n \alpha_n ^{-m})$. 
The elements $b_{-m},\ldots,b_m \in B(W)$ form a 
minimal generating set for an elementary abelian $3$-group, 
so it suffices to show that 
$\alpha_n ^{-m} \beta_n \alpha_n ^m , \ldots , \alpha_n ^m \beta_n \alpha_n ^{-m} \in \Alt(d(n))$ also do so. 
This is the case, because they are $3$-cycles, 
which by our hypothesis 
are all disjointly supported. 
\end{proof}

\subsection{Amenable structure}

We retain the standing assumptions from the previous Subection. 
Recall that for $G$ a group generated by a finite symmetric set $T$, 
a \emph{F\o lner sequence} for $G$ is a sequence $(F_n)$ 
of nonempty finite subsets of $G$ satisfying: 
$$ \frac{\lvert F_n t \setminus F_n \rvert}{\lvert F_n \rvert} \rightarrow 0 \text{ as }n \rightarrow \infty $$
for all $t \in T$. Such a group $G$ is \emph{amenable} 
if it has a F\o lner sequence. 
Every finitely generated (locally finite)-by-soluble 
group is known to be amenable, 
so those generalized B.H. Neumann groups which satisfy 
the conclusion of Proposition \ref{TailHomProp} are seen to be amenable. 
It shall however be important to our arguments to have 
an explicit F\o lner sequence. 
Let $L_m = 
\langle \alpha^j \beta \alpha^{-j} : \lvert j \rvert\leq m \rangle 
\leq G$ be as described in Subsection \ref{NeumannBasicSubsect}. 

\begin{lem}
For all $m \in \mathbb{N}$, 
$L_m$ is a finite subgroup of $G$. 
\end{lem}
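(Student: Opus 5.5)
We need to show $L_m$ is finite. Our plan is to embed $L_m$ into a finite group by splitting the coordinates $n$ into finitely many ``small'' indices and the ``large'' tail, using Proposition \ref{3CycProp} to control the tail.

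First, since $r$ is strictly increasing and $d(n) \geq 3r(n)$, we have $r(n) \to \infty$ and $d(n)-2r(n) \geq r(n) \to \infty$. So there is $N = N(m)$ such that $r(n), d(n)-2r(n) \geq 2m+1$ for all $n > N$.

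Next, consider the homomorphism
$$\Phi : L_m \rightarrow \prod_{n=1}^{N} \Alt(d(n)) \times W, \qquad l \mapsto \big(\pi_1(l),\ldots,\pi_N(l),\tau(l)\big),$$
with $\tau$ the homomorphism of Proposition \ref{TailHomProp}. We claim $\Phi$ is injective. Suppose $\Phi(l)=e$. Then $\pi_n(l)=e$ for $n \leq N$ by construction. For $n>N$, the hypothesis of Proposition \ref{3CycProp} holds, and $\tau(l)=e$, so that proposition gives $\pi_n(l)=e$. Hence every coordinate of $l$ in $\prod_n \Alt(d(n))$ is trivial, and $l=e$.

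Finally, $\tau(L_m)$ is generated by $\tau(\alpha^j\beta\alpha^{-j}) = b_j$ for $\lvert j\rvert \leq m$. These generate an elementary abelian $3$-subgroup of $B(W)$ of order $3^{2m+1}$. So $\Phi(L_m)$ lies in the finite group $\prod_{n\leq N}\Alt(d(n)) \times \langle b_{-m},\ldots,b_m\rangle$, and $L_m$ is finite.

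The only delicate point is choosing $N$ correctly from the standing assumptions, so that Proposition \ref{3CycProp} applies to every tail coordinate. The rest follows directly from the earlier propositions.
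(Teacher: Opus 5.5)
Your proof is correct, but it takes a different route from the paper's.

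The paper argues structurally. $\tau(L_m)$ is a finitely generated subgroup of the abelian torsion group $B(W)$, hence finite. So $L_m \cap \ker(\tau)$ has finite index in $L_m$ and is therefore finitely generated. Since $\ker(\tau) = \bigoplus_n \Alt(d(n))$ is locally finite by Proposition \ref{TailHomProp}, $L_m \cap \ker(\tau)$ is finite. In effect this shows that $L_\infty$ is locally finite, as an extension of a locally finite group by a locally finite group.

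You instead write down an explicit injective homomorphism $L_m \rightarrow \prod_{n \leq N}\Alt(d(n)) \times W$. The tail coordinates $n > N$ are controlled by Proposition \ref{3CycProp}, and your choice of $N$ from $r(n) \to \infty$ and $d(n) - 2r(n) \geq r(n)$ is exactly what makes that proposition apply.

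What each approach buys:
\begin{itemize}
\item The paper's argument is shorter. It needs the inclusion $\ker(\tau) \leq \bigoplus_n \Alt(d(n))$ from Proposition \ref{TailHomProp}, plus the fact that finite-index subgroups of finitely generated groups are finitely generated.
\item Your argument uses only the existence of $\tau$ and the disjoint-support computation behind Proposition \ref{3CycProp}. It gives an explicit bound $\lvert L_m \rvert \leq 3^{2m+1}\prod_{n \leq N(m)} \lvert \Alt(d(n)) \rvert$, and shows directly that $L_m \cap \ker(\tau) \leq \bigoplus_{n \leq N(m)} \Alt(d(n))$.
\item Your argument is also the same mechanism the paper later uses for the injectivity step in Lemma \ref{FutureHomLem}.
\end{itemize}
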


\begin{proof}
Let $\tau$ be as in Proposition \ref{TailHomProp}. 
Then $\tau(L_m) = \langle a^j b a^{-j} : \lvert j \rvert\leq m \rangle \leq B(W)$ is a finitely generated subgroup of the abelian torsion group 
$B(W)$, hence is finite. 
Thus $L_m \cap \ker (\tau)$ has finite index in $L_m$, 
hence is finitely generated. 
But by Proposition \ref{TailHomProp}, $\ker(\tau)$ is locally finite, 
so $L_m \cap \ker (\tau)$ is finite. 
\end{proof}

Let  $S = \lbrace \alpha,\beta \rbrace$ 
be as in Subsection \ref{NeumannBasicSubsect}. 
For $n \geq 2$ let: 
$$G_n = \bigoplus_{k=1} ^{n-1} \Alt (d(k))$$
(which, by Proposition \ref{TailHomProp}, is a finite normal subgroup of $G$). 
Fix an increasing sequence $(m_n)$ of positive integers, 
and set:
$$F_n = \lbrace g l \alpha^j : g \in G_n , l \in L_{m_n} , \lvert j \rvert \leq m_n \rbrace$$
so that $G = \bigcup_n F_n$. 

\begin{propn} \label{NeumFolnerProp}
The sequence $(F_n)$ is a F\o lner sequence for $G$. 
Consequently $G$ is an amenable group. 
\end{propn}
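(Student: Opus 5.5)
The plan is to compute the boundary $F_n t \setminus F_n$ exactly for each generator $t \in T = \lbrace \alpha^{\pm 1}, \beta^{\pm 1} \rbrace$. The key structural observation is that the ``lamp'' part of $F_n$ is a finite subgroup, so the only boundary comes from the two ends of the $\alpha$-interval.

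First I will show that $H_n := G_n L_{m_n}$ is a finite subgroup of $G$. Since $G_n = \bigoplus_{k=1}^{n-1}\Alt(d(k))$ is a finite normal subgroup of $G$ (by Proposition \ref{TailHomProp}), and $L_{m_n}$ is a finite subgroup by the preceding Lemma, the product $G_n L_{m_n}$ is a subgroup. It has order at most $\lvert G_n\rvert \lvert L_{m_n}\rvert$. Then $F_n = \bigcup_{\lvert j\rvert \le m_n} H_n \alpha^j$.

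Next I will check that this union is disjoint. Proposition \ref{TailHomProp} gives $G_n \leq \ker(\tau) \leq L_\infty$, and clearly $L_{m_n} \leq L_\infty$, so $H_n \leq L_\infty$. If $H_n\alpha^j = H_n \alpha^{j'}$, then $\alpha^{j-j'} \in L_\infty$. Since $G/L_\infty \cong \mathbb{Z}$ is generated by $\alpha L_\infty$, this forces $j = j'$. Hence $\lvert F_n\rvert = (2m_n+1)\lvert H_n\rvert$.

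Now I compute the boundaries generator by generator.

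\emph{Right multiplication by $\alpha^{\pm1}$.} This sends $H_n\alpha^j$ to $H_n\alpha^{j\pm1}$. So $F_n\alpha \setminus F_n = H_n\alpha^{m_n+1}$, and similarly for $\alpha^{-1}$. The ratio is therefore
$$\frac{\lvert F_n \alpha^{\pm 1} \setminus F_n\rvert}{\lvert F_n\rvert} = \frac{1}{2m_n+1},$$
which tends to $0$ because $(m_n)$ is an increasing sequence of positive integers, so $m_n \to \infty$.

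\emph{Right multiplication by $\beta^{\pm1}$.} For $g \in G_n$, $l \in L_{m_n}$ and $\lvert j\rvert\le m_n$ we have
$$g l \alpha^j \beta^{\pm1} = g\, l(\alpha^j\beta^{\pm1}\alpha^{-j})\,\alpha^j,$$
and $\alpha^j\beta^{\pm 1}\alpha^{-j} \in L_{m_n}$. Hence $F_n\beta^{\pm1} = F_n$ exactly, and this boundary is empty.

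Together these show that $(F_n)$ is F\o lner, and amenability follows by definition. Finally, $G = \bigcup_n F_n$ holds because every element of $G$ is of the form $x\alpha^j$ with $x \in L_\infty = \bigcup_m L_m$, while $m_n \to \infty$. There is no genuine obstacle here. The only points needing care are the disjointness of the cosets $H_n\alpha^j$, which uses $G_n \le L_\infty$ and $G/L_\infty\cong\mathbb{Z}$, and the fact that $G_n$ is normal, which is what makes $H_n$ a subgroup.
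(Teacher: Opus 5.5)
Your proposal is correct and follows essentially the same route as the paper: both observe that $G_n L_{m_n}$ is a finite subgroup of $L_{\infty}$, use distinctness of the cosets $G_n L_{m_n}\alpha^j$ to get boundary ratio $1/(2m_n+1)$ for $\alpha^{\pm 1}$, and show $F_n\beta^{\pm 1} = F_n$ via $\alpha^j\beta^{\pm 1}\alpha^{-j} \in L_{m_n}$. You spell out details the paper only asserts: why $G_nL_{m_n}$ is a subgroup (normality of $G_n$), and why the cosets are disjoint (via $G/L_{\infty}\cong\mathbb{Z}$ generated by $\alpha L_{\infty}$).
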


\begin{proof}
We shall check the definition of F\o lner sequence 
given above applies, with $T$ the symmetric closure of $S$. 
Note that $G_n L_{m_n}$ is a finite subgroup of $L_{\infty}$. 
Let $g \in G_n$, $l \in L_{m_n}$ and $\lvert j \rvert \leq m_n$. 
Then $(gl \alpha^j) \beta^{\pm 1} = gl (\alpha^j \beta \alpha^{-j})^{\pm 1} \alpha^j \in F_n$, and since this holds for all such $g$, $l$ and $j$ 
we have $F_n \beta = F_n$. 
Meanwhile $(gl \alpha^j)\alpha = gl \alpha^{j+1}$, 
which lies in $F_n$ unless $j=m_n$, hence: 
$$ \frac{\lvert F_n \alpha \setminus F_n \rvert}{\lvert F_n \rvert} 
= \frac{\lvert G_n L_{m_n} \rvert}{(2m_n +1)\lvert G_n L_{m_n} \rvert} = \frac{1}{2m_n +1}$$
(since the cosets $G_n L_{m_n} \alpha^i$ are distinct as $i$ ranges over $\mathbb{Z}$). 
Similarly: 
$$\frac{\lvert F_n\alpha^{-1} \setminus F_n}{\lvert F_n \rvert} \rvert
\leq \frac{1}{2m_n +1}$$ and we have the desired conclusion. 
\end{proof}

\begin{lem} \label{FolnerConjLem}
Let $g \in L_{\infty}$ and for $n \in \mathbb{N}$ set: 
$$E_n (g) = \lbrace f \in F_n : fgf^{-1} \in G_n L_{m_n} \rbrace.$$
Then $\lvert E_n (g) \rvert / \lvert F_n \rvert \rightarrow 1$ 
as $n \rightarrow \infty$. 
\end{lem}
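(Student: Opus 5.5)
Since $g \in L_{\infty} = \bigcup_m L_m$, fix $M$ with $g \in L_M$. The plan is to show that for every $f = hl\alpha^j \in F_n$ with $h \in G_n$, $l \in L_{m_n}$ and $\lvert j \rvert \leq m_n - M$, we have $fgf^{-1} \in G_n L_{m_n}$. The exceptional elements, those with $m_n - M < \lvert j \rvert \leq m_n$, make up at most a proportion $2M/(2m_n+1)$ of $F_n$. This proportion tends to $0$ because $(m_n)$ is increasing, hence unbounded. For the counting we use, as in the proof of Proposition \ref{NeumFolnerProp}, that the cosets $G_n L_{m_n}\alpha^j$ are pairwise distinct and each has size $\lvert G_n L_{m_n}\rvert$.

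For the main step, we will check that $\alpha^j L_M \alpha^{-j} \leq L_{M+\lvert j \rvert}$. This holds because conjugation by $\alpha^j$ sends each generator $\alpha^i\beta\alpha^{-i}$ (with $\lvert i \rvert \leq M$) to $\alpha^{i+j}\beta\alpha^{-(i+j)}$, and $\lvert i+j \rvert \leq M + \lvert j \rvert$. So if $\lvert j \rvert \leq m_n - M$, then $g' := \alpha^j g\alpha^{-j} \in L_{m_n}$. Next, $l g' l^{-1} \in L_{m_n}$ since $L_{m_n}$ is a subgroup. Finally, $h(lg'l^{-1})h^{-1}$ lies in $G_n L_{m_n}$. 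To see this, recall that $G_n$ is normal in $G$ (Proposition \ref{TailHomProp}), so $G_n L_{m_n}$ is a subgroup containing both $h$ and $lg'l^{-1}$. Hence $fgf^{-1} = h\,l\,g'\,l^{-1}h^{-1} \in G_n L_{m_n}$.

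The only point needing a little care is the counting. If the representation $f = hl\alpha^j$ is not unique, the condition "$\lvert j \rvert \leq m_n - M$" must still be well defined. It is, because $j$ is determined by $f$: we have $\tau(f) \in \tau(G_nL_{m_n})a^j$, and $\tau(G_nL_{m_n}) \leq B(W)$, so $j$ is read off from the image of $\tau(f)$ in $W/B(W) \cong \mathbb{Z}$. Thus $\lvert F_n \setminus E_n(g) \rvert \leq 2M\lvert G_nL_{m_n}\rvert$ while $\lvert F_n \rvert = (2m_n+1)\lvert G_nL_{m_n}\rvert$, which gives the claim. I do not expect any step to be a serious obstacle; the argument is essentially the observation that conjugation by $\alpha^j$ shifts the window of generators.
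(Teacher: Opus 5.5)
Your proof is correct and follows essentially the same route as the paper. You fix $M$ with $g\in L_M$, use that conjugation by $\alpha^j$ carries $L_M$ into $L_{m_n}$ when $\lvert j\rvert\le m_n-M$, absorb the $G_n$-factor using normality of $G_n$, and count via the disjoint cosets $G_nL_{m_n}\alpha^j$. The only differences are cosmetic: you handle conjugation by $h$ by noting that $G_nL_{m_n}$ is a subgroup, where the paper uses a commutator identity, and you make explicit via $\tau$ why $j$ is determined by $f$, which the paper leaves implicit in the distinctness of the cosets.
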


\begin{proof}
Let $m \in \mathbb{N}$ be such that $g \in L_m$, 
let $n$ be sufficiently large that $m_n > m$, 
let $h \in G_n$ and $l \in L_{m_n}$. 

Note that $\alpha^{\pm 1} L_m \alpha^{\mp 1} \leq L_{m+1}$. 
Thus $(l \alpha^j) g (l \alpha^j)^{-1} \in L_{m_n}$ for all 
$l \in L_{m_n}$ and $\lvert j \rvert \leq m_n - m$. For such $j$, 
$$(hl \alpha^j) g (hl \alpha^j)^{-1} = [h^{-1},((l \alpha^j) g (l \alpha^j)^{-1})^{-1}]\cdot (l \alpha^j) g (l \alpha^j)^{-1} $$
which lies in $G_n L_{m_n}$ since $G_n$ is normal in $G$. 
Since all $\alpha^i$ lie in distinct cosets of $G_n L_{m_n}$, 
we have: $$\lvert E \rvert \geq (2(m_n-m)+1) \lvert G_n L_{m_n} \rvert \text{ and } \lvert F_{m_n} \rvert = (2m_n + 1)\lvert G_n L_{m_n} \rvert,$$
whence the required bound. 
\end{proof}

Let $W_m = C_3 \wr (\mathbb{Z}/(2m + 1)\mathbb{Z})$ 
and set $B(W_m) = \bigoplus_{\mathbb{Z}/(2m + 1)\mathbb{Z}} C_3 \vartriangleleft W_m$. 
Let $a_{(m)}$ be a generator for $\mathbb{Z}/(2m + 1)\mathbb{Z}$, 
and for $-m \leq i \leq m$ let $b_{(m),i} \in B(W_m)$ 
be a generator for the copy of $C_3$ supported at 
$i+(2m + 1)\mathbb{Z}$, so that 
$a_{(m)} b_{(m),i} a_{(m)} ^{-1} = b_{(m),j}$ with 
$j \equiv i + m \mod (2m+1)$. 
Then there is a unique surjective homomorphism 
$\rho_m : W \rightarrow W_m$ 
satisfying $\rho_m(a) = a_{(m)}$ and $\rho_m(b_0) = b_{(m),0}$. 
It is easy to see that, 
for $(F_n)$ the F\o lner sequence for $G$ described above, 
the restriction of $\rho_{m_n}$ to $\tau(F_n)$ is a bijection onto $W_{m_n}$. 
Recall that $G_n =\Alt(d(1))\times\ldots\times\Alt(d(n-1)) \leq \ker(\tau)$. 

\begin{lem} \label{FutureHomLem}
Let $P_n = G_n \times W_{m_n}$ and let: 
$$\phi_n =(\pi_1 \times\ldots\times\pi_{n-1} \times (\rho_{m_n}\circ \tau)) : 
G \rightarrow P_n.$$
Then the restriction $\phi_n |_{F_n} : F_n \rightarrow P_n$ 
of $\phi_n$ to $F_n$ is a bijection, 
and $\phi_n (G_n L_{m_n}) = G_n \times B(W_{m_n}) \leq P_n$.  
\end{lem}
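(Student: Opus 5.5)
We prove the two assertions of Lemma \ref{FutureHomLem} together, by a cardinality count combined with an injectivity argument. We first record $\lvert F_n \rvert = (2m_n+1)\lvert G_n L_{m_n}\rvert$, which was established in the proof of Proposition \ref{NeumFolnerProp}. We then determine $\phi_n$ on the finite subgroup $G_n L_{m_n}$. The image of $L_{m_n}$ under $\rho_{m_n}\circ\tau$ is generated by the elements $\rho_{m_n}(b_j)$ for $\lvert j\rvert\leq m_n$. These are $2m_n+1$ distinct coordinate generators of $B(W_{m_n})$, because the residues $j \bmod 2m_n+1$ for $\lvert j \rvert \leq m_n$ are distinct. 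Hence $\rho_{m_n}\circ\tau$ maps $L_{m_n}$ onto $B(W_{m_n})$. Since $G_n\leq\ker(\tau)$, the map $\rho_{m_n}\circ\tau$ kills $G_n$. The map $\pi_1\times\cdots\times\pi_{n-1}$ restricts to the identity on $G_n=\bigoplus_{k<n}\Alt(d(k))$.

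The key step is to show that $\phi_n$ is injective on $G_nL_{m_n}$. Let $g\in G_n$ and $l\in L_{m_n}$, and suppose $\phi_n(gl)=e$. Then $\tau(l)=e$, so $l\in\ker(\tau)=\bigoplus_k\Alt(d(k))$. Moreover $\pi_k(g)\pi_k(l)=e$ for each $k<n$, so $gl$ is supported only on coordinates $k\geq n$. We must show $gl=e$. By Proposition \ref{3CycProp}, any coordinate $k$ with $r(k),\,d(k)-2r(k)\geq 2m_n+1$ satisfies $\pi_k(l)=e$, because $\tau(l)=e$. The danger lies in the finitely many coordinates $k\geq n$ where this inequality fails. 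So the statement needs the sequence $(m_n)$ to be chosen, or already assumed elsewhere, so that $r(n),\,d(n)-2r(n)\geq 2m_n+1$. Since $r$ is increasing and $d\geq 3r$, the inequality then persists for every $k\geq n$, provided $m_n$ grows slowly enough, e.g.\ $r(n)\geq 2m_n+1$. I would make this choice explicit, as it seems implicit in the construction. With it, $\pi_k(gl)=\pi_k(l)=e$ for all $k\geq n$, so $gl=e$. This step, namely making the growth condition on $(m_n)$ precise, is the main obstacle.

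Injectivity on $G_nL_{m_n}$ now gives $\phi_n(G_nL_{m_n})=G_n\times B(W_{m_n})$. Indeed, the image is contained in $G_n\times B(W_{m_n})$. It contains $G_n\times\{e\}$, since $\phi_n(g)=(g,e)$ for $g \in G_n$. It also projects onto $B(W_{m_n})$ in the second factor, since $\rho_{m_n}\circ\tau$ maps $L_{m_n}$ onto $B(W_{m_n})$. Together these give the whole product. It remains to treat the powers of $\alpha$. The second coordinate of $\phi_n(\alpha^j)$ is $a_{(m_n)}^j$, and these are pairwise distinct modulo $B(W_{m_n})$ for $\lvert j\rvert\leq m_n$. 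The cosets $\phi_n(G_nL_{m_n})\phi_n(\alpha^j)$ are therefore disjoint, and each has size $\lvert G_nL_{m_n}\rvert$. So $\phi_n|_{F_n}$ is injective. Its image has size $(2m_n+1)\lvert G_n\rvert\,\lvert B(W_{m_n})\rvert=\lvert P_n\rvert$, so $\phi_n|_{F_n}$ is a bijection.
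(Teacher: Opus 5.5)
Your proof is correct and follows the paper's route. Both show that $\phi_n$ maps $G_nL_{m_n}$ bijectively onto $G_n\times B(W_{m_n})$: surjectivity comes from the images of the $\alpha^j\beta\alpha^{-j}$ together with $G_n$ mapping identically, and injectivity from $\tau(l)=e$ (using that $\rho_{m_n}$ is injective on $\tau(L_{m_n})$) plus Proposition~\ref{3CycProp}. Both then assemble $F_n$ from the $2m_n+1$ cosets $G_nL_{m_n}\alpha^j$.

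You are also right that a condition such as $2m_n+1\le r(n)$ is genuinely needed. The paper uses it silently when it applies Proposition~\ref{3CycProp} to all $k\ge n$, and without it the lemma fails. For example, if $m_n\ge r(n)$, then $x=[\alpha^{r(n)}\beta\alpha^{-r(n)},\beta]\in L_{m_n}$ has $\tau(x)=e$ but $\pi_n(x)\neq e$. Taking $g\in G_n$ with $\pi_k(g)=\pi_k(x)^{-1}$ for $k<n$, the element $gx\neq e$ of $F_n$ has the same image as $e$.
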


\begin{proof}
We shall prove that $\phi_n$ sends $G_n L_{m_n} \leq G$ 
bijectively onto $G_n \times B(W_{m_n})$. 
Since $F_n$ is the disjoint union of the cosets 
$G_n L_{m_n} \alpha^j$ (for $\lvert j \rvert \leq m_n$), 
$G_n \times B(W_{m_n})$ has index $(2m_n + 1)$ in $P_n$ 
and the $\phi_n (\alpha^j) \in G_n \times \lbrace a_{(m_n),j} \rbrace$ lie in distinct cosets of $G_n \times B(W_{m_n})$ in $P_n$, 
it follows from this that $\phi_n$ maps $F_n$ bijectively onto $P_n$. 

For surjectivity, 
note that the elements 
$\alpha^j \beta \alpha^{-j} \in L_{m_n}$ (for $\lvert j \rvert \leq m_n$) 
are mapped by $(\rho_{m_n}\circ \tau)$ to $b_{(m_n),j}$, 
which generate $B(W_{m_n})$. 
Thus given $(g,b) \in G_n \times B(W_{m_n})$, 
we may take $l \in L_{m_n}$ with $\phi_n (l) = (h,b)$ 
for some $h \in G_n$. 
On the other hand, 
$(\pi_1 \times\ldots\times\pi_{n-1})$ sends $G_n$ identically to itself, 
and since $G_n \leq \ker(\tau)$, 
it follows that $\phi_n (gh^{-1}l) = (g,b)$. 

For injectivity, 
let $g \in G_n$, $l \in L_{m_n}$ and suppose $\phi_n (gl) = e$. 
Since $\tau(g) = e$, 
we have $(\rho_{m_n}\circ \tau)(gl)=(\rho_{m_n}\circ \tau)(l) = e$. 
But $\rho_{m_n}$ is injective on $\tau(L_{m_n})$, so $\tau(l)=e$. 
By Proposition \ref{3CycProp}, for all $k \neq n$
$\pi_{k} (gl) = \pi_k (l) = e$. 
But as $\phi_n (gl) = e$, we also have $\pi_k(gl)=e$ for $k \leq n-1$, 
hence $gl=e$. 
\end{proof}

\subsection{Approximation properties}

A second application of Proposition \ref{AltWreathLocalProp} is 
an upper bound on the LEF growth of many 
generalized B.H. Neumann groups. 

\begin{propn} \label{LEFUBProp}
There exists $C > 0$ such that, if $d$ and $r$ satisfy: 
$$r(n),d(n)-2r(n) \geq n$$
for all $n \in \mathbb{N}$, then: 
\begin{equation*}
\mathcal{L}_G ^S (l) \leq \exp (Cl^2 \log(l))
\end{equation*}
for all $l \in \mathbb{N}$. 
\end{propn}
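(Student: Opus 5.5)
The plan is to build, for each $l$, an explicit finite group $Q$ of order at most $\exp(Cl^2\log l)$. We define $\psi$ on $B_S(l)$ by evaluating words, and $Q$ must be chosen so that, for every freely reduced word $w(x,y)$ of length at most $2l$, $w(\alpha,\beta)=e$ in $G$ iff the corresponding word on the chosen images in $Q$ is trivial. Granting this, $\psi(g)=w(\hat\alpha,\hat\beta)$ for any word $w$ of length $\le l$ representing $g$ is well defined. Indeed, two representatives $u,v$ of $g$ have $u^{-1}v$ of length $\le 2l$, which is trivial in $G$ and hence trivial in $Q$. The map $\psi$ is injective by the converse direction. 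It is a partial homomorphism because, if $g,h,gh\in B_S(l)$, then $\psi(gh)$ may be computed using the concatenated word for $g$ followed by $h$, which has length $\le 2l$. The cost of the final step is therefore only the order of $Q$. The key point is that $Q$ must not involve the true groups $\Alt(d(k))$, since $d(k)$ may be arbitrarily large even for small $k$. Only the lower bounds $r(n),d(n)-2r(n)\ge n$ are assumed.

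\textbf{Step 1: large indices.} Since $w(\alpha,\beta)=e$ in $G$ iff $w(\alpha_k,\beta_k)=e$ for all $k$, we split the indices at $N=4l+1$. For $k\ge N$ we have $r(k),d(k)-2r(k)\ge 4l+1=2(2l)+1$. Proposition \ref{AltWreathLocalProp}, applied to words of length $\le 2l$, then shows that all these coordinates give the same answer: $w(\alpha_k,\beta_k)=e$ iff $w(a,b_0)=e$ in $W$. We replace $W$ by the finite quotient $W_M=C_3\wr(\mathbb{Z}/(2M+1)\mathbb{Z})$ with $M=2l+1$, via $\rho_M$. A word of length $\le 2l$ in $a,b_0$ has total $a$-exponent and lamp positions bounded by $2l$ in absolute value. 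Its image in $W_M$ therefore determines its value in $W$. This factor contributes order $(2M+1)3^{2M+1}=\exp(O(l))$.

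\textbf{Step 2: small indices $k<N$, shortening the cycle.} Here $\beta_k$ moves the points $1,1+r,1+2r$, which cut the $d(k)$-cycle of $\alpha_k$ into three arcs. We replace every arc of length exceeding $4l+2$ by an arc of length exactly $4l+2$. This gives a cyclic set of size $D_k\le 3(4l+2)$, a $D_k$-cycle $\alpha'_k$, and a $3$-cycle $\beta'_k$ on the three marked points. The claim is that, for words of length $\le 2l$, $w(\alpha_k,\beta_k)=e$ iff $w(\alpha'_k,\beta'_k)=e$. To prove it, follow the trajectory of a single point under the successive letters of $w$. Each trajectory stays within distance $2l$, along the cycle, of its starting point or of a marked point it has reached. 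The radius-$2l$ neighbourhoods in the Schreier graphs of $\langle\alpha_k,\beta_k\rangle$ and $\langle\alpha'_k,\beta'_k\rangle$ are isomorphic, and every point in one graph has a counterpart in the other with an isomorphic labelled $2l$-ball. Hence $w$ fixes every point in one action iff it does in the other. This ball-matching is the main obstacle. One must check carefully that no arc is shortened so far that a length-$2l$ walk could "see around" it, which the choice $4l+2$ is meant to rule out. One must also handle the case where the original arcs are already short: there the arc is left unchanged, so the graphs coincide locally. We take the factor $\Sym(D_k)$, of order at most $(12l+6)!$.

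\textbf{Step 3: assembling $Q$ and counting.} Set
$$Q=\prod_{k<N}\Sym(D_k)\times W_M,\qquad \hat\alpha=\big((\alpha'_k)_{k<N},a_{(M)}\big),\quad \hat\beta=\big((\beta'_k)_{k<N},b_{(M),0}\big).$$
By Steps 1 and 2, a word of length $\le 2l$ is trivial in $G$ iff it is trivial in every factor of $Q$, which is exactly the criterion required at the start. The order of $Q$ is at most $\big((12l+6)!\big)^{4l}\cdot\exp(O(l))=\exp(O(l^2\log l))$. The implied constant is absolute, independent of $d$ and $r$, which gives $\mathcal{L}_G^S(l)\le\exp(Cl^2\log l)$.
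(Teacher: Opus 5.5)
Your overall architecture is sound: tail coordinates replaced by a finite quotient of $W$, the first $4l$ coordinates replaced by $O(l)$-point permutation models, total order $\exp(O(l^2\log l))$. But your reduction step has a genuine gap. Agreement of relations of length $\le 2l$ gives a well-defined injective map on $B_S(l)$, but not a partial homomorphism. Suppose $g,h,gh\in B_S(l)$ are represented by words $u,v,z$ of length $\le l$. You then need $z(\hat\alpha,\hat\beta)=u(\hat\alpha,\hat\beta)\,v(\hat\alpha,\hat\beta)$, that is, the relation $z^{-1}uv$ of $G$, of length up to $3l$, must also hold in $Q$.

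Your remark that $\psi(gh)$ ``may be computed using the concatenated word'' presupposes exactly this. The word $uv$ has length up to $2l$, so it is not among the representatives for which you proved independence of choice, and comparing it with $z$ is a relation of length $3l$. The principle fails in general. Take $\mathbb{Z}/3l\mathbb{Z}$ and $\mathbb{Z}/N\mathbb{Z}$ with $N>3l$, each marked by a generator $x$. They have the same relations of length $\le 2l$, and evaluation $x^j\mapsto j$ is injective on the $l$-ball. Yet $x^l\cdot x^l=x^{-l}$ in the first group is sent to $2l\neq -l$.

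The repair costs only constants. For the $W_M$ factor nothing more is needed, since $g\mapsto\rho_M(\tau(g))$ is a homomorphism on all of $G$. The cycle factors $k<N$ are not quotients of $\langle\alpha_k,\beta_k\rangle$, so there you must show that words of length $\le 3l$ trivial at coordinate $k$ remain trivial in the truncated model. Your ball-matching argument gives this if you truncate long arcs to length $6l+2$ instead of $4l+2$ and work at radius $3l$. Then no radius-$3l$ ball sees around a truncated arc, and the midpoint of a truncated arc has a ball containing no marked point, matching the far points of the original arc. This yields $D_k\le 18l+6$ and $|Q|\le((18l+6)!)^{4l}\exp(O(l))=\exp(O(l^2\log l))$.

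With this change your argument is a valid, self-contained variant of the paper's. The paper first notes that $\pi_1\times\cdots\times\pi_{4l+1}$ is injective on $B_S(l)$, using Proposition \ref{AltWreathLocalProp}. It then takes a product of coordinatewise maps: for $n\le 4l$ it quotes Lemma \ref{AltLocalLem}, which supplies genuine local embeddings of radius-$l$ balls into $\Sym(15l)$, so the partial-homomorphism property is inherited factor by factor. For the last coordinate it uses a $W$-model in $\Alt(12l+3)$, playing the role of your $W_M$. Your truncated cycles amount to re-proving Lemma \ref{AltLocalLem} by hand, which is fine once the radius is set correctly.
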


The conclusion of Proposition \ref{LEFUBProp} 
is complementary to Theorem 4.9 of \cite{Brad}, 
where an upper bound of the order of $\exp(\exp(l))$ 
was given on the LEF growth of a certain 
family of generalized B.H. Neumann groups 
(those constructed by Bou-Rabee and Seward in \cite{BoRaSewa}). 
The common ingredient of both bounds is the next Lemma, 
which is proved as Example 4.3 of \cite{Brad}. 
Our bound here is stronger than in the result of \cite{Brad}, 
owing to the availability of Proposition \ref{AltWreathLocalProp}. 

\begin{lem} \label{AltLocalLem}
Let $d \geq 3$ and let $t_1 , t_2 \in \Sym(d)$ be 
an $n$-cycle and a $3$-cycle, respectively. 
For all $l,d' \in \mathbb{N}$, if $d' \geq 9 l + 3$ then 
there is a local embedding $B_T(l) \rightarrow \Sym(d')$, 
where $T = \lbrace t_1,t_2 \rbrace$. 
\end{lem}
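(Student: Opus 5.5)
The plan is to build, inside $\Sym(d')$, a pair $(s_1,s_2)$ consisting of a long cycle and a $3$-cycle that ``looks the same'' as $(t_1,t_2)$ to all words of bounded length. I would then take the local embedding to be $w(t_1,t_2)\mapsto w(s_1,s_2)$ on $B_T(l)$. If $d\leq d'$ there is nothing to do, since $\Sym(d)\hookrightarrow\Sym(d')$ is an injective homomorphism, so I assume $d>d'$.

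\textbf{Step 1: reduce to a statement about trivial words.} The map is well defined and is an injective partial homomorphism provided that, for every word $w(x,y)$ of length at most $L=3l$,
\[ w(t_1,t_2)=e \iff w(s_1,s_2)=e . \]
Well-definedness and injectivity only need this for $u^{-1}v$ with $\lvert u\rvert,\lvert v\rvert\leq l$. For the partial homomorphism property, suppose $g=u(T)$, $h=v(T)$ and $gh=z(T)$ with all three words of length at most $l$. Then $uvz^{-1}$ has length at most $3l$, which is where $L=3l$ comes from.

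\textbf{Step 2: normal form for words.} Write such a word as
\[ w=x^{k}\prod_i x^{j_i}y^{\pm1}x^{-j_i}, \qquad \lvert k\rvert,\lvert j_i\rvert\leq L. \]
Then $w(t_1,t_2)=t_1^k\sigma$, where $\sigma$ is a product of the $3$-cycles $t_1^{j}t_2t_1^{-j}$, all supported in
\[ U=\bigcup_{\lvert j\rvert\leq L}t_1^{j}(\supp t_2). \]
The same description holds for $(s_1,s_2)$.

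\textbf{Step 3: construct $(s_1,s_2)$.} Let $Y$ be the set of points of the $t_1$-cycle lying within $t_1$-distance $L$ of $\supp(t_2)$. This is a union of at most three arcs, possibly overlapping or wrapping around. Define $s_1$ on a set of size $d'$ as a single cycle that traverses these arcs in the same cyclic order as $t_1$. Each gap of the $t_1$-cycle between consecutive arcs is replaced by a gap of length at least $L+1$, or left unchanged if it is already shorter than that. Set $s_2=t_2$ (its support lies in $Y$). Any leftover points of the $d'$-set are absorbed by lengthening a gap.

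\textbf{Step 4: compare the two actions.} For $\lvert j\rvert\leq L$, the conjugates $s_1^{j}s_2s_1^{-j}$ and $t_1^jt_2t_1^{-j}$ are ``the same'' $3$-cycles under the obvious identification of the neighbourhoods of $\supp t_2$. Coincidences among their supports are also identical, because shortened gaps still exceed $L$. Hence $\sigma$ is trivial for $t$ iff it is trivial for $s$. Moreover, both cycles have length greater than $L$ as soon as $d,d'>L$, so $t_1^k\sigma=e$ forces $k=0$ in both settings (look at a point off $U$, or use that $\sigma$ has small support). Together these give the equivalence of Step 1.

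\textbf{Main obstacle.} The hardest part is the bookkeeping in Step 3: the three points of $\supp t_2$ can be close together on the cycle, so arcs merge, and the $d'$-set must still accommodate all of $Y$ together with gaps of length greater than $L$. A naive count gives something like $3(2L+1)$ points. To reach the sharper threshold $9l+3=3(L+1)$, I would try placing the three points of $\supp t_2$ so that only distances up to $L$ between them, rather than full neighbourhoods, must be faithfully preserved. Gaps shorter than $L+1$ are kept exactly, and each of the three inter-point gaps is truncated to $L+1$, giving at most $3(L+1)$ points in total. Padding with fixed points, or lengthening one gap, then fills the set up to exactly $d'$.
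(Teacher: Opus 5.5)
The paper gives no proof of this lemma itself (it defers to Example 4.3 of \cite{Brad}), so I judge your proposal on its own terms. Your Step 1 reduction to words of length $L=3l$ is correct. Your final model is also the right object: keep every gap between consecutive points of $\supp t_2$ that is at most $L$, replace every longer gap by one of length at least $L+1$, and put the surplus into a truncated gap, which exists because $d>d'\ge 3(L+1)$. Its count, $3(L+1)=9l+3$, is exactly the lemma's constant.

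The gap is that Step 4 verifies your first, arc-preserving model, not this one. It claims the coincidence pattern among the supports of all $t_1^{j}t_2t_1^{-j}$ with $\lvert j\rvert\le L$ is reproduced ``because shortened gaps still exceed $L$''. But two such exponents can differ by up to $2L$, so that claim would need truncation at $2L+1$, i.e.\ about $18l$ points. With truncation at $L+1$ it is false. Take $p_2=t_1^{L+5}(p_1)$ with the other two gaps huge. The model then has $q_2=s_1^{L+1}(q_1)$, so the conjugates with $j=L$ and $j=-1$ share the point $s_1^{L}(q_1)$, while their counterparts in $\Sym(d)$ have disjoint supports. Your remark that only distances up to $L$ matter is the right idea, but you assert it without justification, and Step 4 relies on the stronger, false claim.

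The missing observation is that a single word never uses conjugating exponents more than $L$ apart. Write $w=x^{a_0}y^{e_1}x^{a_1}\cdots y^{e_m}x^{a_m}$. Then $w(t_1,t_2)=\bigl(\prod_i t_1^{c_i}t_2^{e_i}t_1^{-c_i}\bigr)t_1^{k}$ with $c_i=a_0+\cdots+a_{i-1}$ and $k=\sum_s a_s$, so $\lvert c_i-c_{i'}\rvert\le\sum_s\lvert a_s\rvert\le L$.

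Now use $d,d'>2L$. A coincidence $t_1^{c}(p_i)=t_1^{c'}(p_{i'})$ with $\lvert c-c'\rvert\le L$ forces $c=c'$ when $i=i'$. When $i\neq i'$, it happens exactly when a single gap, or a sum of two consecutive gaps, equals $\pm(c-c')$. Such a quantity is at most $L$ in the model iff it is at most $L$ in $\Sym(d)$, and then it has the same value: either way its summands are at most $L$, hence untouched by truncation. So $t_1^{c_i}(p_j)\mapsto s_1^{c_i}(q_j)$ is a well-defined bijection conjugating each relevant $3$-cycle to its counterpart, and $\sigma$ is trivial for $t$ iff it is trivial for $s$.

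A smaller point concerns the case $k\neq 0$: cycle length greater than $L$ is not a valid reason there. For a $3g$-cycle $s_1$ and $s_2=(0\;g\;2g)$ in cycle coordinates, $(s_2^{-1}s_1)^g=e$ is a relation of length $2g$. Instead use that $\sigma$ moves at most $3m\le 3(L-\lvert k\rvert)<d'$ points while $s_1^{k}$ moves all $d'$ of them. This is where it matters that $s_1$ is a full $d'$-cycle, and likewise that $d>3L$ for $t_1$.
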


\begin{proof}[Proof of Proposition \ref{LEFUBProp}]
Fix $l \in \mathbb{N}$ and set 
$Q_l = \Sym(15l)^{4l+1}$, 
so that $$\lvert Q_l \rvert = ((15l)!)^{4l+1}l \leq \exp (75l^2 \log(15l)) 
\leq \exp (Cl^2 \log(l))$$ for $C>0$ a sufficiently large constant. 
It therefore suffices to show that there is a local embedding 
$B_S(l) \rightarrow Q_l$. 
First, let: $$\pi = (\pi_1 \times \cdots \times \pi_{4l+1}) : G \rightarrow \prod_{n=1} ^{4l+1} \Alt(d(n)); $$
we claim that the restriction of $\pi$ to $B_S (l)$ is injective. 
For if $e \neq g \in \ker (\pi) \cap B_S (l)$, 
then there exists $n > 4l+1$ such that $\pi_n (g) \neq e$. 
Letting $w \in F(x,y)$ be a nontrivial reduced word of length at most $l$ 
such that $g=w(\alpha,\beta)$, 
we have from Proposition \ref{AltWreathLocalProp} 
that $w(\alpha_{4l+1},\beta_{4l+1}) = e$ in $\Alt(d(4l+1))$ 
(since $g \in \ker(\pi) \leq \ker(\pi_{4l+1})$) 
so that $w(a,b_0) = e$ in $W$ 
and hence $w(\alpha_n,\beta_n) = e$ in $\Alt(d(n))$, which is a contradiction, 
and the claim is proved. 

We note also that: $$\pi(B_S(l)) 
\subseteq B_{\pi_1(S)} (l) \times \cdots \times B_{\pi_{4l+1}(S)} (l),$$
thus it suffices to construct, for each $1 \leq n \leq 4l+1$, 
a local embedding $\rho_n : B_{\pi_n(S)} (l) \rightarrow \Sym(15l)$, 
so that: $$(\rho_1 \times \cdots \times \rho_{4l+1}) \circ \pi|_{B_S(l)} 
: B_S(l) \rightarrow Q_l$$
is the desired local embedding. 
For $n \leq 4l$, we can define $\rho_n$ using Lemma \ref{AltLocalLem} 
(if $d(n) < 9l + 3$, we simply take $\rho_n$ to be inclusion). 
For the case $n=4l+1$ let: $$\overline{\alpha} = \big(1 \; 2 \; \cdots \; 12l+3 \big) , \overline{\beta} = \big(1 \; (4l+2) \; (8l+3)\big) \in \Alt(12l+3).$$
Then by Proposition \ref{AltWreathLocalProp} again, 
for any freely reduced word $v(x,y) \in F(x,y)$ of length at most $2l$, 
$w(\alpha_{4l+1},\beta_{4l+1}) = e$ in $\Alt(d(4l+1))$ 
iff $w(a,b_0) = e$ in $W$ 
iff $w(\overline{\alpha},\overline{\beta}) = e$ in $\Alt(12l+3)$. 
It follows that there is a local embedding 
$\rho_{4l+1} : B_{\pi_{4l+1}(S)} (l) \rightarrow \Alt(12l+3)$ 
given by $\rho_{4l+1} (w(\alpha_{4l+1},\beta_{4l+1})) = w(\overline{\alpha},\overline{\beta})$. 
\end{proof}

Next, we have our main stability result, 
the proof of which is deferred until the next Section. 

\begin{thm} \label{MainStabilityThm}
Suppose $d$ and $r$ satisfy: 
(i) $d$ takes only prime values; 
(ii) $r$ is strictly increasing, and 
(iii) for all $n \in \mathbb{N}$, $3r(n) \leq d(n)$. 
Then $G$ is stable. 
\end{thm}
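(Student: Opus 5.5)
The plan is to follow the strategy of Levit and Lubotzky for the classical B.H. Neumann groups. By Proposition \ref{NeumFolnerProp}, $G$ is amenable. So the criterion of Becker, Lubotzky and Thom applies: $G$ is stable iff every invariant random subgroup $\mu \in \IRS(G)$ is \emph{cosofic}, i.e.\ is a weak-$*$ limit of IRS of the form $\mathrm{Stab}_*(\text{uniform measure})$ coming from actions of $G$ on finite sets. Since the cosofic IRS form a closed convex set, it will suffice to treat ergodic $\mu$. I will split according to where $\mu$-almost every subgroup $H$ sits relative to the normal subgroup $L_\infty = \ker(G \to \mathbb{Z})$. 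By ergodicity, either $H \le L_\infty$ almost surely, or $H L_\infty$ has some fixed finite index $k \ge 1$ in $G$ almost surely.

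\emph{Case $H \le L_\infty$.} Here I would use Weiss approximation, with the explicit F\o lner sets $F_n$ of Proposition \ref{NeumFolnerProp} and the finite quotients $P_n$ of Lemma \ref{FutureHomLem}, onto which $F_n$ maps bijectively. Given $H$, let $F_n$ act on itself, and transport this to the finite set $P_n/\phi_n(H\cap G_nL_{m_n})$-type cosets. Concretely, I would realise a finite $G$-set by letting $G$ act through $\phi_n$ on $P_n$ and quotienting fibrewise by the finite subgroups $\phi_n(fHf^{-1} \cap G_n L_{m_n})$. By Lemma \ref{FolnerConjLem}, for each fixed $g \in L_\infty$ and most $f \in F_n$ we have $fgf^{-1} \in G_n L_{m_n}$. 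By Lemma \ref{FutureHomLem}, $\phi_n$ is injective on $G_n L_{m_n}$. Hence membership of $g$ in the stabilisers is read off correctly at all but a vanishing proportion of points, and elements outside $L_\infty$ act freely on most points because $\rho_{m_n}$ separates $a^j$ for $|j|\le 2m_n$. Averaging over $H \sim \mu$ then gives finite actions whose stabiliser IRS converge to $\mu$.

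\emph{Case $[G : HL_\infty] = k < \infty$.} Then $H$ contains an element $h$ with $\tau(h) = a^k b$ for some $b \in B(W)$. I would push $\mu$ forward under $\tau$ to an IRS of $W = C_3 \wr \mathbb{Z}$ and invoke Bowen--Grigorchuk--Kravchenko. From their analysis, such an IRS with nontrivial $\mathbb{Z}$-image is supported on subgroups that are ``periodic'': they contain a finite-index subgroup of $B(W)$ invariant under $a^{k'}$, up to finitely many coordinates. I then need to lift this to $G$. The intersection $H \cap \bigoplus_n \Alt(d(n))$ is normalised by $h$. Since each $\Alt(d(n))$ is simple, and conjugation by $h$ acts on $\pi_n$ by something close to $\alpha_n^k$, I expect to show that $H \cap \Alt(d(n))$ is either trivial or all of $\Alt(d(n))$. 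Using Propositions \ref{AltWreathLocalProp} and \ref{3CycProp}, together with the fact that $\pi_n(L_m)=\Alt(d(n))$ for large $m$, I further expect that $H$ contains all but finitely many factors $\Alt(d(n))$. Then $H$ has finite index in $G$, or is a finite-index subgroup of a group built from a finite quotient of $G$ and a cosofic subgroup of $W$; either way $\mu$ is cosofic, using that finite-index subgroups give honest finite actions and that IRS of $W$ are cosofic by BGK.

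The main obstacle is the second case: lifting the BGK restrictions on IRS of $W$ to $G$ through the non-split extension with kernel $\bigoplus_n \Alt(d(n))$. The delicate point is controlling how $H$ meets the kernel when $H$ contains elements of nonzero $\alpha$-exponent. My first attempt would be a Borel--Cantelli/ergodicity argument on the events ``$\Alt(d(n)) \le H$''. If that fails for only finitely many $n$ almost surely, the problem reduces to a finite quotient times $W$, which the BGK classification handles.
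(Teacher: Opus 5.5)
Your first case ($\mu$ carried by $\Sub(L_\infty)$) is essentially the paper's argument. It is Weiss approximation along the F\o lner sets $F_n$ with the finite-index subgroups $K_n=(H\cap G_nL_{m_n})\ker(\phi_n)$, which is exactly the finite $G$-set $P_n/\phi_n(H\cap G_nL_{m_n})$ you describe. Elements outside $L_\infty$ are handled by the $\mathbb{Z}/(2m_n+1)$ quotient, as you say.

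The second case, however, has a genuine gap, located exactly where you write ``I expect''. On the $W$ side, the only input needed from Bowen--Grigorchuk--Kravchenko is that $W$ has countably many subgroups not contained in $B(W)$. Hence $\tau_{\ast}\mu$, which is carried by such subgroups, is atomic, and its atoms are almost-normal. An almost-normal subgroup of $W$ not inside $B(W)$ has finite index (Proposition \ref{WreathANProp}). So the right target is to show that $\mu$ is a \emph{finite-index} IRS. Appealing to cosoficity of IRS of $W$ would not help, since nothing in your plan transports cosoficity through the kernel $\bigoplus_n\Alt(d(n))$.

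The real missing idea is in lifting $\lvert W:\tau(H)\rvert<\infty$ to $\lvert G:H\rvert<\infty$. Your dichotomy for $H\cap\Alt(d(n))$ does not follow from simplicity plus normalisation by a single $h$: a subgroup of $\Alt(d(n))$ normalised by a $d(n)$-cycle can be cyclic. The dichotomy holds only once you know $\pi_n(H)=\Alt(d(n))$. The results you cite (Propositions \ref{AltWreathLocalProp}, \ref{3CycProp}, and $\pi_n(L_m)=\Alt(d(n))$) cannot give this, because $H$ need not contain any $L_m$.

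The paper obtains $\pi_n(H)=\Alt(d(n))$ from Babai's theorem (Theorem \ref{BabaiThm}), applied for all large $n$:
\begin{itemize}
\item $\pi_n(H)$ contains $\alpha_n^k$, which is a $d(n)$-cycle because $d(n)$ is a prime exceeding $\lvert k\rvert$, so $\pi_n(H)$ is primitive.
\item $\pi_n(H)$ contains $\pi_n(\tilde v)\neq e$, where $\tilde v\in L_m$ lifts some $e\neq v\in\tau(H)\cap B(W)$; the discrepancies lie in the finitely supported kernel of $\tau$.
\item By Lemma \ref{LSmallSuppLem}, $\supp(\pi_n(\tilde v))\le 6m+3<\sqrt{d(n)}/2$, so Babai applies.
\end{itemize}

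The final step is then deterministic rather than Borel--Cantelli. Set $H_0=H\cap\ker(\pi_1\times\cdots\times\pi_{n_1})$. One shows $H_0\cap\ker\tau$ is normal in $\ker\tau$, hence equals $\bigoplus_{n\in S}\Alt(d(n))$ for some $S$. Then $S$ must be cofinite: otherwise the soluble group $H_0/(H_0\cap\ker\tau)$, which embeds in $W$, would surject onto some $\Alt(d(n))$ (Lemma \ref{WreathLiftLem} and Proposition \ref{WreathLiftProp}). Without a primitivity plus small-support theorem of this kind, your second case does not go through.
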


The generalized B.H. Neumann groups, 
being subdirect products of finite groups, 
are residually finite. 
They include, however, groups of arbitrarily fast 
(full) residual finiteness growth. 
This was the content of the main result of \cite{BoRaSewa}; 
here we follow the treatment given in \cite{BradRF}. 

\begin{thm} \label{RFGrowthThm}
There exists an absolute constant $C_0 > 0$ such that 
for any increasing function $F : \mathbb{N} \rightarrow \mathbb{N}$ 
there exist functions $d , r : \mathbb{N} \rightarrow \mathbb{N}$ 
satisfying the hypotheses of 
Proposition \ref{LEFUBProp} and Theorem \ref{MainStabilityThm}, 
and such that for all $n \geq C_0$, 
\begin{equation*}
\mathcal{R}_G ^S (n) \geq F(n). 
\end{equation*}
\end{thm}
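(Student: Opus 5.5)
The plan is to follow the Bou-Rabee--Seward strategy as presented in \cite{BradRF}. For each $m$ we produce a short word in $G$ that is nontrivial in only one or a few alternating coordinates, and we show that any finite quotient injective on a ball containing it must contain a copy of a large alternating group. The functions $d$ and $r$ are then chosen inductively so that these alternating groups are huge compared with $F$.

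\textbf{Step 1 (normal subgroups of $G$ inside $\ker(\tau)$).} Let $\theta : G \to Q$ be a homomorphism to a finite group and put $N = \ker(\theta)$. By Proposition \ref{TailHomProp}, $\ker(\tau) = \bigoplus_n \Alt(d(n)) \leq G$. Each factor $\Alt(d(n))$ is normal in $G$ and simple, so $N \cap \Alt(d(n))$ is either trivial or everything. Consequently, if some $g \in \Alt(d(n))$ (viewed as a coordinate subgroup) satisfies $\theta(g) \neq e$, then $\theta$ embeds $\Alt(d(n))$ into $Q$, and hence $\lvert Q \rvert \geq d(n)!/2$.

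\textbf{Step 2 (short elements supported in a single coordinate).} Consider $x_m = [\alpha^{r(m)}\beta\alpha^{-r(m)},\beta]$, which has word length at most $4r(m)+4$. As in the proof of Proposition \ref{TailHomProp}, $\pi_n(x_m) = e$ for $n > m$ and $\pi_m(x_m) \neq e$. To also kill the coordinates $n<m$, I would impose a congruence condition on $r(m)$: require $r(m)$ to be divisible by $d(n)$ for all $n<m$. Then $\alpha_n^{r(m)} = e$, so $\pi_n(x_m) = [\beta_n,\beta_n] = e$. Thus $x_m \in \Alt(d(m))$ is nontrivial. By Step 1, any finite quotient injective on $B_S(4r(m)+4)$ has order at least $d(m)!/2$. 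This yields $\mathcal{R}_G^S(l) \geq d(m)!/2$ for all $l \geq 4r(m)+4$.

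\textbf{Step 3 (inductive choice of $d,r$).} I would choose $r(1) < d(1) < r(2) < d(2) < \cdots$ in turn. Each $r(m)$ is a multiple of $\prod_{n<m} d(n)$, exceeds $r(m-1)$, and is at least $m$. Each $d(m)$ is then a prime at least $3r(m) + m$ (ensuring $d(m)-2r(m)\geq m$), chosen large enough to dominate $F$ on the next interval. Primes of any size exist, so the hypotheses of Proposition \ref{LEFUBProp} and Theorem \ref{MainStabilityThm} hold automatically.

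\textbf{Main obstacle.} The difficulty is the circularity in matching intervals. For $l \in [4r(m)+4, 4r(m+1)+4)$ we only know $\mathcal{R}_G^S(l) \geq d(m)!/2$. However, the divisibility constraint forces $r(m+1) \geq d(m)$, so we would need $d(m)! \gg F(Cd(m))$, which fails for fast $F$.

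To break this, I would first try to decouple the single-coordinate element from divisibility. The idea is to use elements whose $\tau$-image is trivial and whose length is governed by $r(m)$ alone, for instance iterated commutators of the $3$-cycles $\alpha^{j}\beta\alpha^{-j}$ with $j \leq r(m)$. By Proposition \ref{3CycProp} such elements vanish in all coordinates $n$ with $r(n)$ large. Step 1 then shows that the quotient must faithfully contain \emph{some} $\Alt(d(n))$ with $n \leq m$ and $d(n)$ large. The choice can be arranged so that $d$ is repeated or has long plateaus: $d(n)$ jumps only at indices where $r$ has already passed the threshold needed for $F$. In that regime $d(m)$ can be chosen after the length scale $r(m)$ is fixed and independently of $r(m+1)$.

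If this bookkeeping does not close, I would fall back on the precise parameter choice of \cite{BradRF}, which is exactly where the theorem's absolute constant $C_0$ comes from.
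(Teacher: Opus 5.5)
Your Steps 1 and 2 are correct: with $d(n)\mid r(m)$ for $n<m$, the commutator $x_m$ lies in the coordinate subgroup $\Alt(d(m))$, is nontrivial, and lies in $B_S(4r(m)+4)$. But the proposal does not prove the theorem. The obstacle you flag is fatal, and the suggested way around it does not work.

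The trouble goes beyond the lower-bound method. By Proposition~\ref{AltWreathLocalProp} and $3r(n)\le d(n)$, the homomorphism $\bigl(\prod_{n:\,r(n)\le 4l}\pi_n\bigr)\times(\rho_{2l}\circ\tau)$, with $\rho_{2l}:W\to C_3\wr(\mathbb{Z}/(4l+1)\mathbb{Z})$ the natural quotient, is injective on $B_S(l)$. Hence $\mathcal{R}_G^S(l)\le 3^{4l+1}(4l+1)\prod_{n:\,r(n)\le 4l}\lvert\Alt(d(n))\rvert$. In any scheme with $r(m+1)\ge d(m)$, this bound is $\exp(O(l^2\log l))$ for $l$ just below $r(m+1)/4$, for every $m$. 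So $r$ must not be allowed to grow with $d$.

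Your decoupling idea does not achieve this. For a short $g\in\ker(\tau)$, Step 1 only gives an embedded copy of $\Alt(d(n))$ for \emph{some} $n$ with $\pi_n(g)\ne e$. Proposition~\ref{3CycProp} only kills the coordinates where $r(n)$ is large, not the low coordinates where $d(n)$ is small. Making $g$ vanish on those low coordinates is exactly what divisibility was doing for you, and plateaus in $d$ have no effect on them. Deferring to the parameter choice of \cite{BradRF} leaves the actual argument unstated.

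The missing idea is to replace divisibility by a residue-avoidance condition that costs nothing in word length, so that $r$ is fixed independently of $F$ and $d$. The paper sets $q(n)=9n^2$ and $p(n)=4(q(n)+17n)+4$. It first chooses primes $d(n)\ge\max\bigl(d(n-1)+1,Cn^2,F(p(n+1))\bigr)$. Only then does it pick $r(n)\in(q(n),q(n)+17n)$ with $r(n)<d(n)/3$ and $r(l)\not\equiv\pm r(m),\pm 2r(m)\pmod{d(m)}$ for all $l\ne m$. This is Proposition~2.9 of \cite{BradRF}; counting the excluded residues shows the window of length $17n$ has room once $d(m)\ge Cm^2$.

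With this choice, in each coordinate $m\ne l$ the $3$-cycles $\beta_m$ and $\alpha_m^{r(l)}\beta_m\alpha_m^{-r(l)}$ either have disjoint supports or coincide (the latter exactly when $d(m)\mid r(l)$, which is your case). So $x_l$ is still supported on the single coordinate $l$, but now $x_l\in B_S(p(l))$ with $p$ a fixed polynomial. Your Step 1 (equivalently Proposition~3.5 of \cite{BradRF}) then gives $\mathcal{R}_G^S(4r(n)+4)\ge d(n)!/2$. For $p(n)\le l\le p(n+1)$ this yields $\mathcal{R}_G^S(l)\ge d(n)!/2\ge F(p(n+1))\ge F(l)$. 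There is no circularity, because $p$ is fixed before $d$ is chosen.
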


\begin{proof}
For $n \in \mathbb{N}$ set $q(n)=9n^2$, $p(n)=4(q(n)+17n)+4$ 
(a strictly increasing function) 
and let $C>0$ be a sufficiently large constant 
(to be chosen in the course of the proof). 
We define $d:\mathbb{N} \rightarrow \mathbb{N}$ recursively. 
Let $d(1)$ be a prime with $d(1) \geq C$. 
Given $d(1),\ldots,d(n-1)$, 
let $d(n)$ be a prime satisfying: 
$$d(n) \geq \max \big( d(n-1)+1 , Cn^2 , F(p(n+1)) \big)$$
so that (assuming $C$ is sufficiently large), 
$d$ and $q$ satisfy the hypotheses of Proposition 2.9 from \cite{BradRF}. 
We deduce that there exists a function 
$r : \mathbb{N}\rightarrow \mathbb{N}$ satisfying: 
\begin{itemize}
\item[(a)] For all $n \in \mathbb{N}$, 
$q(n) < r(n) < q(n) +  17 n$ and $r(n) < d(n)/3$; 

\item[(b)] For all $l,m\in \mathbb{N}$, 
if $l \neq m$ then 
$r(l) \nequiv \pm r(m),\pm 2 r(m) \mod d(m)$. 
\end{itemize}
In particular, by choice of $q$, we have $q(n+1)>r(n)$ for all $n$, 
hence $r$ is strictly increasing. 
By Proposition 3.5 of \cite{BradRF}, 
for such $d$ and $r$, $G = G(d,r)$ satisfies: 
$$\mathcal{F}_G ^S(4r(n)+4) \geq d(n)! / 2$$
for all $n$, where $\mathcal{F}_G ^S(l)$ denotes 
the minimal value $M$ such that every nontrivial element 
of $B_S(l)$ lies outside the kernel of some homomorphism 
from $G$ to some finite group of order at most $M$. 
Thus $\mathcal{R}_G ^S(l) \geq \mathcal{F}_G ^S(l)$. 
For $m \geq p(1) = 108$, let $n \in \mathbb{N}$ be such that 
$p(n+1) \geq m \geq p(n)$, then: 
\begin{align*}
\mathcal{R}_G ^S(m) \geq \mathcal{R}_G ^S(p(n)) \geq \mathcal{R}_G ^S(4r(n)+4) 
\geq d(n)! / 2 \geq d(n) \geq F(p(n+1)) \geq F(m)
\end{align*}
since $p(n) \geq 4r(n)+4$. 
\end{proof}

The goal of the next section shall be to prove Theorem \ref{MainStabilityThm}. 
For now we assume Theorem \ref{MainStabilityThm} 
and complete the proof of our Main Theorem from the Introduction. 

\begin{thm}
There exists an absolute constant $C_1 > 0$ such that 
for any nondecreasing function $F : [1,\infty) \rightarrow (0,\infty)$ 
there exists a two-generated stable group $\Gamma$ and a 
surjective homomorphism $\pi : \mathbb{F}_2 \twoheadrightarrow \Gamma$ 
such that for all $x \geq C_1$, 
$F_{\Gamma} ^{\pi} (x) \geq F(x)$. 
\end{thm}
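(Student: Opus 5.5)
We will take $\Gamma = G(d,r)$ for suitable $d,r$ supplied by Theorem \ref{RFGrowthThm}, and let $\pi : \mathbb{F}_2 \twoheadrightarrow G$ send the basis to $\alpha,\beta$. Stability then comes from Theorem \ref{MainStabilityThm}, an upper bound on LEF growth from Proposition \ref{LEFUBProp}, and Proposition \ref{LEFStabProp} (in the simplified form of the subsequent Remark) converts these into a lower bound on $F_G^\pi$. So the whole proof reduces to choosing the target residual finiteness growth function large enough that the inequality
$$\mathcal{L}_G^S\big(F_G^\pi(4l)\big)! \geq \mathcal{R}_G^S(l)$$
forces $F_G^\pi$ to exceed $F$.

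Concretely, set $\Lambda(y) = \exp(Cy^2\log y)$, with $C$ the constant of Proposition \ref{LEFUBProp}. Define an increasing function $\tilde F : \mathbb{N}\to\mathbb{N}$ by
$$\tilde F(n) = \Big\lceil \Lambda\big(F(4n+4)+1\big)\Big\rceil! + n.$$
The point of evaluating $F$ at $4n+4$ rather than $4n$ is to absorb integer rounding later. Apply Theorem \ref{RFGrowthThm} with $\tilde F$ to get $d,r$, and hence $G$ with $\mathcal{R}_G^S(n) \geq \tilde F(n)$ for $n \geq C_0$. These $d,r$ satisfy the hypotheses of both Proposition \ref{LEFUBProp} and Theorem \ref{MainStabilityThm}, so $G$ is stable and has $\mathcal{L}_G^S(l) \le \Lambda(l)$. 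Since $G$ is infinite, $\ker(\pi)\neq 1$, so the Remark applies and gives $\mathcal{L}_G^S(F_G^\pi(4l))! \geq \mathcal{R}_G^S(l)$ for $l$ at least some constant.

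Now argue by contradiction. Suppose $F_G^\pi(4l) < F(4l+4)+1$ for some large $l$. By monotonicity of $\mathcal{L}$ and the factorial,
$$\mathcal{R}_G^S(l) \le \mathcal{L}_G^S\big(F_G^\pi(4l)\big)! \le \lceil \Lambda(F(4l+4)+1)\rceil! < \tilde F(l),$$
which contradicts the choice of $G$. Hence $F_G^\pi(4l) \geq F(4l+4)$ for all large integers $l$. For real $x$, pick the integer $l$ with $4l \le x < 4l+4$; monotonicity of $F_G^\pi$ in $x$ (a smaller $\epsilon$ only shrinks the set of admissible pairs) together with monotonicity of $F$ gives $F_G^\pi(x) \geq F_G^\pi(4l) \geq F(4l+4) \geq F(x)$.

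The main obstacle is making the threshold $C_1$ absolute, independent of $F$. The constant in Theorem \ref{RFGrowthThm} is absolute, but the constant $C_1$ in Proposition \ref{StabLinearProp}, used in the Remark, depends on $\pi$ through a witness $w_0$. I would handle this first by bypassing the Remark and using Proposition \ref{LEFStabProp} directly, which holds for all $l$ and involves a maximum with $\mathcal{L}_G^S(l)!$. That second term is at most $\Lambda(l)!$, which is smaller than $\tilde F(l)$ once $l \ge C_0$, since $\tilde F(l) \geq \Lambda(F(4l+4)+1)!$ and $F>0$. It may be necessary to make this safe by replacing $F$ with $\max(F,x)$, so that $F(4l+4)+1 > l$. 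If the comparison becomes awkward, I would fall back on the fact that $w_0$ can be chosen uniformly: for instance a fixed short relation such as $[\beta,\alpha\beta\alpha^{-1}]$ for all the groups, acting nontrivially in a fixed finite model. This would give an absolute constant.
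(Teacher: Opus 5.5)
Your proposal is correct, and with the repair in your last paragraph it is essentially the paper's proof: the paper also applies Proposition \ref{LEFStabProp} directly, enlarges $F$ so that $\mathcal{R}_G^S(l)$ beats the $\mathcal{L}_G^S(l)!$ term (your replacement $F \mapsto \max(F,x)$ is genuinely needed for this, since $F>0$ alone does not make $\tilde F(l)$ exceed $\mathcal{L}_G^S(l)!$), and builds the target residual finiteness growth from $\exp(Cy^2\log y)!$ composed with $F$ to read off $F_G^\pi(2l) \geq F(2l-1)$ for $l \geq C_0$. One small slip: $\ker(\pi)\neq 1$ does not follow from $G$ being infinite (use $\beta^3=e$ instead), though on the direct route this fact is not needed.
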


\begin{proof}
Let $C>0$ be as in Proposition \ref{LEFUBProp}. 
Enlarging $F$ if required, we may assume that 
for all $l \in \mathbb{N}$ we have 
\begin{equation}
\exp \big( CF(2l-1)^2 \log (F(2l-1))\big)! > \exp \big( Cl^2 \log (l)\big)!
\end{equation}
By Theorems \ref{MainStabilityThm} and \ref{RFGrowthThm} 
there exist an absolute constant $C_0 > 0$ and functions $d$ and $r$ 
satisfying $r(n),d(n)-2r(n) \geq n$ for all $n \in \mathbb{N}$, 
such that $G=G(d,r)$ is stable and for all $l \geq C_0$, 
\begin{equation}
\mathcal{R}_{\Gamma} ^S (l) \geq \exp \big( CF(2l-1)^2 \log (F(2l-1))\big)!
\end{equation}
so that, applying Propositions \ref{LEFStabProp} and \ref{LEFUBProp}, 
we obtain $F_{\Gamma} ^{\pi} (2l) \geq F(2l-1)$, 
and hence $F_{\Gamma} ^{\pi} (l) \geq F(l)$ for all $l \geq C_0$, 
where $\Gamma = G$ and $\pi$ sends the free generators of 
$\mathbb{F}_2$ to $\alpha$ and $\beta$, respectively. 
\end{proof}

\section{Invariant random subgroups} \label{IRSSect}

For $\Gamma$ a countable group, 
let $\Sub(\Gamma) \subseteq \lbrace 0,1 \rbrace^{\Gamma}$ 
be the space of subgroups of $\Gamma$, 
equipped with the Tychonoff topology. 
There is a continuous action of $\Gamma$ on $\Sub(\Gamma)$ 
by conjugation, and an \emph{invariant random subgroup} 
(or \emph{IRS} for short) 
of $\Gamma$ is a $\Gamma$-invariant Borel probability 
measure on $\Sub(\Gamma)$. 
By $\IRS(\Gamma)$ we denote the space 
of all IRS of $\Gamma$. 
This is a compact metrizable space 
under the weak$^{\ast}$ topology, 
and forms a Choquet simplex within $\Prob (\Gamma)$. 
An IRS of $\Gamma$ is \emph{ergodic} if it is an extreme 
point of this simplex: that is, if it cannot 
be written as a proper convex combination 
of other elements of $\IRS(\Gamma)$. 
Thus there are no proper closed convex subspace of 
$\IRS(\Gamma)$ containing all ergodic IRS of $\Gamma$. 

An IRS of $\Gamma$ is called \emph{finite-index} 
if it is an atomic probability measure 
supported on finite-index subgroups of $\Gamma$, 
and is called \emph{cosofic} in $\Gamma$ if it is 
the weak$^{\ast}$-limit of finite-index IRS of $\Gamma$. 

\begin{thm}[Theorem 1.3 of \cite{BecLubTho}] \label{IRSStabCritThm}
Let $\Gamma$ be a finitely generated amenable group. 
If every ergodic IRS of $\Gamma$ is cosofic in $\Gamma$ 
then $\Gamma$ is stable. 
\end{thm}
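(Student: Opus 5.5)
This is Theorem 1.3 of \cite{BecLubTho}, and I would prove it by contradiction, passing through the space of invariant random subgroups. Suppose $\Gamma$ is amenable, every ergodic IRS of $\Gamma$ is cosofic, but $\Gamma$ is not stable. Then for some $\epsilon>0$ there are:
\begin{itemize}
\item finite subsets $E_1\subseteq E_2\subseteq\cdots$ exhausting $\ker(\pi)$;
\item $\delta_k\to 0$;
\item $(\delta_k,E_k)$-almost-solutions $\rho_k:\mathbb{F}\to\Sym(\Omega_k)$, none of which is $\epsilon$-close to a solution for $\ker(\pi)$.
\end{itemize}

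\textbf{Step 1: extract a limiting IRS of $\Gamma$.} To each $\rho_k$ attach the random subgroup $\mu_k\in\IRS(\mathbb{F})$: the law of $\mathrm{Stab}_{\rho_k}(\omega)$ for $\omega\in\Omega_k$ uniform. This is conjugation-invariant, because $\rho_k$ is an honest action of $\mathbb{F}$. By compactness we may pass to a weak$^{\ast}$-convergent subsequence $\mu_k\to\mu$. For each fixed $r\in\ker(\pi)$ we have $r\in E_k$ eventually, so the probability that $r$ lies in the stabilizer tends to $1$. Hence $\mu$ is supported on subgroups containing $\ker(\pi)$, i.e.\ $\mu$ is an IRS of $\Gamma$.

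\textbf{Step 2: show $\mu$ is cosofic.} By ergodic decomposition, $\mu$ is a barycentre of ergodic IRS of $\Gamma$, and each of these is cosofic by hypothesis. The set of cosofic IRS is closed, and it is convex: a rational convex combination of finite-index IRS is again finite-index, realised by a disjoint union of finite actions with the appropriate multiplicities. So $\mu$ is cosofic. Thus there are finite $\Gamma$-sets $Y_j$ whose stabilizer laws converge to $\mu$. Pulled back along $\pi$, their stabilizer laws in $\mathbb{F}$ agree with those of $\rho_k$ on every fixed finite ball up to errors tending to $0$.

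\textbf{Step 3: convert statistical closeness into Hamming closeness (main obstacle).} The remaining claim is that for amenable $\Gamma$, if a finite $\mathbb{F}$-set $\Omega_k$ and a finite $\Gamma$-set $Y$ have close local statistics, then after adjusting $|Y|$ to $|\Omega_k|$ there is a bijection under which the generators act $\epsilon$-closely. Here local statistics means: for a large radius $R$, the distribution of isomorphism types of labelled $R$-balls in the Schreier graphs.

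\begin{itemize}
\item \emph{Tiling.} I would fix a F\o lner set $F$ (or an Ornstein--Weiss quasi-tiling family) for $\Gamma$. Most points of $\Omega_k$ have $R$-neighbourhoods isomorphic to the corresponding pieces of the Schreier graph of a genuine $\Gamma$-action. Using these neighbourhoods, I would tile all but an $\epsilon$-fraction of $\Omega_k$ by disjoint ``F\o lner tiles'', each isomorphic to a translate $F\cdot\omega$ in some $\Gamma$-orbit.
\item \emph{Matching.} Closeness of the statistics lets me tile $Y$ with the same tile types in nearly the same proportions, so the tiles can be matched by a bijection.
\item \emph{Repair.} On the unmatched $O(\epsilon)$-fraction and on tile boundaries, I would redefine the action. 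Boundaries are small by the F\o lner property. The remaining points I would fill by a trivial $\Gamma$-action, for which Proposition~\ref{StabLinearProp}-style padding arguments handle the size mismatch.
\end{itemize}

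This combinatorial step is where amenability is essential, and it requires the most care; I expect it to be the main obstacle. It is essentially the Newman--Sohler / Elek hyperfiniteness argument.

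Once Step 3 is in place, $\rho_k$ is $\epsilon$-close to a $\Gamma$-action for large $k$. This contradicts the choice of the $\rho_k$, so $\Gamma$ is stable.
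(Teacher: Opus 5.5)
The paper does not prove this statement; it quotes it from \cite{BecLubTho}. Your architecture is that of the cited proof: take the limiting IRS of the almost-solutions, get cosoficity of that limit from the hypothesis via closedness and convexity of the cosofic IRS, and finish with a ``close local statistics implies Hamming-close'' principle for amenable $\Gamma$. Steps 1 and 2 are fine. The gap is in Step 3, which carries all the content, and specifically in the matching bullet. Closeness of the $R$-ball statistics of $\Omega_k$ and $Y$ says nothing about the tile-type frequencies of a tiling you have chosen on $\Omega_k$ by a global procedure, so there is no reason for $Y$ to admit a tiling with the same proportions. The missing idea is that the partition must be produced by a \emph{local} randomised rule (a partitioning oracle in the sense of Hassidim--Kelner--Nguyen--Onak) run on \emph{both} sets. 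Then the expected frequency of each tile type, and the expected number of cut edges, are bounded-radius statistics, and only then does closeness of statistics transfer a good partition with matching type frequencies from one set to the other. This is exactly the content of the Newman--Sohler theorem, so the honest finish is to invoke it. Take as hyperfinite family the Schreier graphs of finite $\Gamma$-sets; the argument needs only one of the two graphs in the family, since the cut-edge count is itself a local statistic. Apply it to the Schreier graph of $\rho_k$ and to $Y$ padded to size $\lvert\Omega_k\rvert$. Then note that an edit-distance bound between two edge-labelled permutation graphs gives a bijection under which each $\rho_k(x)$ is Hamming-close to the corresponding generator on $Y$.

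The tiling bullet also needs repair. In a non-free orbit the image $F\cdot\omega$ of a F\o lner set need not have small boundary. In $\mathbb{Z}^2$, the square $[0,N]^2$ together with the roughly $N^{3/2}$ points $(0,N+2i)$ is F\o lner, but its image in the Schreier graph of $\mathbb{Z}^2$ acting on a long cycle through the second coordinate is mostly isolated points. Ornstein--Weiss quasi-tilings are likewise a statement about free actions. What is true, and what verifies the hyperfiniteness hypothesis above, is that finite Schreier graphs of an amenable group are uniformly hyperfinite. To see this, take $F$ F\o lner to accuracy much smaller than $\epsilon$ and set $\nu_\omega=\frac{1}{\lvert F\rvert}\sum_{f\in F}\delta_{f\omega}$. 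Given an untiled set $U$ of proportion at least $\epsilon$, pick $\omega$ maximising $\nu_\omega(U)$, take a suitable level set of $\nu_\omega$ restricted to $U$ by a co-area argument, and remove it greedily. Finally, for the padding you should first reduce to $\lvert\Omega_k\rvert\to\infty$. This is harmless: on boundedly many points, a $(\delta_k,E_k)$-almost-solution with $\delta_k<1/\lvert\Omega_k\rvert$ solves $E_k$ exactly, and some isomorphism type then recurs infinitely often and hence solves $\ker(\pi)$, contradicting the choice of $\rho_k$.
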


For $\Delta \leq \Gamma$ a finite-index 
subgroup, a \emph{finite-to-one transversal} 
to $\Delta$ in $\Gamma$ is a finite set $T \subseteq \Gamma$ 
which is a disjoint union of transversals to $\Delta$ 
in $\Gamma$. 

\begin{lem} \label{EasyTransLem}
Let $H$ and $K$ be finite-index subgroups of the group $\Gamma$, 
with $K \leq H$. Suppose $T$ is a finite-to-one 
transversal to $K$ in $\Gamma$. 
Then $T$ is a finite-to-one 
transversal to $H$ in $\Gamma$. 
\end{lem}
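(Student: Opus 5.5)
The plan is to reduce the statement to the single-transversal case and then argue directly by counting cosets. Since $T$ is a finite-to-one transversal to $K$, by definition $T = T_1 \sqcup \cdots \sqcup T_k$, where each $T_i$ is a transversal to $K$ in $\Gamma$. If we can show that each $T_i$ is itself a disjoint union of transversals to $H$, then concatenating these decompositions over $i$ exhibits $T$ as a disjoint union of transversals to $H$, which is what we want. So it suffices to treat the case where $T$ is a single transversal to $K$. (Throughout I take transversals on one fixed side, say left cosets $gK$; the right-coset convention is handled identically.)

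Now let $T$ be a single transversal to $K$, and write $m = [H:K]$. Each left coset $gH$ is the disjoint union of exactly $m$ left cosets of $K$, and $T$ meets each of these in exactly one point. Hence $\lvert T \cap gH \rvert = m$ for every $g \in \Gamma$. Fix an enumeration $g_1H, \ldots, g_sH$ of the cosets of $H$, where $s = [\Gamma : H]$, and for each $j$ enumerate $T \cap g_jH = \lbrace t_{j,1}, \ldots, t_{j,m} \rbrace$. For $1 \leq p \leq m$ define
$$U_p = \lbrace t_{j,p} : 1 \leq j \leq s \rbrace.$$
Each $U_p$ contains exactly one element from each coset of $H$, so it is a transversal to $H$. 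The sets $U_1, \ldots, U_m$ are pairwise disjoint, and their union is $T$. This proves the single-transversal case, and therefore the lemma.

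There is no real obstacle here. The one point needing care is that $T$ meets every coset of $H$ in the \emph{same} number of points. For a single transversal this number is $[H:K]$; for the general $T$ it is $k[H:K]$, and one could equally run the argument above once, directly on $T$, using that uniform count. The only thing to keep fixed is a consistent left or right convention for cosets, matching whatever is used later in the paper.
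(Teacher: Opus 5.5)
Your proof is correct and is essentially the paper's argument: the paper observes that if each coset of $K$ meets $T$ in $n$ points, then each coset of $H$, being a union of $\lvert H:K \rvert$ cosets of $K$, meets $T$ in $n\lvert H:K \rvert$ points. Your construction of the sets $U_p$ spells out the fact the paper uses implicitly, that a finite set meeting every coset in the same number of points is a disjoint union of transversals; your reduction to a single transversal is an inessential variation.
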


We shall use the following tool for identifying cosofic 
IRS of amenable groups, 
which is a combination of Proposition 3.3 and Theorem 3.4 
from \cite{LevLub}. 

\begin{thm} \label{WeissThm}
Let $\Gamma$ be a finitely generated amenable group, 
and let $(F_n)$ be a F\o lner sequence in $\Gamma$. 
Let $\mu \in \IRS (\Gamma)$. 
Suppose that for $\mu$-almost every 
subgroup $H$ of $\Gamma$, 
there exists a sequence $(K_n)$ of finite-index subgroups 
of $\Gamma$ such that: 
\begin{itemize}
\item[(i)] For all $n$, $F_n$ is a finite-to-one transversal 
to $N_{\Gamma}(K_n)$; 

\item[(ii)] For all $g \in \Gamma$, 
\begin{equation}
p_n (g) = \frac{\lvert \lbrace f \in F_n : fgf^{-1} \in H \smalltriangleup K_n \rbrace \rvert}{\lvert F_n \rvert} 
\rightarrow 0 \text{ as }n \rightarrow \infty. 
\end{equation}
\end{itemize}
Then $\mu$ is cosofic in $\Gamma$. 
\end{thm}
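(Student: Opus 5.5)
The plan is to build, for a single well-chosen subgroup $H$, explicit finite-index IRS $\nu_n$ from the subgroups $K_n$. We then show $\nu_n \to \mu$ in the weak$^{\ast}$ topology by comparing $\nu_n$ with the empirical averages of conjugates of $H$ along $F_n$, and controlling those averages with a mean ergodic theorem.

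\textbf{Step 1: the finite-index IRS.} For a finite-index $K_n$, set
$$\nu_n = \frac{1}{\lvert F_n \rvert}\sum_{f \in F_n} \delta_{fK_nf^{-1}}.$$
The map $f \mapsto fK_nf^{-1}$ depends only on the coset $fN_{\Gamma}(K_n)$. By hypothesis (i), $F_n$ meets each such coset the same number of times. Hence $\nu_n$ is the uniform measure on the (finite) conjugacy class of $K_n$, which is a $\Gamma$-invariant atomic measure supported on finite-index subgroups, i.e.\ a finite-index IRS.

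\textbf{Step 2: comparison with $H$.} The weak$^{\ast}$ topology on $\Prob(\Sub(\Gamma))$ is determined by cylinder sets $C = \lbrace L : g_1,\dots,g_k \in L,\ h_1,\dots,h_j \notin L\rbrace$. Write
$$\lambda_{H,n} = \frac{1}{\lvert F_n \rvert}\sum_{f\in F_n}\delta_{fHf^{-1}}.$$
Now $1_C(fHf^{-1}) \neq 1_C(fK_nf^{-1})$ only if one of the tested elements $x\in\lbrace g_i,h_i\rbrace$ satisfies $x \in fHf^{-1}\smalltriangleup fK_nf^{-1}$, i.e.\ $f^{-1}xf \in H \smalltriangleup K_n$. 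Up to replacing $f$ by $f^{-1}$ (equivalently conjugating by the opposite side, which I will arrange by working with the Følner sequence $(F_n^{-1})$ or the inverted action), this gives
$$\lvert\lambda_{H,n}(C)-\nu_n(C)\rvert \leq \sum_x p_n(x),$$
which tends to $0$ by hypothesis (ii). So it suffices to find $H$, satisfying the hypotheses, with $\lambda_{H,n}\to\mu$ (possibly along a subsequence, which is harmless for cosoficity).

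\textbf{Step 3: generic points.} First reduce to $\mu$ ergodic. The cosofic IRS form a closed convex set, and the hypothesis ``for $\mu$-a.e.\ $H$'' passes to $\mu$-almost every ergodic component in the ergodic decomposition. A barycentre argument, approximating $\mu$ by finite convex combinations of ergodic components and using that a finite convex combination of finite-index IRS is again finite-index (after rational rescaling), then recovers $\mu$ itself. For ergodic $\mu$, apply the von Neumann mean ergodic theorem for amenable group actions along a Følner sequence to the Koopman representation on $L^2(\Sub(\Gamma),\mu)$. For each cylinder $C$ this gives
$$\frac{1}{\lvert F_n\rvert}\sum_{f\in F_n}1_C(fHf^{-1}) \to \mu(C)$$
in $L^2(\mu)$. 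There are countably many cylinders, so a diagonal subsequence yields $\mu$-a.e.\ convergence for all $C$ simultaneously. Intersecting this full-measure set with the full-measure set where the hypotheses hold gives a generic $H$. Then $\lambda_{H,n}\to\mu$ along the subsequence, so $\nu_n\to\mu$ and $\mu$ is cosofic.

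\textbf{Main obstacle.} I expect the main difficulty to lie in Step 3 and the left/right bookkeeping. The mean ergodic theorem requires the averaging sets to be asymptotically invariant on the side matching the conjugation action, whereas $F_n$ is only right-Følner. I would first try to handle this by passing to $F_n^{-1}$, which is left-Følner, and rewriting $p_n$ accordingly. The ergodic-decomposition reduction is the other delicate point, but it is standard.
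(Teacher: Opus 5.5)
Your proof is correct. It is essentially the standard generic-point argument behind this result. The paper does not prove the theorem itself; it imports it from Levit--Lubotzky \cite{LevLub} (Proposition 3.3 and Theorem 3.4 there). That argument uses an ergodic theorem along $(F_n)$ to show that F\o lner averages of conjugates of a $\mu$-generic $H$ converge to $\mu$, and hypotheses (i) and (ii) then transfer this convergence to the finite-index IRS built from the $K_n$.

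The left/right issue you flag disappears if you average $\delta_{f^{-1}Hf}$ and $\delta_{f^{-1}K_nf}$ over $f \in F_n$:
\begin{itemize}
\item[(a)] these averages are compared exactly by $p_n$;
\item[(b)] the mean ergodic theorem for them needs exactly the right-F\o lner condition $\lvert F_n s \setminus F_n \rvert/\lvert F_n \rvert \to 0$ that the paper assumes;
\item[(c)] hypothesis (i) should then be read with right cosets $N_{\Gamma}(K_n)f$.
\end{itemize}
Reading (i) this way is harmless in the paper's application, where $F_n$ is a transversal to a normal subgroup $N_n \leq K_n$.
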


Now let $G = G(d,r)$ be as in Subsection \ref{NeumannBasicSubsect}, 
with $d,r : \mathbb{N} \rightarrow \mathbb{N}$ 
satisfying the conditions of Theorem \ref{MainStabilityThm}. 

\begin{thm} \label{BasicIRSThm}
Let $\mu \in \IRS (G)$ be an ergodic IRS and suppose that 
$\supp (\mu) \subseteq \Sub( L_{\infty})$. Then $\mu$ is cosofic. 
\end{thm}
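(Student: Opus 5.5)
We apply Theorem \ref{WeissThm} with the F\o lner sequence $(F_n)$ of Proposition \ref{NeumFolnerProp}. For $\mu$-almost every $H \leq L_{\infty}$ we define
$$K_n = \phi_n^{-1}\big(\phi_n(H \cap G_n L_{m_n})\big),$$
where $\phi_n : G \rightarrow P_n$ is the homomorphism of Lemma \ref{FutureHomLem}. Since $P_n$ is finite, $K_n$ has finite index in $G$. The idea is that $G_n L_{m_n}$ is a finite subgroup which, by Lemma \ref{FutureHomLem}, $\phi_n$ maps bijectively onto $G_n \times B(W_{m_n})$. So $K_n$ is a finite-index subgroup which, when restricted to $G_n L_{m_n}$, recovers exactly $H \cap G_n L_{m_n}$. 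One checks this using injectivity of $\phi_n$ on $G_n L_{m_n}$ and the fact that $\phi_n(G_nL_{m_n}) \cap \phi_n(\text{other cosets } G_nL_{m_n}\alpha^j)$ is empty for $0<\lvert j\rvert \le m_n$.

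\textbf{Condition (i).} We have $\ker(\phi_n) \leq K_n$, so $K_n$ contains the normal subgroup $\ker\phi_n$, and therefore $N_G(K_n) \supseteq \ker \phi_n$. By Lemma \ref{FutureHomLem}, $F_n$ is a transversal to $\ker\phi_n$. Lemma \ref{EasyTransLem} then shows that $F_n$ is a finite-to-one transversal to $N_G(K_n)$.

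\textbf{Condition (ii).} Fix $g \in G$. If $g \notin L_\infty$, then $fgf^{-1}\notin L_\infty$ for every $f$, so $fgf^{-1} \notin H$. We also need $fgf^{-1}\notin K_n$. Write $\tau(g)=b a^k$ with $k\neq 0$. Then $\rho_{m_n}\tau(fgf^{-1})$ has nonzero image in $\mathbb{Z}/(2m_n+1)$ once $2m_n+1>\lvert k\rvert$. Since $\phi_n(K_n) \subseteq G_n\times B(W_{m_n})$, it follows that $fgf^{-1}\notin K_n$, so $p_n(g)=0$ eventually.

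If $g \in L_\infty$, Lemma \ref{FolnerConjLem} shows that for all but a vanishing proportion of $f \in F_n$ we have $fgf^{-1} \in G_nL_{m_n}$. For such $f$, $fgf^{-1}\in H$ if and only if $fgf^{-1} \in H\cap G_nL_{m_n}$, which holds if and only if $fgf^{-1}\in K_n$, by the injectivity remark above. Here we must verify that $\phi_n^{-1}(\phi_n(H\cap G_nL_{m_n})) \cap G_nL_{m_n} = H\cap G_nL_{m_n}$. This follows because $\phi_n|_{G_nL_{m_n}}$ is injective and $\phi_n^{-1}(G_n \times B(W_{m_n}))\cap F_n = G_nL_{m_n}$. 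Hence $p_n(g)\le 1-\lvert E_n(g)\rvert/\lvert F_n\rvert \to 0$.

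The main obstacle I expect is the injectivity step in Lemma \ref{FutureHomLem} and its use to identify $K_n \cap G_nL_{m_n}$. The content here is that $\phi_n$ restricted to the finite subgroup $G_nL_{m_n}$ is injective, which was proved via Proposition \ref{3CycProp}. A more serious concern is whether $K_n$ as defined is actually a subgroup. It is, because $\phi_n(H\cap G_nL_{m_n})$ is a subgroup of $P_n$, being the image of a subgroup under a homomorphism. So the preimage is a subgroup containing $\ker\phi_n$, and everything goes through. Ergodicity of $\mu$ is not actually needed; the argument works for each $H\leq L_\infty$ individually.
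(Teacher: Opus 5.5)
Your proposal is correct and is essentially the paper's own proof. Your $K_n=\phi_n^{-1}\big(\phi_n(H\cap G_nL_{m_n})\big)$ is exactly the paper's $(H\cap G_nL_{m_n})\ker(\phi_n)$, condition (i) is verified the same way via Lemma \ref{EasyTransLem}, and condition (ii) uses the same case split, where your injectivity of $\phi_n$ on $G_nL_{m_n}$ is the paper's $(G_nL_{m_n})\cap\ker(\phi_n)=\lbrace e\rbrace$. On the injectivity step you flagged: both arguments inherit from Lemma \ref{FutureHomLem} the unstated requirement that $(m_n)$ be chosen with $2m_n+1\le r(n)$, which is what makes Proposition \ref{3CycProp} apply to every $k\ge n$.
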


\begin{proof}
Suppose each coset of $K$ in $\Gamma$ contains $n$ elements of $T$. 
Then each coset of $H$ in $\Gamma$ contains $n \lvert H:K \rvert$ 
elements of $T$, since every coset of $H$ in $\Gamma$ 
is a union of $\lvert H:K \rvert$ cosets of $K$ in $\Gamma$. 
\end{proof}

\begin{proof}[Proof of Theorem \ref{BasicIRSThm}]
We shall use the criterion from Theorem \ref{WeissThm}. 
Let $(m_n)$ be an increasing sequence of positive integers. 
Let $(F_n)$ be as in Section \ref{NeumannSection}, 
so that, by Proposition \ref{NeumFolnerProp}, 
$(F_n)$ is a F\o lner sequence for $G$. 
Let $H \leq L_{\infty}$ and let $\phi_n : G \rightarrow G_n \times W_{m_n}$ 
be as in Lemma \ref{FutureHomLem}, 
define $N_n = \ker (\phi_{m_n})$ 
(a finite-index normal subgroup of $G$) and set 
$K_n = (H \cap (G_n L_{m_n})) N_n \leq G$, 
so that $K_n$ is a finite-index subgroup of $G$. 
We verify the conditions of Theorem \ref{WeissThm}. 

For hypothesis (i), 
recall from Lemma \ref{FutureHomLem} that $F_n$ 
is a transversal to $N_n$ in $G$. 
We have $N_n \leq K_n \leq N_G (K_n)$, 
so b Lemma \ref{EasyTransLem}, 
$F_n$ is a finite-to-one transversal to $N_G (K_n)$ in $G$. 

For hypothesis (ii) we divide into two cases. 
First suppose that $g \notin L_{\infty}$. 
Let $0 \neq k \in \mathbb{Z}$ be such that 
$g L_{\infty} = \alpha^k L_{\infty}$. 
Then $(fgf^{-1}) L_{\infty} = \alpha^k L_{\infty}$ also 
(since $G/ L_{\infty} \cong \mathbb{Z}$ is abelian). 
Thus: $$(\rho_{m_n} \circ \tau)(fgf^{-1}) \in a_{(m_n)} ^k B(W_{m_n}) \neq B(W_{m_n})$$ 
for all $m_n > k$. 
On the other hand, if $fgf^{-1} \in H \smalltriangleup K_n$ then, 
since $fgf^{-1} \notin L_{\infty} \geq H$, we have $fgf^{-1} \in K_n$. 
But $\rho_{m_n} (K_n) \leq B(W_{m_n})$ for all $n$. 
Hence for all $m_n > k$ we have $fgf^{-1} \notin H \smalltriangleup K_n$ 
for all $f \in F_n$, that is $p_n(g) = 0$ for all $n$ sufficiently large. 

Alternatively, suppose $g \in L_{\infty}$. 
Let $E_n (g) = \lbrace f \in F_n : fgf^{-1} \in G_n L_{m_n} \rbrace$. 
Then we have: 
\begin{equation} \label{GoodFsEqn}
\frac{\lvert E_n \rvert}{\lvert F_n \rvert} \rightarrow 1 \text{ as } 
n \rightarrow \infty
\end{equation}
by Lemma \ref{FolnerConjLem}. 
For $f \in E_n (g)$, we have $fgf^{-1} \notin H \setminus K_n$ 
since: $$(G_n L_{m_n}) \cap H \leq (G_n L_{m_n}) \cap K_n.$$
On the other hand, if $fgf^{-1} \in K_n$ then, 
writing $fgf^{-1} = hd$ for some $h \in (G_n L_{m_n}) \cap H$ and $d \in N_n$, 
we must have $d \in G_n L_{m_n}$ also (since $fgf^{-1} \in G_n L_{m_n}$). 
But from Lemma \ref{FutureHomLem}, $(G_n L_{m_n}) \cap N_n = \lbrace e \rbrace$ 
so $fgf^{-1} = h \in (G_n L_{m_n}) \cap H$. 
Thus $fgf^{-1} \notin K_n \setminus H$ also. Hence: 
\begin{equation*}
p_n(g) \leq 1 - \frac{\lvert E_n \rvert}{\lvert F_{m_n} \rvert} \rightarrow 0 
\text{ as } n\rightarrow \infty \text{ (by (\ref{GoodFsEqn})). }
\end{equation*}
Since this holds for every subgroup $H$ of $L_{\infty}$, 
the desired result now follows from Theorem \ref{WeissThm}. 
\end{proof}

We now turn our attention to ergodic IRS of $G$ 
which are not supported on $L_{\infty}$. 
It transpires that these IRS are not just cosofic, 
they are actually finite-index. 
Recall that a subgroup $H$ of a group $\Gamma$ 
is \emph{almost-normal} 
if $N_{\Gamma}(H)$ has finite index in $\Gamma$. 
Note that if $\mu \in \IRS (\Gamma)$ and 
$H \leq \Gamma$ is an \emph{atom} of $\mu$, 
meaning that $\mu (\lbrace H \rbrace) > 0$, 
then since: 
\begin{center}
$\mu (\lbrace H \rbrace) = g_{\ast}\mu (\lbrace H \rbrace) = \mu (\lbrace H^{g^{-1}} \rbrace)$
\end{center}
we must have that $H$ is almost-normal in $\Gamma$. 

Let $B(W) = \bigoplus_{\mathbb{Z}} C_3 \vartriangleleft W = C_3 \wr \mathbb{Z}$ be as in the preceding section. 

\begin{thm}[Lemma 2.2 of \cite{BowGrigKrav}] \label{BowGrigKravThm}
There are only countably many subgroups 
of $W$ not contained in $B(W)$. 
\end{thm}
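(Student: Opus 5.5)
The plan is to exploit the fact that a subgroup $H \leq W$ with $H \nsubseteq B(W)$ is determined by very little data. First, $\tau_0 : W \to \mathbb{Z}$, the projection with kernel $B(W)$, maps $H$ onto a nontrivial subgroup $k\mathbb{Z}$ with $k \geq 1$. Choose $h = a^k u \in H$ with $u \in B(W)$. Put $H_0 = H \cap B(W)$. Then $H = H_0 \langle h \rangle$, so $H$ is determined by the pair $(h, H_0)$. The element $h$ ranges over the countable group $W$, so it suffices to show that, for fixed $h$, only countably many $H_0$ occur.

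The key structural step is that $H_0$ is normalised by $h$. Conjugation by $h$ acts on $B(W)$ as conjugation by $a^k$, because $B(W)$ is abelian. So $H_0$ is a subspace of the $\mathbb{F}_3$-vector space $B(W) \cong \mathbb{F}_3[t,t^{-1}]$ which is invariant under multiplication by $t^k$. In other words, $H_0$ is an $\mathbb{F}_3[t^k,t^{-k}]$-submodule of $\mathbb{F}_3[t,t^{-1}]$.

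Now $\mathbb{F}_3[t,t^{-1}]$ is a free module of rank $k$ over the ring $R_k = \mathbb{F}_3[s,s^{-1}]$, where $s = t^k$, with basis $1, t, \ldots, t^{k-1}$. The ring $R_k$ is a localisation of a polynomial ring, hence Noetherian. Therefore every submodule of $R_k^k$ is finitely generated. Since $R_k^k$ is countable, it has only countably many finite subsets, and hence only countably many submodules. Thus for each $k$ (indeed for each $h$) there are countably many possible $H_0$.

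Taking the union over the countably many choices of $h$, or of $k$ together with a coset representative, gives countably many subgroups of $W$ not contained in $B(W)$. The only point requiring care is the Noetherian step, namely the identification of $B(W)$ as a finitely generated module over a Noetherian ring. I expect this to be the main (mild) obstacle, and it is handled by the Hilbert basis theorem together with the fact that localisation preserves the Noetherian property.
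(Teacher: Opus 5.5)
Your proof is correct. The paper gives no argument of its own for this statement; it imports it as Lemma 2.2 of Bowen, Grigorchuk and Kravchenko. Your reduction to $\mathbb{F}_3[t^k,t^{-k}]$-submodules of the free module $\mathbb{F}_3[t,t^{-1}]$ is therefore a complete self-contained proof of what the paper outsources, not an alternative to some other route, and every step checks: $H=H_0\langle h\rangle$, $hH_0h^{-1}=H_0$ with $h$ acting as $t^k$ because $B(W)$ is abelian, and the Noetherian counting.

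Your argument also gives something slightly stronger for free. The $R_k$-span of $v_1,\dots,v_m$ is exactly the subgroup generated by the conjugates $h^jv_ih^{-j}$, so every $H\nsubseteq B(W)$ is in fact finitely generated, by $h$ together with finitely many elements of $H_0$. Since $R_k=\mathbb{F}_3[s,s^{-1}]$ is a PID, at most $k+1$ generators suffice. Countability then follows because a countable group has only countably many finitely generated subgroups.
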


\begin{propn} \label{WreathANProp}
If $H \leq W$ is almost-normal and $H \nsubseteq B(W)$, 
then $\lvert W : H \rvert < \infty$. 
\end{propn}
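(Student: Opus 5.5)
Since $H$ is almost-normal, $N = N_W(H)$ has finite index in $W$. The plan is to exhibit inside $H$ an element whose conjugates under $N$, together with that element, generate a subgroup of finite index in $W$.

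First, pick $h \in H \setminus B(W)$ and write $h = b a^k$ with $b \in B(W)$ and $k \neq 0$. Since $N$ has finite index, $N \cap B(W)$ has finite index in $B(W)$. Let $M \subseteq B(W)$ be the subgroup of elements of $N \cap B(W)$ that are moreover fixed setwise under enough translation; concretely, we use only that $N\cap B(W)$ has finite index in $B(W)$.

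The key step is to take commutators. For $c \in N \cap B(W)$ we have $c h c^{-1} \in H$ (as $c$ normalises $H$), so $[c,h] = c\,(h c^{-1} h^{-1}) \in H$. Since $B(W)$ is abelian, $h c^{-1} h^{-1} = a^k c^{-1} a^{-k}$, so in additive notation, viewing $B(W) \cong \mathbb{F}_3[t,t^{-1}]$ with $a$ acting by multiplication by $t$, we get $[c,h] = (1-t^k)c$. Hence $H \supseteq (1-t^k)\,(N\cap B(W))$. Now $N \cap B(W)$ is a finite-index $\mathbb{F}_3$-subspace of $B(W)$ that is also invariant under $t^{j}$ for some $j\neq 0$ (because $N \not\subseteq B(W)$, as $N$ has finite index in $W$, and conjugation by an element $b'a^{j}\in N$ acts on the abelian group $B(W)$ as $t^j$). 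We then check that $(1-t^k)$ maps a finite-index subgroup of $B(W)$ to a finite-index subgroup. Multiplication by $1-t^k$ is injective on $\mathbb{F}_3[t^{\pm1}]$ with image the ideal $(1-t^k)$, which has finite index $3^{|k|}$, and a finite-index subgroup $V$ has image of index at most $|B(W):V|\cdot 3^{|k|}$. Therefore $H \cap B(W)$ has finite index in $B(W)$. (We never actually need the $t^j$-invariance; the injectivity plus the finite-codimension count suffices.)

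Finally, $H$ contains $H\cap B(W)$, of finite index in $B(W)$, and the element $h$ whose image in $W/B(W)\cong\mathbb{Z}$ is $k\neq0$. Hence $|W:H| \le |W : (H\cap B(W))\langle h\rangle| = |k|\cdot|B(W):H\cap B(W)| < \infty$. The main point to verify carefully is the index estimate for the image under $1-t^k$. It is elementary: $B(W)/(1-t^k)V$ surjects onto $B(W)/(1-t^k)B(W)$, which has order $3^{|k|}$, with kernel $(1-t^k)B(W)/(1-t^k)V \cong B(W)/V$ by injectivity. This route does not use Theorem \ref{BowGrigKravThm}.
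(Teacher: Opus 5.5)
Your proof is correct and follows essentially the same route as the paper. Both arguments commute an element $va^k\in H$ with elements of the finite-index subgroup $N_{B(W)}(H)$ to place $(1-t^k)\,N_{B(W)}(H)$ inside $H\cap B(W)$, and then conclude that this intersection, and hence $H$, has finite index.

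The difference is only bookkeeping. You use injectivity of multiplication by $1-t^k$ on $\mathbb{F}_3[t^{\pm1}]$ and the index count $3^{|k|}$. The paper instead absorbs coset representatives into a finite subgroup $F$ and uses that the elements $b_{j+k}b_j^{-1}$ together with $b_1,\dots,b_k$ generate $B(W)$.

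Your stray remarks about the subgroup $M$ and the $t^j$-invariance are never used and can simply be deleted.
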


\begin{proof}
Suppose $N_W (H)$ has index $A$ in $W$. 
Then $\lvert B(W) : N_{B(W)} (H) \rvert \leq A$, 
so we may take a set $v_1 , \ldots , v_A \in B(W)$ 
of coset representatives to $N_{B(W)} (H)$ in $B(W)$. 
Thus there exists $C > 0$ such that 
$v_i \in \langle b_j : \lvert j \rvert \leq C \rangle$ for $1 \leq i \leq A$. 

There exists a positive integer $k$ and $v,w \in B(W)$ 
such that $x = v a^k = a^k w \in H$. 
Let $F = \langle b_j : \lvert j \rvert \leq C+k \rangle$, 
a finite subgroup of $B(W)$. 
For any $j \in \mathbb{Z}$, there exists $1 \leq i(j) \leq A$ 
such that $b_j v_{i(j)} ^{-1} \in N_{B(W)} (H)$. 
We therefore have: 
\begin{center}
$x^{-1} (b_j v_{i(j)} ^{-1}) x (b_j v_{i(j)} ^{-1})^{-1} 
= (a^{-k} b_j a^k b_j ^{-1})(a^{-k} v_{i(j)} ^{-1} a^k v_{i(j)}) 
\in H$
\end{center}
and since $a^{-k} v_{i(j)} ^{-1} a^k v_{i(j)}\in F$ 
we have: 
$$b_{j+k} b_j ^{-1} = a^{-k} b_i a^k b_i ^{-1} \in (H \cap B(W))F 
\text{ for all $j \in \mathbb{Z}$.}$$
It is easy to see that these elements, together with $b_j \in F$ 
($1 \leq j \leq k$), generate $B(W)$. 
Since $(H \cap B(W)),F \leq B(W)$, an abelian group, we have $(H \cap B(W))F \leq B(W)$. 
Thus, in fact, $(H \cap B(W))F = B(W)$, 
and we conclude that the finite set 
$\lbrace f a^i : f \in F , 0 \leq i \leq k-1 \rbrace$ 
contains a set of coset representatives to $H$ in $W$. 
\end{proof}


Our next observation, and its proof, 
are closely analogous to \cite{LevLub} Proposition 2.5. 

\begin{propn} \label{WreathLiftProp}
Let $H \leq G_d$. 
Suppose that $\lvert W : \tau(H) \rvert < \infty$. 
Then $\lvert G_d : H \rvert < \infty$. 
\end{propn}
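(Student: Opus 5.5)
The strategy is to bound $\lvert G_d : H \rvert$ by $\lvert W : \tau(H)\rvert \cdot \lvert \ker(\tau) : H \cap \ker(\tau)\rvert$, where $G_d = G(d,r)$. Since $\lvert G_d : H\ker(\tau)\rvert = \lvert W : \tau(H)\rvert < \infty$, it suffices to show that $H \cap \ker(\tau)$ has finite index in $\ker(\tau) = \bigoplus_n \Alt(d(n))$ (Proposition \ref{TailHomProp}). As $G_{n_0} = \bigoplus_{k<n_0}\Alt(d(k))$ is finite, it is enough to find $n_0$ such that $\Alt(d(n)) \leq H$ for every $n \geq n_0$. The model is the inductive argument in the proof of Proposition \ref{TailHomProp} and \cite{LevLub} Proposition 2.5, now run inside $H$ rather than inside $L_{\infty}$.

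\textbf{Step 1: useful elements of $H$.} Since $\tau(H)$ has finite index in $W$, it contains an element $a^k v$ with $k \neq 0$ and $v \in B(W)$. Also $\tau(H) \cap B(W)$ has finite index in $B(W)$, so it contains many nontrivial elements of $\langle b_j : \lvert j \rvert \leq m\rangle$. Lift these to $h \in H$ with $\tau(h) = a^k v$ and to elements $g \in H \cap L_m$ with $\tau(g) \neq e$; the latter is possible after multiplying by elements of $\ker(\tau) \leq L_{\infty}$, which we can absorb into the finite-index statement. By Proposition \ref{3CycProp}, for $n$ large $\pi_n(g)$ is a product of disjoint $3$-cycles in a window of size $O(m)$, with exactly the combinatorics of $\tau(g)$. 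Moreover $\pi_n(h)$ acts like $\alpha_n^k$ away from a bounded set.

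\textbf{Step 2: $\pi_n(H) = \Alt(d(n))$ for all large $n$.} This is the step I expect to be the main obstacle. I would first try to show that $\pi_n(H)$ is primitive and contains a $3$-cycle, and then invoke Jordan's Theorem exactly as in Lemma \ref{PiSurjLem}. For the $3$-cycle, I would conjugate $g$ by suitable powers of $h$ and take commutators or products, so as to isolate a single $3$-cycle. For primitivity, I would use that $d(n)$ is prime: a high power of $\pi_n(h)$ should be a $d(n)$-cycle or close to one. Combining the finitely many generators available, via Proposition \ref{AltWreathLocalProp}, should mimic $\langle \alpha_n, \beta_n\rangle$ on large balls.

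\textbf{Step 3: elements of $H \cap \ker(\tau)$ with controlled support.} I would imitate $x_m = [\alpha^{r(m)}\beta\alpha^{-r(m)},\beta]$. Take $y_n = [h^{t}gh^{-t}, g] \in H$, with $t$ chosen so that the conjugated window is far away in $W$ (so $\tau(y_n) = e$). The point is that in coordinate $n$ the "wrap-around" modulo $d(n)$ makes the supports overlap, because $r$ is strictly increasing and $r(n) < d(n)/3$. This gives $\pi_n(y_n) \neq e$ while $\pi_{n'}(y_n) = e$ for all $n' > n$.

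\textbf{Step 4: induction.} Let $Y_n$ be the normal closure of $y_n$ in $H$. Then $Y_n \leq \bigoplus_{k \leq n}\Alt(d(k))$. By Step 2 and simplicity, $\pi_n(Y_n) = \Alt(d(n))$. Working modulo $G_{n_0}$ and inducting on $n \geq n_0$ as in the proof of Proposition \ref{TailHomProp}, we get $\bigoplus_{n_0 \leq k \leq n}\Alt(d(k)) \leq H\,G_{n_0}$ for all $n$. Hence $H \cap \ker(\tau)$ has index at most $\lvert G_{n_0}\rvert$ in $\ker(\tau)$, which gives the result.
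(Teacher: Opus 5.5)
Your reduction to $\lvert \ker(\tau):H\cap\ker(\tau)\rvert<\infty$ and the bookkeeping in Step 4 are fine. The gap is Step 3, on which your whole route rests: it is asserted rather than proved, and as described it does not give $\pi_{n'}(y_n)=e$ for all $n'>n$.

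\textbf{Step 3.} Inside $H$ you lack both ingredients of $x_m=[\alpha^{r(m)}\beta\alpha^{-r(m)},\beta]$, namely an exact shift by $r(m)$ and a single $3$-cycle.
\begin{itemize}
\item Shifts are only by multiples $s=kt$ of $k$.
\item For $n'\geq N$, $\pi_{n'}(g)$ is a product of several disjoint $3$-cycles $c_j=(1+j,\,1+j+r(n'),\,1+j+2r(n'))^{\pm1}$, $j\in J$, where $J$ is the support of $\tau(g)$. Controlling $\pi_{n'}(h)$ likewise requires $h$ to lift $a^k$ itself, not $a^kv$.
\end{itemize}
The overlap in coordinate $n$ does not come from wrap-around. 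It comes from $s$ being close to the spacing $r(n)$: conjugation by $\alpha_{n'}^{s}$ carries $c_i$ onto a cycle sharing two points with $c_{i'}$ whenever $s-r(n')=i'-i$.

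Since $r$ is only assumed strictly increasing, $r(n+1)=r(n)+1$ is allowed. A shift producing an alignment in coordinate $n$ then generally produces one in coordinate $n+1$ as well. For instance, take $k=1$, $\tau(g)=b_0b_1^{-1}$ and $s=r(n)$. In coordinate $n+1$ the shifted $c_1$ shares two points with $c_0$, so $\pi_{n+1}(y_n)\neq e$. Hence $Y_n\not\leq\bigoplus_{k\leq n}\Alt(d(k))$ and the induction breaks. You also never address that $k\mid s$ while $r(n)$ need not be divisible by $k$.

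One way to repair this is to use the \emph{extreme} difference, taking $s=r(n)-\mathrm{diam}(J)$.
\begin{itemize}
\item In coordinate $n$ only the pair $(\max J,\min J)$ aligns, giving the commutator of two $3$-cycles sharing two points, which is nontrivial.
\item For $n'>n$ we have $s-r(n')<-\mathrm{diam}(J)$, which rules out every alignment (using $3r(n')\leq d(n')$ and primality of $d$).
\item To get $k\mid s$, choose $g$ with $\mathrm{diam}(J)\equiv r(n)\pmod k$. This is possible because $\tau(H)\cap B(W)$ has finite index and is normalised by $a^k$. It therefore contains $b_ib_{i+K}^{-1}$ for all $i$ and some multiple $K$ of $k$, hence elements with support $\{0,j,K,K+j\}$.
\end{itemize}

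\textbf{Step 2.} This is also only a plan. You never produce a $3$-cycle, and ``a high power of $\pi_n(h)$ is close to a $d(n)$-cycle'' does not give primitivity. The paper handles this directly:
\begin{itemize}
\item Lifting $a^k$ gives $\alpha_n^k\in\pi_n(H)$, a cycle of prime length $d(n)$, so $\pi_n(H)$ is primitive.
\item $\pi_n(H)$ contains $\pi_n(\tilde v)\neq e$ of support at most $6m+3$ (Lemma \ref{LSmallSuppLem}).
\item Babai's bound (Theorem \ref{BabaiThm}) then yields $\pi_n(H)=\Alt(d(n))$ for large $n$, with no $3$-cycle needed.
\end{itemize}

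\textbf{How the paper avoids Step 3.} The paper never constructs elements with controlled coordinate support. It passes to $H_0=H\cap\ker(\pi_1\times\cdots\times\pi_{n_1})$ and uses surjectivity of $H_0$ onto the finite products $\prod_{n_1<n\leq n_2}\Alt(d(n))$. This shows $H_0\cap\ker(\tau)$ is normal in $\ker(\tau)$, hence equal to $\bigoplus_{n\in S}\Alt(d(n))$. Solubility of $H_0/(H_0\cap\ker(\tau))\leq W$, played against Lemma \ref{WreathLiftLem}(ii), then forces $S$ to be cofinite. This structural argument sidesteps exactly the combinatorics where your Step 3 is fragile.

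\textbf{Minor point.} In Step 1 you cannot arrange $g\in H\cap L_m$. You only get $g\in H$ agreeing with some $\tilde v\in L_m$ in all coordinates $n'\geq N$, which is all you actually use.
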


To prove Proposition \ref{WreathLiftProp} 
we shall require the 
following fact from the theory of permutation groups, 
originally due to Babai (see Theorems 5.3A and 5.4A of \cite{DixMor}). 

\begin{thm} \label{BabaiThm}
Let $\Omega$ be a finite set, 
and let $G \leq \Sym(\Omega)$ be a primitive subgroup. 
Suppose there exists $e \neq g \in G$ with 
$\supp(g) < \sqrt{\lvert \Omega \rvert}/2$. 
Then $\Alt(\Omega) \leq G$. 
\end{thm}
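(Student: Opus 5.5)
Theorem \ref{BabaiThm} is quoted from \cite{DixMor}, but I would prove it by the classical ``minimal degree'' route: produce a $3$-cycle in $G$ and then apply Jordan's Theorem exactly as in the proof of Lemma \ref{PiSurjLem}. Set $n = \lvert \Omega \rvert$. Among the nontrivial elements of $G$ with $\supp < \sqrt{n}/2$, fix one, $g$, whose support $\Delta$ has minimal size $m$. The case $m = 2$ is harmless: then $G$ is primitive and contains a transposition, so $G = \Sym(\Omega)$ by the transposition form of Jordan's Theorem. So assume $m \geq 3$.

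\textbf{Step 1 (commutators shrink support).} For $x \in G$, write $g^x = x g x^{-1}$, whose support is $\Delta^x := x(\Delta)$.
\begin{itemize}
\item If $\Delta \cap \Delta^x = \emptyset$, then $g$ and $g^x$ commute.
\item In general, $c = [g, g^x]$ moves only points of $(\Delta \cap \Delta^x) \cup g^{-1}(\Delta \cap \Delta^x) \cup (g^x)^{-1}(\Delta\cap\Delta^x)$, so $\supp(c) \leq 3k$ where $k = \lvert \Delta \cap \Delta^x \rvert$.
\item If $k = 1$, then $c$ is nontrivial and is in fact a $3$-cycle. This is the standard computation: two permutations whose supports meet in one point have a commutator that is a $3$-cycle.
\end{itemize}
So it suffices to find $x \in G$ with $\lvert \Delta \cap \Delta^x \rvert = 1$. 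When $1 < k < m/3$ and $c \neq e$, minimality of $m$ gives a contradiction, since $\supp(c) \leq 3k < m$. I therefore aim to show: either some translate meets $\Delta$ in exactly one point, or some translate meets it in $k$ points with $1 < k < m/3$ and the commutator is nontrivial. The latter is again a contradiction.

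\textbf{Step 2 (counting via primitivity).} This is the main obstacle. Suppose, for contradiction, that every translate $\Delta^x$ meeting $\Delta$ does so in at least $m/3$ points. I will use primitivity in the following form: for distinct $\alpha, \beta$ there is a translate of $\Delta$ containing $\alpha$ but not $\beta$. Otherwise the relation ``every translate containing $\alpha$ contains $\beta$'' generates a nontrivial $G$-invariant block system. (One also needs to rule out the symmetric, equivalence-relation case, which would give nontrivial blocks.)

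Fix $\alpha \in \Delta$. Consider the family $\mathcal{T}$ of distinct translates of $\Delta$ through $\alpha$. Each member of $\mathcal{T}$ meets $\Delta$ in at least $m/3$ points. The union $U$ of $\mathcal{T}$ is $G_\alpha$-invariant. By primitivity, the graph on $\Omega$ joining two points when they lie in a common translate is connected, and is in fact of bounded diameter in terms of $m$. A double count of incidences (point, translate) between $\Delta$ and $\mathcal{T}$, together with the $\geq m/3$ overlap condition, should bound $\lvert U \rvert$ by roughly $3m$. Propagating this along the connected graph should then force $n \leq 4m^2$. That contradicts $m < \sqrt{n}/2$.

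I expect the delicate part of this step to be getting the constant $1/2$ exactly. If the crude count only gives $n \leq Cm^2$ for some larger $C$, I would refine the count of pairs $(\beta, \Delta^x)$ with $\beta \in \Delta \cap \Delta^x$, as in Dixon--Mortimer 3.3D. If that still fails to give the precise constant, I would cite that source for it.

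\textbf{Step 3 (conclusion).} We obtain $x$ with $\lvert \Delta \cap \Delta^x \rvert = 1$, so $G$ contains a $3$-cycle. Since $G$ is primitive and $3 \leq n - 2$ (note $n \geq 16 m^2 > 5$), Jordan's Theorem gives $\Alt(\Omega) \leq G$.
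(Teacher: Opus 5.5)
\textbf{There is a genuine gap: Step~2 is the whole theorem, and it is not carried out.} Your Steps~1 and~3 are sound. The commutator of two permutations whose supports meet in one point is a $3$-cycle, $\supp([g,g^x])\leq 3\lvert\Delta\cap\Delta^x\rvert$, and Jordan's theorem finishes.

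The case analysis feeding into Step~2 is not exhaustive. The hypothesis you try to refute, ``every translate meeting $\Delta$ meets it in at least $m/3$ points'', is not the negation of what you need. A translate with $1<\lvert\Delta\cap\Delta^x\rvert<m/3$ may commute with $g$. Then $[g,g^x]=e$ gives neither a $3$-cycle nor a contradiction with the minimality of $m$. This configuration genuinely occurs in primitive groups:
\begin{itemize}
\item Take $\Sym(k)\wr\Sym(2)$ in product action on $\lbrace 1,\ldots,k\rbrace^2$ with $k>6$.
\item The element $g=(1\;2)\times\id$ has minimal support, with $m=2k$.
\item Its conjugate $\id\times(1\;2)$ meets $\supp(g)$ in $4<m/3$ points and commutes with $g$.
\end{itemize}
Here $n=m^2/4$, so there is no conflict with the theorem. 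Still, your argument must exclude such translates when $n>4m^2$, and nothing in Step~2 does. To force non-commutation you would have to restrict to translates with $\alpha\in\Delta^x$ and $g(\alpha)\notin\Delta^x$; this works because anything commuting with $g^x$ preserves $\supp(g^x)$. You would then need to show that one of \emph{those} translates has overlap exactly $1$, or overlap below $m/3$. That is the real content, and it is missing.

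The quantitative sketch is also unsupported.
\begin{itemize}
\item Pairwise intersections of size $\geq m/3$ among $m$-sets through $\alpha$ do not by themselves bound their union. For example, take all $m$-sets containing a fixed set of size $\lceil m/3\rceil$. So $\lvert U\rvert\leq 3m$ would need an essential use of the group that you do not indicate.
\item Even granting $\lvert U\rvert\leq 3m$, a connected graph of valency at most $3m$ and diameter $D$ only gives $n\leq\sum_{i=0}^{D}(3m)^i$. That is already about $9m^2$ for $D=2$, so ``propagating'' cannot yield $n\leq 4m^2$. The bounded-diameter claim is itself unproved.
\end{itemize}
There is a reason to expect difficulty. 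Elementary arguments about minimal-support elements and their conjugates classically give only Jordan-type bounds with an extra logarithm, $n\ll m^2\log m$. A bound of the form $m\gg\sqrt n$ is Babai's theorem, which the paper simply quotes from Dixon--Mortimer, together with a separate bound for $2$-transitive groups.

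Babai's argument never produces a $3$-cycle. An element sending $\alpha$ to $\beta$ must move every $\gamma$ for which $(\gamma,\alpha)$ and $(\gamma,\beta)$ lie in different orbitals. In a uniprimitive group one shows that every pair $\alpha\neq\beta$ is distinguished by roughly $\sqrt n/2$ such points.

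Your fallback of citing that source is exactly what the paper does, but then Steps~1--3 play no role. A minor slip: $m<\sqrt n/2$ gives $n>4m^2$, not $n\geq 16m^2$.
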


\begin{lem} \label{WreathLiftLem}
Let $H \leq G_d$. 
Suppose that $\lvert W : \tau(H) \rvert < \infty$. Then: 
\begin{itemize}
\item[(i)] There exists $n_1 \in \mathbb{N}$ such that for all $n > n_1$, 
$\pi_n (H) = \Alt(d(n))$; 

\item[(ii)] If $S \subseteq \mathbb{N}$ is a set of integers 
which is not cofinite, then 
$$Q_S = H / H \cap \big( \bigoplus_{n \in S}\Alt(d(n)) \big) $$
is not soluble. 

\end{itemize}
\end{lem}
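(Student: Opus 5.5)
For (i) the plan is to find an element of $H$ whose image in $\Alt(d(n))$ has small support, and then apply Babai's Theorem \ref{BabaiThm} to $\pi_n(H)$, after first checking that $\pi_n(H)$ is primitive. Since $\lvert W : \tau(H) \rvert < \infty$, the subgroup $\tau(H)$ meets $B(W)$ nontrivially and contains an element $\tau(x) = v a^k$ with $k \neq 0$. Take $h \in H$ with $e \neq \tau(h) \in B(W)$, say $\tau(h) \in \langle b_j : \lvert j \rvert \leq m \rangle$. Then $h \in L_\infty$, since $\tau^{-1}(B(W)) = L_\infty$ by Proposition \ref{TailHomProp} (the kernel of $\tau$ lies in $L_\infty$). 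Moreover $h \in \ker(\tau) L_{m'}$ for some $m'$. Since $\ker(\tau)$ is a direct sum, $\pi_n(h) = \pi_n(l)$ for some $l \in L_{m'}$ with $\tau(l) = \tau(h)$, for all $n$ beyond the finitely many coordinates of the kernel part. By Lemma \ref{LSmallSuppLem}, $\supp(\pi_n(h)) \leq 6m'+3$. By Proposition \ref{3CycProp}, $\pi_n(h) \neq e$ once $r(n), d(n)-2r(n) \geq 2m'+1$. Because $d(n) \to \infty$, we get $6m'+3 < \sqrt{d(n)}/2$ for large $n$.

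For primitivity, use the element $x$. Modulo the kernel, $\pi_n(x)$ equals $\pi_n(l')\alpha_n^k$ with $l' \in L_{m''}$, for $n$ large. The plan is to show that some power of $\pi_n(x)$ is close to a $d(n)$-cycle, or at least that $\langle \pi_n(x), \pi_n(h) \rangle$ acts primitively. The concrete attempt: conjugating $\pi_n(h)$ by powers of $\pi_n(x)$ gives small-support elements whose supports translate by roughly $k$ around the cycle. Since $d(n)$ is prime and $\gcd(k, d(n)) = 1$ for large $n$, these supports spread transitively, and any block system is then forced trivial because the supports have bounded size. I expect this primitivity step to be the main obstacle. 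A more robust alternative is to argue transitivity first and then use that a block system of a group containing elements of support $\ll$ block count forces the blocks to be singletons; this would be done by comparing with how $\alpha_n^k$ acts on $\mathbb{Z}/d(n)$ away from a small perturbed region. With primitivity in hand, Theorem \ref{BabaiThm} gives $\Alt(d(n)) \leq \pi_n(H)$, which proves (i).

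For (ii), let $S$ be not cofinite, so infinitely many $n \notin S$, and by (i) we may pick such $n > n_1$. The quotient $Q_S$ surjects onto $\pi_n(H) = \Alt(d(n))$, because $\pi_n$ kills $\bigoplus_{k \in S} \Alt(d(k))$ when $n \notin S$. Since $\Alt(d(n))$ is nonabelian simple, $Q_S$ is not soluble.
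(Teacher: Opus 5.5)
Your small-support element (via $H\cap L_\infty\subseteq L_{m'}$, Lemma \ref{LSmallSuppLem} and Proposition \ref{3CycProp}) and your part (ii) are fine and agree with the paper. Part (i), however, has a genuine gap exactly where you flag it: nothing in the proposal shows that $\pi_n(H)$ is transitive, let alone primitive.

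Your element $\pi_n(x)=\pi_n(l')\alpha_n^k$ is a $d(n)$-cycle power multiplied by a permutation of bounded support. Such a product can split into several cycles; for instance $(1\;3\;2)\,\alpha_n$ has fixed points. So the heuristic that supports ``translate by roughly $k$ around the cycle'' gives no transitivity. The principle you invoke for primitivity is also false as stated. Containing nontrivial elements of bounded support does not force a block system to be trivial: $\Sym(3)\wr\Sym(c)$ with $c\geq 2$ is imprimitive and contains $3$-cycles.

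Both missing steps are elementary.

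First, you need not settle for $va^k$. Since $\lvert W:\tau(H)\rvert<\infty$, the cosets $a^i\tau(H)$, $i\in\mathbb{Z}$, cannot all be distinct, so $a^k\in\tau(H)$ for some $k\neq 0$. Lifting, some $h_1\in H$ lies in $\alpha^k\ker(\tau)$. As $\ker(\tau)=\bigoplus_n\Alt(d(n))$, this gives $\pi_n(h_1)=\alpha_n^k$ for all large $n$. Since $d(n)$ is then a prime exceeding $\lvert k\rvert$, $\alpha_n^k$ is a $d(n)$-cycle, so $\pi_n(H)$ is transitive.

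Second, a transitive permutation group of prime degree is automatically primitive, because block sizes divide the degree.

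This is precisely how the paper proceeds. With primitivity in hand, your small-support element and Theorem \ref{BabaiThm} complete (i).
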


\begin{proof}
Since $\tau(H)$ has finite index in $W$, 
there exist $e \neq v \in B(W) \cap \tau(H)$ and 
$0 \neq k \in \mathbb{Z}$ such that $a^k 
= \tau(\alpha^k) \in \tau(H)$. 
Since $\tau(L_{\infty}) = B(W)$, 
there exists $m > 0$ and $\tilde{v} \in L_m$ 
such that $\tau(\tilde{v}) = v$. 
As $v \neq e$ we have from Proposition \ref{AltWreathLocalProp} 
that for all $n$ sufficiently large, $\pi_n (\tilde{v}) \neq e$. 
Since $\ker(\tau) = \bigoplus_n \Alt(d(n))$, 
there exists $n_0 \in \mathbb{N}$ such that 
for all $n \geq n_0$: 
\begin{itemize}
\item[(i)] $d(n) > \lvert k \rvert$, 

\item[(ii)] $\alpha_n ^k \in \pi_n (H)$ and $\pi_n (\tilde{v}) \neq e$. 

\end{itemize}
By the assumption that $d(n)$ is a prime greater than $k$, 
$\alpha_n ^k \in \pi_n (H)$ is a $d(n)$-cycle, 
so that $\pi_n (H)$ is transitive, 
hence (since, once again, $d(n)$ is prime) is primitive. 
From Lemma \ref{LSmallSuppLem} we have $\supp(\pi_n (\tilde{v})) \leq 6m+3$ 
for all $n$.  
Let $n_1 \geq n_0$ with $d(n_1) > (12m+6)^2$, 
so that Theorem \ref{BabaiThm} applies 
to $g = \pi_n(\tilde{v}) \in \pi_n (H) \leq \Alt(d(n))$ for all $n \geq n_1$
and we have (i). 

For (ii), we note that by (i) there exists $n_1 \in \mathbb{N} \setminus S$ 
such that $\pi_{n_1} (H) = \Alt(d(n_1))$. 
Since $\bigoplus_{n \in S}\Alt(d(n)) \leq \ker (\pi_{n_1})$, 
we have that $Q_S$ has $\Alt(d(n_1))$ as a homomorphic image, 
hence is not soluble. 
\end{proof}

\begin{proof}[Proof of Proposition \ref{WreathLiftProp}]
Given Lemma \ref{WreathLiftLem}, we can proceed much as in the proof of 
\cite{LevLub} Proposition 2.5. 
Letting $n_1$ be as in Lemma \ref{WreathLiftLem} (i), 
and taking $H_0 \vartriangleleft H$ to be the kernel of: 
$$ (\pi_1 \times \cdots \times \pi_{n_1})|_H : H \rightarrow \Alt(d(1)) \times \cdots \times \Alt(d(n_1)),$$ 
we have $\pi_n (H_0) = \Alt(d(n))$ for all $n > n_1$, 
as a consequence of the Jordan-H\"{o}lder Theorem. 
Moreover $\tau(H_0)$ still has finite index in $W$, 
so it suffices to check that $H_0$ is of finite index in $G$. 

Next, we check that $H_0 \cap \ker(\tau) \vartriangleleft \ker(\tau)$. 
For let $h \in H_0 \cap \ker(\tau)$ and let $g \in \ker(\tau)$. 
We claim that $ghg^{-1} \in H_0 \cap \ker(\tau)$. 
By definition of $H_0$, $\pi_n (h) = e$ for $n \leq n_1$. 
On the other hand, since $h \in \ker(\tau)$, there exists $n_2 \geq n_1$ 
such that $\pi_n (h) = e$ for all $n > n_2$. 
Since $\pi_n (H_0) = \Alt(d(n))$ for $n_1 +1 \leq n \leq n_2$, 
then by a standard consequence of the Jordan-H\"{o}lder Theorem, 
we have that: 
$$ (\pi_{n_1 + 1} \times \cdots \times \pi_{n_2})|_{H_0} : H_0 \rightarrow \Alt(d(n_1 + 1)) \times \cdots \times \Alt(d(n_2))$$ 
is surjective. Consequently, there exists $\tilde{g} \in H_0$ 
such that $\pi_n (\tilde{g}) = \pi_n (g)$ for all 
$n_1 +1 \leq n \leq n_2$. 
Thus $g h g^{-1} = \tilde{g} h \tilde{g}^{-1} \in H_0$, as desired. 
From Proposition \ref{TailHomProp}, 
$$\ker(\tau) = \bigoplus_{n \in \mathbb{N}} \Alt(d(n))$$
and by simplicity of the $\Alt(d(n))$, 
any normal subgroup of $\ker(\tau)$ is a direct sum 
of some subfamily of the $\Alt(d(n))$. 
Thus there exists $S \subseteq \mathbb{N}$ such that 
$$ H_0 \cap \ker(\tau) = \bigoplus_{n \in S} \Alt(d(n)). $$
As $H_0 / H_0 \cap \ker(\tau)$ is isomorphic to 
a subgroup of $W$, it is soluble, 
hence by Lemma \ref{WreathLiftLem} (ii) (applied to $H_0$) 
we conclude that $S$ is cofinite in $\mathbb{N}$. 
Thus $\lvert \ker(\tau) : H_0 \cap \ker(\tau) \rvert , \lvert W : \tau(H_0) \rvert < \infty$, so $\lvert G : H_0 \rvert < \infty$ also.   
\end{proof}

Finally we put everything together and prove Theorem \ref{MainStabilityThm}. 

\begin{proof}
By Theorem \ref{IRSStabCritThm}, 
it suffices to prove that, 
if $\mu \in \IRS (G)$ is 
an ergodic invariant random subgroup of $G$, 
then $\mu$ is cosofic in $G$. 
If $\supp(\mu) \subseteq \Sub( L_{\infty})$, 
then Theorem \ref{BasicIRSThm} applies, 
so suppose not. 

We claim that $\mu$ is a finite-index IRS (hence cosofic). 
To this end, let $T \subseteq \Sub(G)$ 
(respectively $\overline{T} \subseteq \Sub(W)$) 
be the set of infinite-index subgroups of $G$ (respectively of $W$). 
As $G$ and $W$ are finitely-generated, $T$ and $\overline{T}$ 
are cocountable, hence Borel. 
Consider the pushforward IRS $\tau_{\ast} \mu \in \IRS(W)$. 
By Proposition \ref{WreathLiftProp}, 
$\tau$ sends each member of $T$ into $\overline{T}$, 
so $\mu(T) \leq (\tau_{\ast} \mu)(\overline{T})$, 
hence it suffices to check $(\tau_{\ast} \mu)(\overline{T}) = 0$. 

By normality of $L_{\infty}$ in $G$, 
we have that $\Sub( L_{\infty})$ is a $G$-invariant Borel subset 
of $\Sub(G)$, so by ergodicity of $\mu$, 
we have: $$\supp(\mu) \subseteq U = \Sub(G) \setminus \Sub( L_{\infty}).$$
Let $\overline{V} = \Sub(B(W)) \subseteq \Sub(W)$ 
and let $\overline{U} = \Sub(W) \setminus \Sub(B(W))$. 
Then by Proposition \ref{AltWreathLocalProp}, 
$\tau$ sends $U$ and $\Sub( L_{\infty})$ 
into $\overline{U}$ and $\overline{V}$, respectively 
(since a subgroup $H$ of $G$ lies outside $L_{\infty}$ iff it contains 
$\alpha^k$ for some $k > 0$ iff 
$\tau(H)$ contains $a^k$ for some $k > 0$ iff $\tau(H)$ lies outside $B(W)$). 
Thus $\tau_{\ast} \mu$ is supported on $\overline{U}$. 
By Theorem \ref{BowGrigKravThm}, 
$\tau_{\ast} \mu$ is atomic, hence supported on almost-normal subgroups 
in $\overline{U}$. 
By Proposition \ref{WreathANProp}, 
these all have finite index, 
hence $(\tau_{\ast}\mu)(\overline{T})=0$, 
as desired. 
\end{proof}

\section{Further questions} 

In the light of the connections to property-testing problems, 
it is especially natural to attempt to quantify stability for 
finite sets of relations. 
Alas, the generalised B.H. Neumann groups are typically not finitely presented. 
For those groups in the family for which the epimorphism to $C_3 \wr \mathbb{Z}$ 
from Subsection \ref{LampSubsect} is defined, 
this follows immediately from the main result of \cite{GenTes}, 
which implies that no finitely presented amenable group admits an 
epimorphism to a wreath product $A \wr B$, with $B$ infinite and $A$ nontrivial. 
More generally, any finitely generated group for which the full residual 
finiteness growth is not eventually equal to the LEF growth is not 
finitely presented (Proposition 2.18 of \cite{Brad}). 
As such, the following remains open, 
where we refer to the group $\Gamma$ as \emph{polynomially stable} 
if $F_{\Gamma} ^{\pi} (x)$ is bounded above by a polynomial function of $x$ 
(see also Problem II.16 of \cite{BecLubMos}). 

\begin{qu}
    Does there exist a finitely \emph{presented} group $\Gamma$ which is stable 
    but not polynomially stable? 
\end{qu}

Similarly, one wishes to have more examples of polynomially stable groups. 
In this vein, we amplify and generalise a suggestion from Section 6 of \cite{BecMos}. 

\begin{qu} \label{NilpQu}
    Is every finitely generated nilpotent group polynomially stable? 
    What about every polycyclic group? 
\end{qu}

A positive answer to (either part of) Question \ref{NilpQu} 
would also yield a positive answer to Problem 2 of \cite{BecLubMosI}, 
which asks for a first example of a nonabelian polynomially stable group. 
Note that all finitely generated nilpotent groups are finitely presented 
and satisfy a polynomial Dehn function \cite{GerHolRil},  
so that, using Proposition \ref{HvsStabGrowthProp} as we did in the proof of 
Theorem \ref{BeckMoshThmIntro}, the property of being polynomially stable 
in our sense is equivalent for such groups $\Gamma=\langle X \mid R \rangle$ 
to having a stability rate 
$\SR_{\langle X \mid R \rangle}(\epsilon)$ bounded above by 
$\epsilon^{\frac{1}{\alpha}}$ for some $\alpha \geq 1$. 

Finally, recall that for a group $\Gamma$ to be stable means that the ``sample-and-substitute'' 
algorithm described in the Introduction is capable of recognising 
tuples of permutations which are close to a solution 
for the family of defining relations for $\Gamma$. 
A priori, the property that proximity to a solution 
is recognised by ``sample-and-substitute'' (stability) is stronger than 
the property of recognition by \emph{some} algorithm. 
Groups satisfying the latter, weaker property are known as \emph{testable} groups \cite{BecLubMos}. 
Not every testable group is stable: for instance every finitely generated amenable 
group is testable (Theorem 1.14 of \cite{BecLubMosI}). 
It is natural to ask of a testable group $\Gamma$ for bounds on the complexity 
of algorithms witnessing the testability of the group. 
Remarkably, there exists a ``universal'' algorithm, 
called in \cite{BecLubMos} the \emph{Local Statistics Matcher} algorithm, 
which witnesses testability for any finitely generated testable group. 
Therefore, one could define a notion of \emph{testability growth function} 
for a finitely generated testable group $\Gamma$ 
in one of two ways: either as a measure of the 
complexity of the \emph{most efficient} algorithm for recognising which 
tuples of permutations are close to a solution for the relations of $\Gamma$, 
or as a measure of the complexity of Local Statistics Matcher when applied to $\Gamma$. 
The following questions may be interpreted in either sense. 

\begin{qu}
    Does there exist a finitely generated group which is not stable but 
    which is polynomially testable? Does there exist a finitely presented example? 
\end{qu}

\begin{qu} \label{BadTestableQu}
    Does there exist a finitely generated group which is testable but not 
    polynomially testable? Does there exist a stable such group? 
    Does there exist a finitely presented such group? 
    Do there exist finitely generated groups 
    with arbitrarily large testability growth functions? 
    Do there exist such groups which are stable? 
\end{qu}

The tools that we have employed in this paper to give obstructions to effective stability 
(specifically large full residual finiteness growth) 
do not seem so relevant to Question \ref{BadTestableQu}, 
because sofic testable groups need not be residually finite 
(for instance, there exist many finitely generated amenable groups which are not 
residually finite). 
One therefore seeks other group-theoretic properties, which are obstructions to testability, 
and which may be quantified. 

\subsection*{Acknowledgments} I am grateful to Oren Becker; Alon Dogon; Francesco Fournier-Facio and Tianyi Zheng 
for enlightening conversations which helped shape the development of this paper.


\begin{thebibliography}{99}

\bibitem{ArzChe} G. Arzhantseva, P.-A. Cherix, 
{\it Quantifying metric approximations of discrete groups. }
Ann. Fac. Sci. Toulouse Math (2024), 831-877.

\bibitem{ArzPau} G. Arzhantseva, L. Paunescu, 
{\it Almost commuting permutations are near commuting permutations. }
Journal of Functional Analysis, 269, no. 3 (2015): 745-757.

\bibitem{BecLubTho} O. Becker, A. Lubotzky, A. Thom, 
{\it Stability and invariant random subgroups. }
Duke Mathematical Journal, 168, no. 3 (2019), 2207-2234. 

\bibitem{BecLubMos} O. Becker, A. Lubotzky, J. Moshieff, 
{\it Testability of relations between permutations. }
2021 IEEE 62nd annual symposium on foundations of computer science (FOCS). IEEE (2022).

\bibitem{BecLubMosI} O. Becker, A. Lubotzky, J. Moshieff, 
{\it Testability in group theory. }
Israel Journal of Mathematics, 256 (2023), 61–90. 

\bibitem{BecMos} O. Becker, J. Moshieff, 
{\it Abelian groups are polynomially stable. }
International Mathematics Research Notices, 20 (2021), 15574-15632. 

\bibitem{BowGrigKrav} L. Bowen, R. Grigorchuk, R. Kravchenko, 
{\it Invariant random subgroups of lamplighter groups. }
Israel Journal of Mathematics 207.2 (2015), 763-782.

\bibitem{BradRF} H. Bradford, 
{\it The inverse problem for residual finiteness growth. }
arXiv:2402.03556 [math.GR]

\bibitem{Brad} H. Bradford, 
{\it Quantifying local embeddings into finite groups. }
Journal of Algebra 608 (2022), 214-238.

\bibitem{BoRaSewa} K. Bou-Rabee, B. Seward, 
{\it Arbitrarily large residual finiteness growth. }
J. Reine Angwe. Math. 2016 (710), 199-204.

\bibitem{DixMor} J.D. Dixon, B. Mortimer, 
{\it Permutation groups. }
Graduate Texts in Mathematics 163, 
Springer Science \& Business Media (2012). 

\bibitem{DogLevVig} A. Dogon, A. Levit, I. Vigdorovich, 
{\it Characters of diagonal products and Hilbert-Schmidt stability. }
arXiv preprint arXiv:2407.11608 (2024).

\bibitem{GenTes} A. Genevois, R. Tessera, 
{\it A note on morphisms to wreath products. }
athematical Proceedings of the Cambridge Philosophical Society (2025). 

\bibitem{GerHolRil} S. Gersten, D.F. Holt, T.R. Riley, 
{\it Isoperimetric inequalities for nilpotent groups.}
Geometric and Functional Analysis 13.4 (2003), 795-814.

\bibitem{KasPak} M. Kassabov, I. Pak, 
{\it Groups of oscillating intermediate growth. }
Annals of Mathematics (2013), 1113-1145.

\bibitem{LevLub} A. Levit, A. Lubotzky, 
{\it Uncountably many permutation stable groups. }
Israel Journal of Mathematics 251.2 (2022), 657-678. 

\bibitem{Neum} B.H. Neumann, 
{\it Some remarks on infinite groups. }
J. London Math. Soc. 12 (1937) 120-127. 

\bibitem{Pit} C. Pittet, 
{\it Isoperimetric inequalities in nilpotent groups. }
J. London Math. Soc. 55.3 (1997): 588-600.

\end{thebibliography}
\end{document}